\newcommand{\p}{\mathfrak{p}}
\newcommand{\cO}{\mathcal{O}}
\newcommand{\cK}{\mathcal{K}}
\newcommand{\cH}{\mathcal{H}}
\newcommand{\F}{\mathbb F}
\newcommand{\N}{\mathbb N}
\newcommand{\Q}{\mathbb Q}
\newcommand{\Z}{\mathbb Z}
\newcommand{\rInj}{\mathrm{Inj}}
\newcommand{\rsoc}{\mathrm{soc}}
\newcommand{\ra}{\rightarrow}
\newcommand{\im}{\mathrm{Im}}
\newcommand{\End}{\mathrm{End}}
\newcommand{\Hom}{\mathrm{Hom}}
\newcommand{\GL}{\mathrm{GL}}
\newcommand{\Gal}{\mathrm{Gal}}
\newcommand{\id}{\mathrm{Id}}
\newcommand{\bFp}{\overline{\F}_p}
\newcommand{\bQp}{\overline{\Q}_p}
\newcommand{\rad}{\mathrm{rad}}
\newcommand{\Rep}{\underline{\mathrm{Rep}}}
\newcommand{\Ind}{\mathrm {Ind}}
\providecommand{\cInd}{\mathrm{c}\textrm{-}\mathrm{Ind}}
\newcommand{\xto}[1][]{\xrightarrow{#1}}
\newcommand{\simto}{
\xto[\sim]} 
\newcommand{\summ}{\displaystyle\sum\limits}
\newcommand{\matr}[4]{\begin{pmatrix}{#1}&{#2}\\ {#3}&{#4}\end{pmatrix}}
\newcommand{\smatr}[4]{\bigl(\begin{smallmatrix} {#1}& {#2}\\ {#3}&{#4}\end{smallmatrix}\bigl)}
\newcommand{\DIAG}{\mathcal{DIAG}}
\newcommand{\vv}{\vspace{2mm}}
\newtheorem{thm}{Th\'eor\`eme}[section]
\newtheorem{lem}[thm]{Lemme}
\newtheorem{cor}[thm]{Corollaire}
\newtheorem{prop}[thm]{Proposition}
\newtheorem{defn}[thm]{D\'efinition}
\newtheorem{rem}[thm]{Remarque}
\newtheorem{exem}[thm]{Exemple}
\begin{document}

\title{Diagrammes canoniques et repr\'esentations modulo $p$ de $\GL_2(F)$
}

\author{Yongquan HU}
\date{}

\maketitle
\textbf{Abstract} --
Let $p$ be a prime number and $F$ a non-Archimedean local
field with residual characteristic $p$. In this article, to an
irreducible smooth representation of $\GL_2(F)$ over $\bFp$ with
central character, we associate a diagram which determines the
original representation up to isomorphism. We also explicitly determine it in some cases.

\tableofcontents

\section{Introduction}\label{section-Hecke}\label{notations-1}

Soient $p$ un nombre premier et $F$ un corps local complet pour une
valuation discr\`ete de corps r\'esiduel fini de caract\'eristique $p$. Fixons un corps alg\'ebriquement clos de caract\'eristique $p$ not\'e $\bFp$.
L'\'etude des repr\'esentations lisses irr\'eductibles avec caract\`ere
central de $\GL_2(F)$ sur $\bFp$ a \'et\'e initi\'ee par Barthel et
Livn\'e (\cite{BL1}, \cite{BL2}). Ils ont class\'e ces repr\'esentations en quatre ``cat\'egories'' (les \emph{caract\`eres}, les
\emph{s\'eries principales}, les \emph{s\'eries sp\'eciales} et les
\emph{supersinguli\`eres}) et ont compl\`etement \'etudi\'e la structure des trois premi\`eres. Dans \cite{Br1}, Breuil a d\'etermin\'e les
supersinguli\`eres dans le cas particulier (mais important) o\`u
$F=\Q_p$, ce qui lui a permis
 de d\'efinir une bijection entre les classes d'isomorphisme de
  repr\'esentations supersinguli\`eres de
$\GL_2(\Q_p)$ et les classes d'isomorphisme de repr\'esentations
continues irr\'eductibles de dimension 2 de $\Gal(\bQp/\Q_p)$ sur
$\bFp$. Mais, lorsque $F\neq \Q_p$, au moins lorsque $F$ est une
extension finie non ramifi\'ee de $\Q_p$ de degr\'e $\geq 2$, il
existe une tr\`es grande quantit\'e de repr\'esentations lisses
admissibles supersinguli\`eres (\cite{Pa}, \cite{BP}, \cite{Hu}).



Une grosse diff\'erence entre la th\'eorie des repr\'esentations modulo $p$ de $\GL_2(F)$ et la th\'eorie classique (i.e. en caract\'eristique $0$) est la non existence de mesure de Harr non triviale sur $\GL_2(F)$ \`a valeurs dans $\bFp$, ce qui fait que beaucoup d'outils classiques ne sont plus applicables. Dans \cite{Pa}, Pa\v{s}k\={u}nas a donn\'e une construction tr\`es g\'en\'erale de repr\'esentations lisses de $\GL_2(F)$ \`a partir d'objets appel\'es ``diagrammes''.
Dans cet article, \`a   une repr\'esentation lisse irr\'eductible de $\GL_2(F)$ avec caract\`ere central, on associe un diagramme canonique qui d\'etermine la classe d'isomorphisme de la repr\'esentation de d\'epart.
\vspace{1mm}

Fixons quelques notations pour \'enoncer notre r\'esultats. 
Notons $\cO$
l'anneau des entiers de $F$, $\varpi$ une uniformisante de $\cO$ (fix\'ee une fois pour toutes),
 \[G=\GL_2(F),\ \ K=\GL_2(\cO),\]
$Z$ le centre de $G$, $I\subset K$ le sous-groupe des matrices triangulaires sup\'erieures
modulo $\varpi$ et $N$ le normalisateur de $I$ dans $G$. Par d\'efinition (\cite{Pa}),  un \emph{diagramme} est
un triplet $(D_0,D_1,r)$ o\`u $D_0$ est une
repr\'esentation lisse de $KZ$, $D_1$ est une repr\'esentation lisse
de $N$ et $r: D_1\ra D_0$ est un morphisme $IZ$-\'equivariant.  Les diagrammes forment une cat\'egorie et l'on dispose d'un foncteur $H_0$ de cette cat\'egorie dans celle des repr\'esentations lisses de $G$. Inversement, si $\pi$ est une $\bFp$-repr\'esentation lisse irr\'eductible de $G$ avec  caract\`ere central, on peut  lui associer un diagramme, appel\'e  le \emph{diagramme canonique} de $\pi$ (\S\ref{section-diag-canonique}):
\[D(\pi)=(D_0(\pi),D_1(\pi),\mathrm{can})\]
o\`u $D_1(\pi)$ est une certaine sous-$N$-repr\'esentation de $\pi$ \`a pr\'eciser,  $D_0(\pi)=\langle KZ\cdot D_1(\pi)\rangle$ est la sous-$KZ$-repr\'esentation de $\pi$
engendr\'ee par $D_1(\pi)$ et $\mathrm{can}$ d\'esigne l'inclusion naturelle $D_1(\pi)\hookrightarrow D_1(\pi)$.

Les r\'esultats centraux de cet article sont les suivants.
\begin{thm}
(corollaire \ref{cor-H0(D)}) On a un isomorphisme naturel de $G$-repr\'esentations 
\[H_0(D(\pi))\cong\pi.\]
\end{thm}


Dans certains cas, on peut d\'eterminer explicitement le diagramme
canonique. 
Notons $I_1\subset I$ le sous-groupe des matrices
unipotentes sup\'erieures modulo $\varpi$. 

\begin{thm}\label{theorem-intro-expli} (proposition \ref{prop-I1-invariant-in-D1} et th\'eor\`emes \ref{theorem-non-super}, \ref{theorem-Breuil-Paskunas} et \ref{theorem-main})
Soit $\pi$ une repr\'esentation lisse irr\'eductible de $G$ avec
caract\`ere central.

(i) On a toujours $\pi^{I_1}\subseteq D_1(\pi)$.

(ii) On a $D_1(\pi)=\pi^{I_1}$ si $\pi$ est non supersinguli\`ere ou si $F=\Q_p$.

(iii) On a $D_1(\pi)=\pi$ si $F$ est de caract\'eristique $p$ et si $\pi$ est supersinguli\`ere.
\end{thm}

Mais malheureusement il ne semble pas facile de calculer $D(\pi)$
dans le cas o\`u $F$ est une extension finie de $\Q_p$ et $\pi$ est supersinguli\`ere, m\^eme de d\'emontrer si $D_1(\pi)$ est ou
non de dimension finie sur $\bFp$. N\'eanmoins, on peut clarifier le lien entre
certaines propri\'et\'es de $D(\pi)$ et certaines propri\'et\'es de $\pi$, qui sont valables quelle que soit la repr\'esentation irr\'eductible $\pi$ avec caract\`ere central, quel que soit le corps $F$.

\begin{thm} (th\'eor\`eme \ref{theorem-Pe}, corollaires \ref{coro-presfinie-equv} et \ref{coro-finitude-->adm}) \label{theorem-intro-proporiete}
Soit $\pi$ une repr\'esentation lisse irr\'eductible de $G$ avec
caract\`ere central. Alors les trois conditions suivantes sont
\'equivalentes:

(i) $D_1(\pi)$ est de dimension finie;

(ii) $\pi$ admet une pr\'esentation finie (voir \S\ref{subsection-D1(sigma,pi)});

(iii) $\pi$ admet une pr\'esentation standard (au sens de \cite{Co}).\vspace{1mm}

\noindent Si l'une de ces conditions est v\'erifi\'ee, alors\vspace{1mm}

(iv) $\pi$ est admissible;

(v) l'espace $I^+(\pi)^{\smatr{1}{\cO}01}$ est de dimension
finie.
\end{thm}

Remarquons qu'il est int\'eressant d'obtenir des pr\'esentations finies pour les repr\'esentations lisses irr\'eductibles de $G$ en raison des travaux de Colmez \cite{Co}, Schneider-Vign\'eras \cite{SV} et Vign\'eras \cite{Vi2}.

Soulignons les cons\'equences suivantes des th\'eor\`emes \ref{theorem-intro-expli}(iii) et \ref{theorem-intro-proporiete} dans le cas o\`u $F$ est de caract\'eristique $p$.
\begin{cor} (corollaire \ref{coro-non-presfinie} et th\'eor\`emes \ref{theorem-irr} et \ref{theorem-morph})
\label{cor-intro}
Si $F$ est de caract\'eristique $p$ et si $\pi$ est une repr\'esentation supersinguli\`ere de $G$, alors

(i) $\pi$ n'est pas de pr\'esentation finie;

(ii) $\pi|_{P^+}$ est irr\'eductible;

(iii) pour toute repr\'esentation lisse $\pi'$ de $G$, on a
\[\Hom_{P^+Z}(\pi,\pi')\cong\Hom_G(\pi,\pi').\]
\end{cor}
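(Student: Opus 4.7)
Le plan repose enti\`erement sur le Th\'eor\`eme \ref{theorem-intro-expli}(iii), qui fournit l'\'egalit\'e d\'eterminante $D_1(\pi)=\pi$ lorsque $F$ est de caract\'eristique $p$ et $\pi$ est supersinguli\`ere. C'est cette \'egalit\'e, conjugu\'ee au Th\'eor\`eme \ref{theorem-intro-proporiete} et au corollaire \ref{cor-H0(D)} ($\pi\cong H_0(D(\pi))$), qui va fournir chacun des trois \'enonc\'es.

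Pour (i), l'argument est direct. Toute repr\'esentation supersinguli\`ere de $G$ est de dimension infinie sur $\bFp$ (cela r\'esulte de la classification de Barthel-Livn\'e~\cite{BL2}, puisqu'une telle $\pi$ n'est ni un caract\`ere ni un sous-quotient d'une induite parabolique). L'\'egalit\'e $D_1(\pi)=\pi$ entra\^\i ne donc que $D_1(\pi)$ est de dimension infinie, ce qui par l'\'equivalence (i)$\Leftrightarrow$(ii) du Th\'eor\`eme \ref{theorem-intro-proporiete} interdit \`a $\pi$ d'admettre une pr\'esentation finie.

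Pour (ii), je proc\'ederais ainsi. Soit $V\subseteq \pi$ une sous-$P^+$-repr\'esentation non nulle. Le groupe $I_1\cap P^+$ \'etant un sous-groupe pro-$p$ agissant sur $V$, celui-ci contient un vecteur non nul $(I_1\cap P^+)$-invariant. En it\'erant l'action des \'el\'ements diagonaux positifs (notamment $\smatr{\varpi}{0}{0}{1}\in P^+$) et les translations par $I_1$, on produit dans $V$ un sous-espace $IZ$-stable significatif. L'\'egalit\'e $D_1(\pi)=\pi$ intervient alors de mani\`ere d\'ecisive: puisque $\pi$ tout entier co\"\i ncide avec la sous-$N$-repr\'esentation canonique, la structure de $N$-repr\'esentation est d\'etermin\'ee par celle de $IZ$ et de l'\'el\'ement $\smatr{0}{1}{\varpi}{0}$; il s'agira de montrer que la pr\'esence dans $V$ des vecteurs produits par l'action de $P^+$ force la stabilit\'e sous $\smatr{0}{1}{\varpi}{0}$, donc sous $N$, puis sous $KZ$ via $\pi=\langle KZ\cdot D_1(\pi)\rangle$, d'o\`u $V=\pi$ par irr\'eductibilit\'e $G$-\'equivariante.

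Pour (iii), l'injectivit\'e de la restriction $\Hom_G(\pi,\pi')\to\Hom_{P^+Z}(\pi,\pi')$ d\'ecoule imm\'ediatement de (ii): un $G$-morphisme dont la restriction \`a $P^+Z$ est nulle a son noyau $P^+$-stable non trivial, donc \'egal \`a $\pi$. La surjectivit\'e se ram\`ene, puisque $G$ est engendr\'e par $P^+Z$ et par l'\'el\'ement $\smatr{0}{1}{\varpi}{0}$, \`a prouver que toute $f\in\Hom_{P^+Z}(\pi,\pi')$ commute \`a $\smatr{0}{1}{\varpi}{0}$. C'est l'obstacle principal: rendre rigoureuse l'id\'ee selon laquelle, puisque $D_1(\pi)=\pi$ est \`a la fois la $N$-repr\'esentation canonique et la $KZ$-repr\'esentation sous-jacente $D_0(\pi)=\pi$, les deux \'etant compatibles sur $IZ$, l'action de $\smatr{0}{1}{\varpi}{0}$ est enti\`erement d\'etermin\'ee par celle de $P^+Z$. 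Je pr\'evois de mener cette v\'erification \`a l'aide d'une analyse des vecteurs $I_1$-invariants et des op\'erateurs de Hecke classiques, en exploitant leur annulation sur une supersinguli\`ere pour produire les relations explicites n\'ecessaires \`a la promotion de $f$ en un morphisme $G$-\'equivariant.
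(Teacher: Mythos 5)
Votre argument pour (i) est correct et co\"{i}ncide avec celui du papier (corollaire \ref{coro-non-presfinie}). Pour (ii) et (iii) en revanche, votre proposition identifie le bon point de d\'epart --- l'\'egalit\'e $D_1(\pi)=\pi$ du th\'eor\`eme \ref{theorem-main} --- mais ne fournit pas la m\'ecanique qui permet d'en tirer les \'enonc\'es; aux endroits cruciaux vous \'ecrivez vous-m\^eme ``il s'agira de montrer'' et ``je pr\'evois de mener cette v\'erification'', sans la mener.

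Pour (ii), il ne suffit pas d'invoquer l'existence de $(I_1\cap P^+)$-invariants dans $V$ et d'affirmer que la stabilit\'e sous $P^+$ ``force'' celle sous $\smatr01{\varpi}0$. L'ingr\'edient manquant est le couple form\'e par le lemme \ref{lemma-S^m=0-pas} et \cite[lemme 3.4]{Pa3}: apr\`es avoir produit $v\in\langle P^+\cdot x\rangle\cap\pi^{I_1}$ non nul via le lemme \ref{lemma-P+-irred}, on peut supposer $Sv=0$, et c'est \emph{cette} annulation qui, via le lemme de Pa\v{s}k\={u}nas, donne $\smatr0110 v\in\langle P^+\cdot v\rangle$, puis $\langle K\cdot v\rangle\subset\langle P^+\cdot v\rangle$; c'est seulement alors que $D_1(\pi)=\pi$ intervient, sous la forme $\pi=I^+(\pi)=\langle P^+\cdot\sigma\rangle$ pour $\sigma\subset\langle K\cdot v\rangle$ irr\'eductible, et ach\`eve la preuve. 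Pour (iii), les ingr\'edients analogues font d\'efaut: l'identit\'e $s\cdot S=\Pi+R$ dans $\bFp[G]$ avec $R\in\bFp[P^+I]$, la r\'ecurrence sur le plus petit $m$ tel que $S^mx=0$ (licite pour tout $x\in\pi$ gr\^ace au th\'eor\`eme \ref{theorem-main}(i) et \`a l'\'egalit\'e $I^+(\pi)=\pi$), et le lemme \ref{lemma-Pa->pi'} qui fournit $v\in\pi^{I_1}$ avec $\phi(v)\in\pi'^{I_1}$. Ce sont ces trois \'el\'ements concrets, et non une ``analyse des op\'erateurs de Hecke classiques'' laiss\'ee \`a l'\'etat d'annonce, qui ferment l'argument.
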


Pa\v{s}k\={u}nas (\cite{Pa3}) a montr\'e les analogues de (ii) et de (iii)  du corollaire \ref{cor-intro} sans restriction sur $F$ mais en rempla\c{c}ant $P^+$ par $P$, le sous-groupe de Borel des matrices triangulaires sup\'erieures de $G$. Par ailleurs, apr\`es la premi\`ere r\'edaction de cet article, il m'a signal\'e une preuve de (iii) qui est plus conceptuelle et qui est valable pour \emph{tout} $F$. On la pr\'esentera apr\`es notre preuve originale. 
 \vv


Introduisons maintenant les principales (autres) notations de cet article.
\vspace{1mm} \label{notations-2}

Notons $\p:=\varpi\cO$ l'id\'eal maximal de $\cO$ et $q$ le cardinal
du corps r\'esiduel $\cO/\p$.
On identifie $\cO/\p$ avec $\F_q$ et
on note $[\lambda]$ le repr\'esentant multiplicatif dans $\cO$ de
$\lambda\in\F_q$. 
Si $x\in\cO$, on note $\overline{x}\in \F_q$ sa r\'eduction modulo $\varpi$. On note $\mathrm{val}_F$ la valuation sur $F$ normalis\'ee par $\mathrm{val}_F(\varpi)=1$. \vv

On  d\'esigne par $P^+$  le sous-mono\"ide de $G$ d\'efini par
\[ P^+:=\matr{\cO-\{0\}}{\cO}01.\]
Pour $n\geq 1$, on note
\[K_n:=\matr{1+\p^n}{\p^n}{\p^n}{1+\p^n},\ \  I_n:=\matr{1+\p^n}{\p^{n-1}}{\p^n}{1+\p^{n}}.\]
Remarquons que $K$ normalise $K_n$ et $N$ normalise $I_n$.
On note $U^+=\smatr 1F01$ (resp. $U^-=\smatr10F1$) le sous-groupe
des matrices unipotentes sup\'erieures (resp. inf\'erieures) de telle sorte que $I_1\cap U^+=\smatr1{\cO}01$ (resp. $I_1\cap U^-=\smatr10{\p}1$). On note
$\cH\subset I$ le sous-groupe des matrices de la forme
$\smatr{[\lambda]}00{[\mu]}$ avec $\lambda$, $\mu\in\F_q^{\times}$.
Enfin, on d\'esigne par $\Pi$ (resp. $s$) la matrice    $\smatr01{\varpi}0$ (resp. $\smatr0110$) de $G$. Notons que $N$ est
engendr\'e par $I$ et ${\mathrm{\Pi}}$.

\vv

Toutes les repr\'esentations consid\'er\'ees dans cet article sont sur
des $\bFp$-espaces vectoriels. Pour celles de $G$, on suppose
qu'elles admettent un caract\`ere central, et on notera $\chi_{\pi}$ le caract\`ere central si $\pi$ est une telle repr\'esentation.\vv

Une repr\'esentation $M$ d'un groupe localement profini $H$ est dite \emph{lisse} si chaque vecteur de $M$ est fix\'e par un
sous-groupe ouvert de $H$. Elle est dite \emph{admissible} si $M^{H_1}$, l'espace des $H_1$-invariants de $M$, est de dimension finie sur $\bFp$ pour tout sous-groupe ouvert $H_1$ de $H$, ou de mani\`ere \'equivalente, pour un seul pro-$p$-sous-groupe ouvert $H_1$ (voir, par exemple, \cite[th\'eor\`eme 6.3.2]{Pa}).  On d\'esigne par $\Rep_G$
(resp. $\Rep_K$, $\Rep_I$, etc.) la cat\'egorie des repr\'esentations
lisses de $G$ (resp. $K$, $I$, etc.) sur $\bFp$. 


\vv

Nous utiliserons le r\'esultat classique suivant, qui est essentiel \`a la th\'eorie des repr\'esentations \emph{lisses} modulo $p$ de $G$. Si $M$ est une repr\'esentation lisse d'un
\emph{pro-$p$-groupe} $H$ (e.g. $H=I_1$ ou $I_1\cap U^+$), alors
$M^H$ est non trivial (voir par exemple \cite[lemme 3(1)]{BL2}). Remarquons que si $\sigma$ est une repr\'esentation irr\'eductible de $K$ sur $\bFp$, alors $\sigma^{I_1}$ est de dimension 1.

\vv

Si $M$ est une repr\'esentation d'un groupe ou d'un mono\"{i}de $H$, et si $v\in M$ est un vecteur, on notera $\langle H\cdot v\rangle$  le sous-espace vectoriel de $M$ engendr\'e par $v$ sous l'action de $H$.\vv

Si $M$ est une repr\'esentation lisse d'un groupe profini $H$ (e.g. $H=K$ ou $I$), on d\'efinit le socle de $M$, et on le note $\rsoc_H(M)$, comme la plus grande sous-repr\'esentation semi-simple
de $M$. De m\^eme, on d\'efinit le radical de $M$, et on le note $\rad_H(M)$,
comme la plus petite sous-repr\'esentation de $M$ telle que le
quotient $M/\rad_H(M)$ soit semi-simple. Notons que l'on a toujours $\rad_H(M)\subsetneq M$ par \cite[\S1, proposition 4]{Al}.\vv

Enfin, si $H$ est un sous-groupe ferm\'e de $G$ et si $M$ est une repr\'esentation lisse de $H$ (sur $\bFp$), on note $\cInd_{H}^{G}M$ le $\bFp$-espace vectoriel des fonctions $f:G\ra M$ qui sont localement constantes, \`a support compact modulo $H$, et telles que $f(hg)=h\cdot f(g)$ ($h\in H$, $g\in G$). Il est muni de l'action \`a gauche de $G$ donn\'ee par $(g'\cdot f)(g):=f(gg')$ ($g,g'\in G$). On obtient ainsi une repr\'esentation lisse de $G$. Pour $g\in G$ et $v\in M$, on d\'esigne par $[g,v]$
l'\'el\'ement de $\cInd_{H}^GM$
de support $Hg^{-1}$ et de valeur $v$ en $g^{-1}$. Cela co\"{i}ncide avec la d\'efinition pr\'ec\'edente de $\cInd_{KZ}^G\sigma$. Lorsque $H=P$, on \'ecrira $\Ind_P^G$ au lieu de $\cInd_P^G$. On d\'efinit l'induction $\Ind_I^K$ de mani\`ere analogue. Remarquons que $\cInd_H^G$ est un foncteur exacte de la cat\'egorie des repr\'esentations lisses de $H$ dans la cat\'egorie des repr\'esentations lisses de $G$ (voir par exemple \cite[proposition 4.1.5]{Em} dans le cas $H=P$).
On a la m\^eme chose pour le foncteur $\Ind_I^K$.
\vv \vv


\textbf{Remerciement.} Ce travail s'est accompli sous la direction
de C. Breuil. Je le remercie chaleureusement pour avoir partag\'e
avec moi ses id\'ees et ses connaissances et pour toutes ses
remarques. 
Je remercie sinc\`erement V. Pa\v{s}k\={u}nas pour des
discussions constructives, pour m'avoir envoy\'e l'article
\cite{Pa2}, et particuli\`erement
pour m'avoir permis de pr\'esenter ici sa preuve du th\'eor\`eme \ref{theorem-morph} pour $F$ g\'en\'eral. Je remercie M.-F. Vign\'eras pour les discussions que nous avons eues ensemble et l'int\'er\^et qu'elle a port\'e \`a ce travail.
Je remercie R. Abddellatif et B. Schraen pour leur commentaires et suggestions \`a la premi\`ere version. Enfin, je remercie vivement le referee pour ses nombreuses remarques et suggestions constructives. 

\section{Rappels et compl\'ements}

Ce chapitre   de pr\'eliminaires commence par quelques rappels pour lesquels on se r\'ef\`ere principalement \`a \cite{BL2} et  \`a \cite{Br1}.


\subsection{Rappels sur des d\'ecompositions de $G$}

On rassemble des d\'ecompositions classiques de  $G$ et de  $K$  qui seront utiles au long de l'article.
Commen\c{c}ons par la d\'ecomposition de Cartan:
\begin{equation}
 \begin{array}{rll}G&=&\coprod_{n\geq 0}KZ\matr{\varpi^n}001KZ\\ 
&=&
\Bigl(\coprod_{n\geq 0}IZ\matr{\varpi^n}001KZ\Bigr)\coprod\Bigl(\coprod_{n\geq 0}IZ\Pi\matr{\varpi^n}001KZ\Bigr)\label{equation-Cartan}.
\end{array}
 \end{equation}
D'apr\`es la d\'efinition de $P^+$ et l'\'equation $\smatr{\varpi^na}{b}01=\smatr{\varpi^n}{b}01\smatr{a}001$ si $a\in\cO^{\times}$ et $b\in\cO$, on trouve que \begin{equation}\label{equation-P+=union}P^+KZ=\coprod_{n\geq 0}\matr{\varpi^n}{\cO}01KZ.\end{equation}
Par cons\'equent, en utilisant le lemme \ref{lemma-IP=PI} ci-apr\`es, (\ref{equation-Cartan}) se r\'e\'ecrit sous la forme (voir aussi \cite[lemme 11]{Vi2}):
\begin{equation}\label{equation-vigneras}G=P^+KZ\coprod \Pi P^+KZ.\end{equation}

\begin{lem}\label{lemma-IP=PI}
On a les \'egalit\'es 
 $$IZ\matr{\varpi^n}{\cO}01=\matr{\varpi^n}{\cO}01IZ=IZ\matr{\varpi^{n}}001IZ.$$ 
\end{lem}
\begin{proof}
Voir la preuve de \cite[lemme 4.6]{Pa2} pour la premi\`ere \'egalit\'e. La deuxi\`eme s'en d\'eduit facilement. 
\end{proof}

Par ailleurs, la d\'ecomposition (\ref{equation-P+=union}) implique que tout \'el\'ement $g$ de  $P^+KZ$ s'\'ecrit de mani\`ere unique sous la forme:
\begin{equation} \label{equation-g=g^i-vigneras}
g=g^{(n)}g^{(n-1)}\cdots g^{(1)}k\end{equation}
o\`u $k\in KZ$, $g^{(i)}$ \'egale \`a
l'une des matrices $g_{\lambda}:=\smatr{\varpi}{[\lambda]}01$ avec $\lambda\in\F_q$, $n\geq 0$ et on convient que $n:=0$ si $g\in KZ$. On appelle $\ell(g):=n$ la \emph{longueur} de $g$. Pour $g\in \Pi P^+KZ$, on \'ecrit $g=\Pi g^+$ avec $g^+\in P^+KZ$ et on appelle $\ell(g):=\ell(g^+)+1$ la longueur de $g$. Remarquons que $\ell(g)=0$ si et seulement si $g\in KZ$.\vv

Enfin, on rappelle la d\'ecomposition d'Iwahori:
\begin{equation}\label{equation-Iwahori}I_1=\matr1{\cO}01\matr{1+\p}00{1+\p}\matr10{\p}1\end{equation}
et la d\'ecomposition suivante de $K$:
 \begin{equation}\label{equation-decom-K/I}
 K=I\coprod\Bigl(\coprod_{\lambda\in\F_q}\matr{[\lambda]}110I\Bigr).
 \end{equation}

\subsection{Poids et alg\`ebre de Hecke relative \`a un poids}\label{subsection-Hecke}

\subsubsection{L'op\'erateurs $T$}\label{subsubsection-T}

D'apr\`es \cite[proposition 4]{BL2} et la remarque qui suit, toute repr\'esentation irr\'eductible lisse de $KZ$ sur $\bFp$ est triviale
sur $K_1$ et admet un caract\`ere central.
On appellera  \emph{poids} une telle repr\'esentation en modifiant la d\'efinition originale de \cite{BDJ}, o\`u un poids d\'esigne une repr\'esentation irr\'eductible lisse de $K$. On peut classifier les poids \`a isomorphisme pr\`es (voir par exemple \cite[proposition 1]{BL2}).\vv



Fixons $\sigma$ un poids. Dans $\cInd_{KZ}^G\sigma$, on d\'efinit pour $n\geq 0$:
\[R_n^+(\sigma)=\Bigl[\matr{\varpi^n}{\cO}01,\sigma\Bigr],\ \ \ R_n^-(\sigma)={\mathrm{\Pi}}\cdot R_n^+(\sigma).\]
En particulier, $R_0^+(\sigma)=[\mathrm{Id},\sigma]$ o\`u $\mathrm{Id}$ d\'esigne la matrice identit\'e de $G$. Posons \'egalement $$R_0(\sigma):=R_0^+(\sigma),\ \
R_n(\sigma):=R_n^+(\sigma)\oplus R_{n-1}^-(\sigma) \  {\rm si} \  n\geq 1.$$
Par le lemme \ref{lemma-IP=PI}, $R_n^+(\sigma)$ et $R_n^-({\sigma})$ sont stables par $IZ$ et $R_n(\sigma)$ par $KZ$. De la d\'ecomposition de Cartan (\ref{equation-Cartan}), on d\'eduit que
\[\cInd_{KZ}^G\sigma|_{KZ}=\bigoplus_{n\geq 0}R_n(\sigma)\]
et
\[\cInd_{KZ}^G\sigma|_{IZ}=\Bigl(\bigoplus_{n\geq0}R_n^+(\sigma)\Bigr)\bigoplus\Bigl(\bigoplus_{n\geq 0}R_n^-(\sigma)\Bigr).\]
On a le lemme suivant:


\begin{lem}\label{lemma-decomposition}
  Pour $n\geq 1$, $R_n(\sigma)$ est la sous-$KZ$-repr\'esentation de $\cInd_{KZ}^G\sigma$ engendr\'ee par $R_{n-1}^-(\sigma)$ et
\begin{equation}\label{equation-R_n(sigma)=summ}R_{n}^+(\sigma)= \bigoplus_{\lambda\in\F_q}\matr{[\lambda]}110R_{n-1}^-(\sigma).\end{equation}

\end{lem}
\begin{proof}
 Tout $x\in\cO$ s'\'ecrit de mani\`ere unique sous la forme $x=[{\lambda}_x]+\varpi x'$ avec $\lambda_x\in\F_q$ et $x'\in\cO$ d\'ependant de $x$, de telle sorte que
\[\matr{\varpi^n}x01=\matr{\varpi}{[\lambda_x]}01\matr{\varpi^{n-1}}{x'}01=\matr{[\lambda_x]}110\Pi\matr{\varpi^{n-1}}{x'}01.\]
Le r\'esultat s'en d\'eduit.
\end{proof}

%

L'alg\`ebre de Hecke $\cH(KZ,\sigma)$ (relativement \`a $KZ$ et \`a $\sigma$) est par
d\'efinition l'alg\`ebre des $\bFp$-endomorphismes de $\cInd_{KZ}^G\sigma$ qui commutent \`a l'action de $G$. Par r\'eciprocit\'e de
Frobenius, elle s'identifie \`a l'alg\`ebre de convolution des fonctions
$\varphi:G\ra \End_{\bFp}(\sigma)$ \`a support compact modulo
$Z$ telles que $\varphi(k_1gk_2)=\sigma(k_1)\circ\varphi(g)\circ\sigma(k_2)$
pour $k_1,k_2\in KZ$ et $g\in G$. Si $\varphi$ est une telle fonction et
$T$ l'endomorphisme correspondant de $\cInd_{KZ}^G\sigma$, alors on a la formule (\cite[\S2.4, (3)]{Br1}):
\[T([g,v])=\summ_{g'KZ\in G/KZ}[gg',\varphi(g'^{-1})(v)].\]
Soit $\varphi: G\ra \End_{\bFp}(\sigma)$ l'unique fonction \`a support dans $KZ\smatr{1}{0}{0}{\varpi^{-1}}KZ$ v\'erifiant $\varphi(k_1gk_2)=\sigma(k_1)\circ\varphi(g)\circ\sigma(k_2)$ comme pr\'ec\'edemment  et telle que $\varphi(\smatr{1}{0}{0}{\varpi^{-1}})=U_{r_1}\otimes \cdots \otimes U_{r_{f}}$   (voir \cite[\S3.1]{BL2} ou \cite[\S2.7]{Br1} pour cette notation o\`u les $r_i, 1\leq i\leq f$ sont des entiers associ\'es \`a $\sigma$), et soit $T$ l'op\'erateur
de Hecke correspondant. D'apr\`es
\cite[proposition 8]{BL2}, $\cH(KZ,\sigma)$
est isomorphe \`a l'alg\`ebre des polyn\^omes $\bFp[T]$.

L'action de $T$ se d\'ecrit explicitement  par le lemme suivant.  Soit $v_0\in \sigma$ un vecteur non nul fix\'e par $I_1$.
\begin{lem}\label{lemma-Sv=Tv}
(i) Si $\sigma$ est de dimension 1 (sur $\bFp$), alors
\[T([1,v_0])=[\Pi,v_0]+\summ_{\lambda\in\F_q}\Bigl[\matr{\varpi}{[\lambda]}01,v_0\Bigr].\]

(ii) Si $\sigma$ est de dimension $\geq 2$, alors
\[T([1,v_0])=\summ_{\lambda\in\F_q}\Bigl[\matr{\varpi}{[\lambda]}01,v_0\Bigr].\]
\end{lem}
\begin{proof}
Il d\'ecoule de la formule \cite[\S2.5, (7)]{Br1}, du choix de $T$ que l'on a fait ci-dessus. 
\end{proof}




Si $n\geq 1$, on v\'erifie que $T(R_n^-(\sigma))\subseteq R_{n+1}^-(\sigma)\oplus
R_{n-1}^-(\sigma)$ et que l'op\'erateur $T|_{R_n^-(\sigma)}:R_n^-(\sigma)\ra R_{n+1}^-(\sigma)\oplus
R_{n-1}^-(\sigma)$ est la somme d'une injection $IZ$-\'equivariante
$T^{+}|_{R_n^-(\sigma)}:R_n^-(\sigma)\hookrightarrow R_{n+1}^-(\sigma)$ et d'une
surjection $IZ$-\'equivariante
$T^{-}|_{R_n^-(\sigma)}:R_n^-(\sigma)\twoheadrightarrow R_{n-1}^-(\sigma)$. Explicitement, ils correspondent respectivement aux deux termes de droite de la formule \cite[\S2.5,  (6)]{Br1}. 

\begin{lem}\label{lemma-P(T)}
Soient $k\geq 0$, $f\in \oplus_{n\geq k}R_n^-(\sigma)$ et $P(T)\in
\bFp[T]$ un polyn\^ome de degr\'e $\geq 1$. Alors il existe $f'\in
\oplus_{n\geq k+1} R_n^-(\sigma)$, d\'ependant de $f$ et de $P(T)$, tel que
\[f+f'\in P(T)(\oplus_{n\geq k+1}R_n^-(\sigma)).\]
\end{lem}
\begin{proof}
Comme le degr\'e de $P(T)$ est non nul, on peut \'ecrire $P(T)=(T-\lambda)P_1(T)$
avec $\lambda\in\bFp$ et $P_1(T)\in\bFp[T]$ un polyn\^ome de degr\'e strictement inf\'erieur \`a celui de $P(T)$.
Soit $h\in \oplus_{n\geq k+1}R_n^-(\sigma)$ un
\'el\'ement tel que $T^-(h)=f$. Si $\deg P_1(T)=0$, alors
$f':=T^+(h)-\lambda h$ satisfait \`a la condition demand\'ee. Sinon,
par r\'ecurrence sur le degr\'e de $P(T)$, on peut trouver
$h',h''\in\oplus_{n\geq k+2}R_n^-(\sigma)$ tels que $h+h'=P_1(T)(h'')$, et
donc
\[\begin{array}{rll}
P(T)(h'')&=&(T-\lambda)(h+h')\\
&=&T^-(h)+T^+(h)-\lambda h+(T-\lambda)(h')\\
&=&f+\bigl(T^+(h)-\lambda h+(T-\lambda)(h')\bigr).\end{array}\]
Comme $h'\in \oplus_{n\geq k+2}R_n^-(\sigma)$ et $h\in \oplus_{n\geq k+1}R_n^-(\sigma)$, le vecteur
\[f':=T^+(h)-\lambda h+(T-\lambda)(h')\]
appartient \`a $\oplus_{n\geq
k+1}R_n^-(\sigma)$, ce qui ach\`eve la d\'emonstration.
\end{proof}

\subsubsection{Application aux quotients non triviaux de $\cInd_{KZ}^G\sigma$}

\textbf{Notation}: Si $\pi$ est un $G$-quotient \emph{non trivial} de
$\cInd_{KZ}^G\sigma$, on note $R_n(\sigma,\pi)$ (resp.
$R_n^+(\sigma,\pi)$, $R_n^-(\sigma,\pi)$) l'image de $R_n(\sigma)$ (resp.
$R_n^+(\sigma)$, $R_n^-(\sigma)$) dans $\pi$. Pour un vecteur  $f\in \cInd_{KZ}^G\sigma$, on note $\overline{f}$ l'image de $f$ dans $\pi$.

\begin{prop}\label{lemma-R_n<R_n+2}
Soient $\pi$ un $G$-quotient non trivial de $\cInd_{KZ}^G\sigma$
et $v_0\in\sigma$ un vecteur non nul fix\'e par $I_1$. Alors\vv

(i)  $\overline{[\mathrm{Id},v_0]}\in \sum_{n\geq
0}R_n^-(\sigma,\pi)$;\vspace{1mm}

(ii) $R_{0}(\sigma,\pi)\subset \sum_{n\geq 1}R_{n}(\sigma,\pi)$.
\end{prop}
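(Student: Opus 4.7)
The plan is to derive (ii) from (i) and to prove (i) using the Hecke operator $T$ together with Lemma \ref{lemma-P(T)} and the non-trivialit\'e of $\pi$.

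Once (i) is established, (ii) is essentially a corollary. Since $\sigma$ est irr\'eductible, $R_0(\sigma,\pi)=R_0^+(\sigma,\pi)$ est la sous-$KZ$-repr\'esentation de $\pi$ engendr\'ee par $\overline{[\mathrm{Id},v_0]}$. En supposant (i), ce g\'en\'erateur est contenu dans $\sum_{n\geq 0}R_n^-(\sigma,\pi)$, d'o\`u $R_0(\sigma,\pi)\subseteq\sum_{n\geq 0}KZ\cdot R_n^-(\sigma,\pi)$. Or, le lemme \ref{lemma-decomposition} affirme que $R_{n+1}(\sigma)$ est la sous-$KZ$-repr\'esentation engendr\'ee par $R_n^-(\sigma)$, ce qui donne $KZ\cdot R_n^-(\sigma,\pi)\subseteq R_{n+1}(\sigma,\pi)$ et donc $R_0(\sigma,\pi)\subseteq\sum_{n\geq 1}R_n(\sigma,\pi)$.

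Pour (i), ma strat\'egie consiste \`a calculer explicitement $T([\Pi,v_0])=\Pi T([\mathrm{Id},v_0])$ \`a partir de la commutation $T\Pi=\Pi T$, du lemme \ref{lemma-Sv=Tv} et de la relation $\Pi^2=\varpi\cdot\mathrm{Id}\in Z$. Lorsque $\dim\sigma=1$, on obtient
\[
T([\Pi,v_0])=\chi_\sigma(\varpi)[\mathrm{Id},v_0]+\sum_\lambda\Bigl[\Pi\matr{\varpi}{[\lambda]}01,v_0\Bigr]
\]
avec la somme dans $R_1^-(\sigma)$, d'o\`u, dans $\pi$, l'identit\'e $\chi_\sigma(\varpi)\overline{[\mathrm{Id},v_0]}=T\overline{[\Pi,v_0]}-\overline{\sum_\lambda[\Pi\smatr{\varpi}{[\lambda]}01,v_0]}$, ramenant le probl\`eme \`a montrer que $T\overline{[\Pi,v_0]}\in\sum_{n\geq 0}R_n^-(\sigma,\pi)$. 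Lorsque $\dim\sigma\geq 2$, $T([\Pi,v_0])$ tombe directement dans $R_1^-(\sigma)$ sans contribution $R_0^+$, et une approche plus d\'elicate est n\'ecessaire. Dans les deux cas, la non-trivialit\'e de $\pi$ est exploit\'ee via le lemme \ref{lemma-P(T)} : \`a partir d'une relation polynomiale non triviale dans $\bFp[T]$ (issue d'un \'el\'ement non nul du noyau $\cInd_{KZ}^G\sigma\twoheadrightarrow\pi$), on r\'e\'ecrit le ``terme d'erreur'' sous la forme $P(T)h-f'$ avec $f'$ \`a un niveau strictement sup\'erieur dans $\oplus R_n^-(\sigma)$, puis on it\`ere pour repousser l'obstruction \`a l'infini.

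La principale difficult\'e est de transformer la seule existence d'un noyau non trivial en une relation polynomiale effective sur le vecteur pertinent de $\pi$, en particulier dans le cas $\dim\sigma\geq 2$ o\`u $T$ pr\'eserve la d\'ecomposition $\oplus R_n^+\oplus\oplus R_n^-$ de $\cInd_{KZ}^G\sigma|_{IZ}$ et o\`u aucune composante $R_0^+$ n'appara\^it donc naturellement dans $T([\Pi,v_0])$. G\'erer la comptabilit\'e de l'it\'eration du lemme \ref{lemma-P(T)} sera l'\'etape la plus d\'elicate.
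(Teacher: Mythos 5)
Votre d\'eduction de (ii) \`a partir de (i), via le lemme \ref{lemma-decomposition} et le fait que $R_0(\sigma,\pi)$ est la $K$-repr\'esentation engendr\'ee par $\overline{[\mathrm{Id},v_0]}$, est correcte et co\"{\i}ncide avec l'argument de l'article, et la structure g\'en\'erale de votre plan pour (i) (factoriser par $\cInd_{KZ}^G\sigma/P(T)$ gr\^ace \`a la preuve de \cite[lemme 3.2]{Pa3}, \'ecrire $P(T)=(T-\lambda)P_1(T)$, produire une \'egalit\'e $(T-\lambda)(f)=[\mathrm{Id},v_0]+h$ avec $f,h\in I^-(\sigma)$, puis it\'erer le lemme \ref{lemma-P(T)}) est la bonne.

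Le probl\`eme est la difficult\'e que vous signalez dans le cas $\dim\sigma\geq2$ : c'est une lacune r\'eelle, et non un d\'etail de comptabilit\'e. Comme vous l'observez, pour $v_0\in\sigma^{I_1}$ et $\dim\sigma\geq2$ on a $\varphi(\smatr{1}{0}{0}{\varpi^{-1}})v_0=0$, donc $T([\Pi,v_0])\in R_1^-(\sigma)$ sans composante dans $R_0^+(\sigma)$ ; et comme $T(R_n^-(\sigma))\subseteq R_{n-1}^-(\sigma)\oplus R_{n+1}^-(\sigma)$ pour $n\geq1$, aucune it\'eration de $T$ \`a partir de $[\Pi,v_0]$ ne produira jamais de composante dans $R_0^+(\sigma)$. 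Le lemme \ref{lemma-P(T)} op\`ere enti\`erement dans $\oplus_{n\geq0}R_n^-(\sigma)$ et ne peut donc pas fabriquer le terme $[\mathrm{Id},v_0]$ manquant. Le rem\`ede --- et c'est exactement ici que la preuve de l'article diff\`ere de votre plan --- est d'appliquer $T-\lambda$ non pas \`a $[\Pi,v_0]$ mais \`a $f=[\smatr{1}{0}{0}{\varpi},v_0]=[\Pi,sv_0]\in R_0^-(\sigma)$, avec $s=\smatr{0}{1}{1}{0}$. Le vecteur $sv_0$ \'etant de plus bas poids dans $\sigma$, on a $\varphi(\smatr{1}{0}{0}{\varpi^{-1}})(sv_0)$ non nul et proportionnel \`a $v_0$ par construction m\^eme de $\varphi(\smatr{1}{0}{0}{\varpi^{-1}})=U_{r_1}\otimes\cdots\otimes U_{r_f}$, et le calcul de \cite[\S2.5, (8)]{Br1} donne alors, quelle que soit la dimension de $\sigma$, l'\'egalit\'e $(T-\lambda)(f)=[\mathrm{Id},v_0]+h$ avec $h\in\oplus_{n\geq 0}R_n^-(\sigma)$. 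Le reste de votre plan (it\'eration du lemme \ref{lemma-P(T)} appliqu\'e \`a $f$) s'encha\^ine alors sans modification ; sans ce remplacement du vecteur, votre argument reste incomplet pour $\dim\sigma\geq2$.
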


\begin{proof}
(i) Par la preuve de \cite[lemme 3.2]{Pa3}, on voit que la
surjection $G$-\'equivariante $\cInd_{KZ}^G\sigma\twoheadrightarrow
\pi$ se factorise par
\[\cInd_{KZ}^G\sigma\twoheadrightarrow \cInd_{KZ}^G\sigma/P(T)\twoheadrightarrow\pi\]
avec $P(T)\in\bFp[T]$ un polyn\^ome de degr\'e $\geq 1$ (remarquons que la
preuve utilise \cite[proposition 32]{BL2} qui reste vraie pour
tout $G$-quotient \emph{non trivial} de $\cInd_{KZ}^G\sigma$).
Il suffit donc de v\'erifier que
\begin{equation}\label{equation-R_n<R_n+2}[\mathrm{Id},v_0]\in P(T)(\cInd_{KZ}^G\sigma)+ \bigl(\oplus_{n\geq 0}R_n^-(\sigma)\bigr).
\end{equation}

\'Ecrivons $P(T)=(T-\lambda)P_1(T)$  avec $\lambda\in\bFp$ et $P_1(T)\in\bFp[T]$ un polyn\^ome de degr\'e strictement inf\'erieur \`a celui de $P(T)$. Posons
\[f=\Bigl[\matr{1}00{\varpi},v_0\Bigr]=\Bigl[\matr01{\varpi}0,\matr0110v_0\Bigr]\in R_0^-(\sigma).\]
Alors un calcul facile en utilisant \cite[\S2.5, (8)]{Br1} montre que:
\begin{equation}\label{equation-T-inprop}
(T-\lambda)(f)=[\mathrm{Id},v_0]+h
\end{equation} avec $h\in \oplus_{n\geq
0}R_n^-(\sigma)$ (en fait $h\in R_1^-(\sigma)$). Si le degr\'e de $P_1(T)$ est nul, i.e. $P(T)=a(T-\lambda)$ avec
$a\in\bFp^{\times}$, alors (\ref{equation-R_n<R_n+2}) est d\'ej\`a prouv\'e par
(\ref{equation-T-inprop}). Sinon, d'apr\`es le lemme \ref{lemma-P(T)},
il existe $f'\in\oplus_{n\geq 1}R_n^-(\sigma)$ tel que $f+f'\in
P_1(T)(\oplus_{n\geq 1}R_n^-(\sigma))$, et donc par
(\ref{equation-T-inprop}):
\[[\mathrm{Id},v_0]+h+(T-\lambda)(f')\in P(T)(\cInd_{KZ}^G\sigma).\]
Cela entra\^ine (\ref{equation-R_n<R_n+2}) et termine la preuve de (i).\vv

(ii) Comme $R_0(\sigma,\pi)$ est engendr\'ee par
$\overline{[\mathrm{Id},v_0]}$ en tant que $K$-repr\'esentation et comme
$\sum_{n\geq 1}R_n(\sigma,\pi)$ est une $K$-repr\'esentation
contenant $\sum_{n\geq 0}R_n^-(\sigma,\pi)$, l'assertion (ii) d\'ecoule de (i).
\end{proof}

\begin{rem}\label{remark-Ind-nonadm}
Comme $\cInd_{KZ}^G\sigma$ est r\'eductible (\cite[th\'eor\`eme 25]{BL2})
et non admissible (\cite[proposition 14(2)]{BL2}), la
proposition \ref{lemma-R_n<R_n+2} s'applique en particulier
lorsque $\pi$ est  un $G$-quotient irr\'eductible ou admissible de
$\cInd_{KZ}^G\sigma$.
\end{rem}

\subsubsection{L'op\'erateur $S$}

La d\'efinition suivante, qui sera tr\`es importante pour la suite, est extraite de la formule pour $T$ du  lemme \ref{lemma-Sv=Tv}(ii).

\begin{defn}On d\'efinit l'op\'erateur $S$, op\'erant sur toute repr\'esentation de $G$,  par
\begin{equation}\label{equation-define-Sv}
S:=\summ_{\lambda\in\F_q}\matr{\varpi}{[\lambda]}01\in \bFp[G].
\end{equation}
En posant $S^1=S$, on d\'efinit par r\'ecurrence $S^n:=S\cdot S^{n-1}\in\bFp[G]$ pour $n\geq 2$.   
\end{defn}

Donnons des propri\'et\'es \'el\'ementaires concernant $S$.

\begin{lem}\label{lemma-H-invariant}
Soient $\pi$ une repr\'esentation de $G$ et $v\in\pi$ un vecteur.

(i) Si $v$ est fix\'e par $I_1\cap U^+$, alors il en est de m\^eme de $Sv$. 

(ii) Si $v$ est fix\'e par $I_1\cap U^+$, alors il en est de m\^eme de tout vecteur de $\langle \smatr{1+\p}00{1+\p}\cdot v\rangle$ et on a
$h\cdot Sv=S(h\cdot v)$
pour tout $h\in \smatr{1+\p}00{1+\p}$.


 (iii) Si $v$ est fix\'e par $\smatr{1+\p}{\cO}0{1+\p}$, alors il en est de m\^eme de $Sv$.

(iv) Si $v$ est fix\'e par $I_1$, alors il en est de m\^eme de $Sv$.
\end{lem}
\begin{proof}
(i) Soit  $x\in\cO$ que l'on d\'ecompose en $x=\sum_{n\geq
0}\varpi^n[\mu_n]$ avec $\mu_n\in \F_q\subset \bFp$ pour tout $n\geq 0$. L'\'enonc\'e se d\'eduit du calcul suivant
\[\begin{array}{rll}
\matr1{x}01Sv&=&\summ_{\lambda\in\F_q}\matr{1}{x}01\matr{\varpi}{[\lambda]}01v\\
&=&\summ_{\lambda\in\F_q}\matr{\varpi}{[\mu_0]+[\lambda]+\sum_{n\geq 1}\varpi^n[\mu_n]}01v\\
&=&\summ_{\lambda\in\F_q}\matr{\varpi}{[\mu_0+\lambda]+\varpi X}01v \\
&=&\summ_{\lambda\in\F_q}\matr{\varpi}{[\mu_0+\lambda]}01\matr{1}{X}01v\\
&=&\summ_{\lambda\in\F_q}\matr{\varpi}{[\mu_0+\lambda]}01v\\
&=&Sv\end{array}\] 
o\`u l'on a utilis\'e le fait que $[\mu_0]+[\lambda]-[\mu_0+\lambda]\in\p$ (voir \cite[\S II.6]{Se3}) pour obtenir $X\in\cO$ qui d\'epend de $x$ et de
$\lambda$.

(ii) Le premier \'enonc\'e r\'esulte de la formule suivante (o\`u $a,b,d\in\cO$):
\[\matr1b01\matr{1+\varpi a}00{1+\varpi d}
=\matr{1+\varpi a}00{1+\varpi d}\matr{1}{\frac{b(1+\varpi d)}{1+\varpi a}}01.
\]
D\'emontrons le deuxi\`eme. Soit $h=\smatr{1+\varpi a}00{1+\varpi d}\in \smatr{1+\p}00{1+\p}$. On calcule:
\[\begin{array}{rll}h\cdot Sv&=& \matr{1+\varpi a}00{1+\varpi d}\summ_{\lambda\in\F_q}\matr{\varpi}{[\lambda]}01v\\
&=&\summ_{\lambda\in\F_q}\matr{\varpi}{\frac{[\lambda](1+\varpi a)}{1+\varpi d}}01\matr{1+\varpi a}00{1+\varpi d}v\\
&=&\summ_{\lambda\in\F_q}\matr{\varpi}{[\lambda]}01\matr{1}{X}01 \matr{1+\varpi a}00{1+\varpi d}v\\
&{=}&\summ_{\lambda\in\F_q}\matr{\varpi}{[\lambda]}01\matr{1+\varpi a}00{1+\varpi d}v\\
&=&S(h\cdot v)\end{array}\]
o\`u $X$ est l'unique \'el\'ement de $\cO$, d\'ependant de $a$, $d$ et $\lambda$, tel que 
$\frac{[\lambda](1+\varpi a)}{1+\varpi d}=[\lambda]+\varpi X$.

(iii) C'est une cons\'equence de (i) et  (ii).

(iv) Compte tenu de (iii), on est ramen\'e  par la d\'ecomposition d'Iwahori (\ref{equation-Iwahori}) \`a regarder l'action de $\smatr{1}0{\p}1$ sur $Sv$. Comme pr\'ec\'edemment, on a le calcul suivant (o\`u $c\in\cO$):
\[\begin{array}{rll}\matr10{\varpi  c}1Sv&=&\matr10{\varpi c}1\summ_{\lambda\in\F_q}\matr{\varpi}{[\lambda]}01v\\
&=&\summ_{\lambda\in\F_q}\matr{\varpi}{\frac{[\lambda]}{1+\varpi c[\lambda]}}01\matr{\frac{1}{1+\varpi c[\lambda]}}0{\varpi^{2}c}{1+\varpi c[\lambda]}v\\
&=&\summ_{\lambda\in\F_q}\matr{\varpi}{\frac{[\lambda]}{1+\varpi c[\lambda]}}01v\\
&=&Sv \end{array}\]
d'o\`u le r\'esultat. 
\end{proof}

Soit maintenant $\pi$ une repr\'esentation lisse \emph{irr\'eductible} de $G$ (admettant un caract\`ere central). D'apr\`es \cite{BL2}, de telles repr\'esentations sont class\'ees en quatre cat\'egories: les \emph{caract\`eres}, les \emph{s\'eries principales}, les \emph{s\'eries sp\'eciales} et les \emph{supersinguli\`eres}. On y renvoie le lecteur pour les d\'etails. 

\begin{lem}\label{lemma-S^m=0-pas}
Si $\pi$ est supersinguli\`ere, alors pour tout $v\in \pi^{I_1}$, il existe $m\geq 1$ tel que $S^mv=0$.
\end{lem}
\begin{proof}
Rappelons que $\cH\subset I$ d\'esigne le sous-groupe des matrices de la forme $\smatr{[\lambda]}00{[\mu]}$ avec $\lambda,\mu\in\F_q^{\times}$. C'est un groupe abelien d'ordre premier \`a $p$, de telle sorte que
$v$ puisse s'\'ecrire sous la forme $v=\sum_{i}v_i$ avec $v_i$ des vecteurs propres de $\cH$ de caract\`eres propres distincts l'un de l'autre. On est donc ramen\'e au cas o\`u $v$ est un vecteur propre de $\cH$. D'apr\`es \cite[lemmes 2.6, 2.7]{BP}, on a ou bien $Sv=0$ (d'o\`u le lemme), ou
bien $Sv\neq 0$ et $\langle K\cdot Sv\rangle\subset \pi$
est une repr\'esentation irr\'eductible de $K$ auquel cas le lemme d\'ecoule de \cite[corollaire 3.3]{Pa3}.
\end{proof}


\subsection{Le foncteur $\Ind_I^K$}\label{subsection-Ind-I-K}
\subsubsection{R\'eciprocit\'e de Frobenius }

Rappelons que, si $M$ est une repr\'esentation lisse de $I$, on d\'esigne par $\Ind_I^KM$ la
$K$-repr\'esentation induite, ce qui fournit un foncteur exact de $\Rep_I$ dans $\Rep_K$.
On note $W=\Ind_I^KM$ et on pose
$\mathrm{pr}_{M}:W\twoheadrightarrow M$ le morphisme $I$-\'equivariant naturel
induit par l'identit\'e $\id:W\simto W$ par r\'eciprocit\'e de Frobenius. Plus
pr\'ecis\'ement, si
\[f=a_1[1,v_1]+\summ_{k\in K/I, k\neq 1}a_k[k,v_k]\in\Ind_I^KM\]
avec $a_k\in\bFp$ et $v_k\in M$ pour tout $k\in K/I$, alors
$\mathrm{pr}_M(f):=a_1v_1$. Ce morphisme admet
une section $I$-\'equivariante $i_M: v\in M\mapsto [1,v]\in W$. Ceci r\'ealise la
d\'ecomposition de Mackey pour $W$ en tant que $I$-repr\'esentation:
\begin{equation}\label{equation-W=M+W+}W=\Ind_I^KM=M\oplus W^+\end{equation}
avec $W^+$ le noyau de $\mathrm{pr}_M$. L'espace sous-jacent à
$W^+$ est l'espace vectoriel engendr\'e par
\[\Bigl\{\matr{[\lambda]}110[1,v],\ \  v\in M, \lambda\in\F_q\Bigr\},\]
ou, de mani\`ere \'equivalente, par
\begin{equation}\nonumber
\Bigl\{\summ_{\lambda\in \F_q}\lambda^i\matr{[\lambda]}110[1,v],\ v\in M,\ 0\leq i\leq q-1\Bigr\}.\end{equation}

Soit $Q$ une
$K$-repr\'esentation contenant $M$ et engendr\'ee par $M$. On a par r\'eciprocit\'e de Frobenius une surjection
$K$-\'equivariante $\alpha:W=\Ind_I^KM\twoheadrightarrow Q$ donn\'ee par $[k,v]\mapsto k\cdot v$ pour $k\in K$ et $v\in M$. Soit $W_1$ son noyau et soit $M_1$ l'image de $W_1$ dans $M$ via le morphisme compos\'e
\[W_1\hookrightarrow W \overset{\mathrm{pr}_M}{\twoheadrightarrow} M.\]
On note $Q^+$ l'image de $W^+\subset W$ dans $Q$ de sorte que l'on puisse consid\'erer l'intersection $M\cap Q^+$ comme un sous-espace de $Q$. Les remarques pr\'ec\'edentes sont illustr\'ees par le diagramme commutatif suivant \`a lignes et \`a colonnes exactes (o\`u les morphismes $\overline{\alpha}$ et $\overline{\mathrm{pr}}_M$ sont d\'efinis de mani\`ere \'evidente):
\[\xymatrix{&&0\ar[d]&0\ar[d]&\\
&&\ar[r]\ar[d]W^+\ar[r]\ar[d]&Q^+\ar[r]\ar[d]&0\\
0\ar[r]&W_1\ar[r]\ar[d]&W\ar^{\alpha}[r]\ar[d]_{\mathrm{pr}_M}&Q\ar[r]\ar^{\overline{\mathrm{pr}}_M}[d]&0\\
0\ar[r]&M_1\ar[r]\ar[d]&M\ar^{\overline{\alpha}}[r]\ar[d]\ar@{.>}@/_/[u]_{i_M}&M/M_1\ar[r]\ar[d]&0\\
&0&0&0}\]
dont l'exactitude de la colonne de droite r\'esulte du
lemme du serpent.
Le lemme suivant sera utilis\'e de mani\`ere cruciale au
\S\ref{section-diag-canonique}, en particulier au lemme
\ref{lemme-deschoices}.

\begin{lem}\label{lemma-intersection}
Avec les notations pr\'ec\'edentes, on a \[M_1=M\cap Q^+,\ \  W_1\subseteq
\Ind_I^KM_1;\]
de plus, $M_1$ est la plus petite sous-repr\'esentation de $M$ ayant cette derni\`ere propri\'et\'e.
\end{lem}
\begin{proof}
Par la d\'efinition de $i_M$, on a $\alpha\circ i_M=(\mathrm{id}:M\hookrightarrow Q)$.
Par cons\'equent, si $v\in M$, on a:
\[\begin{array}{rll}
v\in M_1 &\Longleftrightarrow & \overline{\alpha}(v)=0\\
&\Longleftrightarrow &\overline{\mathrm{pr}}_M\circ\alpha\circ i_M(v)=0\\
&\Longleftrightarrow& \overline{\mathrm{pr}}_M(v)=0\\
&\Longleftrightarrow& v\in Q^+,
\end{array}\]
d'o\`u le premier \'enonc\'e.

Pour les autres, en rempla\c{c}ant $M$ par $M/M_1$ et $W_1$ par
$W_1/(\Ind_I^KM_1\cap W_1)$, on est ramen\'e \`a montrer l'\'enonc\'e suivant:
\emph{si $W_1\subset\Ind_I^KM$ est une sous-$K$-repr\'esentation,
alors $W_1=0$ si et seulement si $M_1=0$.} Or, ceci est une
cons\'equence directe de la r\'eciprocit\'e de Frobenius.
\end{proof}

Donnons une application du lemme \ref{lemma-intersection}:
\begin{cor}\label{cor-intersection}
Sous les hypoth\`eses ci-dessus, on suppose que $Q$ est une repr\'esentation irr\'eductible de $K$ engendr\'ee par son espace $I_1$-invariant $Q^{I_1}$. On choisit $M=Q^{I_1}$. Alors $Q$ et $Q^+$ co\"{i}ncident. C'est-\`a-dire, $Q$ est engendr\'e sur $\bFp$ par les vecteurs
\[\biggl\{\matr{[\lambda]}110v,\ \  \lambda\in\F_q\biggr\}.\]
\end{cor}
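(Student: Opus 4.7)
My plan is to reduce the desired equality $Q = Q^+$ to the inclusion $M\subseteq Q^+$, and then to apply Lemma \ref{lemma-intersection} combined with the irreducibility of $Q$ to conclude.

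I would first use the coset decomposition (\ref{equation-decom-K/I}), $K = I\coprod\coprod_{\lambda\in\F_q}\smatr{[\lambda]}110 I$, together with the $I$-stability of $M = Q^{I_1}$ (puisque $I$ normalise $I_1$), to obtain the equality of $\bFp$-vector spaces
\[Q = K\cdot M = M + \sum_{\lambda\in\F_q}\matr{[\lambda]}110 M = M + Q^+,\]
the last equality because $Q^+$ is by construction the image of $W^+$ in $Q$ via $\alpha$, hence is $\bFp$-spanned by the vectors $\smatr{[\lambda]}110 v$ with $v\in M$. It follows that the desired equality $Q = Q^+$ is equivalent to $M\subseteq Q^+$, which by Lemma \ref{lemma-intersection} is in turn equivalent to $M_1 = M$.

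Next, the irreducibility of $Q$ implies $\dim_{\bFp}M = \dim_{\bFp}Q^{I_1} = 1$ (cf. \S\ref{notations-2}), so $M_1$ equals either $0$ or $M$. The crux of the argument is to exclude the case $M_1 = 0$. Assuming $M_1 = 0$ for contradiction, Lemma \ref{lemma-intersection} forces $W_1\subseteq \Ind_I^K 0 = 0$, so that $\alpha:W\simto Q$ is an isomorphism and hence $\dim_{\bFp}Q = [K:I]\cdot\dim_{\bFp}M = q+1$. This contradicts the classical upper bound $\dim_{\bFp}Q\leq q$ for irreducible $\bFp$-repr\'esentations of $K$: une telle repr\'esentation se factorise par $\GL_2(\F_q)$ et, \`a torsion pr\`es par le d\'eterminant et par le Frobenius, est de la forme $\bigotimes_{i}\Sym^{r_i}\bFp^2$ avec $0\leq r_i\leq p-1$, donc de dimension $\prod(r_i+1)\leq q$. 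On en d\'eduit $M_1 = M$, ce qui, combin\'e au paragraphe pr\'ec\'edent, donne $Q = Q^+$.

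Le seul vrai obstacle est l'argument de dimension excluant $M_1 = 0$ ; c'est aussi l'unique endroit o\`u l'hypoth\`ese d'irr\'eductibilit\'e sur $Q$ (et non simplement l'hypoth\`ese que $Q$ est engendr\'e par $Q^{I_1}$) intervient r\'eellement.
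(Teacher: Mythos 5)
Your proof is correct and follows the same skeleton as the paper's: both reduce the equality $Q=Q^+$ to the inclusion $M\subseteq Q^+$, invoke Lemme \ref{lemma-intersection} to convert this to $M_1=M$, and then use $\dim_{\bFp}M=1$ to reduce to ruling out $M_1=0$, equivalently $W_1=0$. The only point of divergence is how $W_1=0$ is excluded. The paper cites \cite[lemmes 2.3, 2.4]{BP} for the reducibility of $\Ind_I^KM$ as a $K$-representation (so that $\alpha$ cannot be injective). You instead observe that $W_1=0$ would force $\dim_{\bFp}Q=\dim_{\bFp}\Ind_I^KM=[K:I]=q+1$, and contradict this with the classical bound $\dim_{\bFp}Q\leq q$ for irreducible $\bFp$-representations of $K$ (factoring through $\GL_2(\F_q)$ and of the form $\det^m\otimes\bigotimes_i(\Sym^{r_i}\bFp^2)^{\Fr^i}$ with $0\leq r_i\leq p-1$). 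These are really two ways of citing the same underlying fact, namely that $\GL_2(\F_q)$ has no irreducible $\bFp$-representation of dimension $q+1$; both are valid, and your variant has the small merit of relying only on the standard classification of poids already used in \S\ref{subsection-Hecke} rather than on the structural results of \cite{BP}. Your explicit observation that $Q=M+Q^+$ (via the coset decomposition (\ref{equation-decom-K/I})) is also exactly the final step the paper makes, just stated before rather than after the dimension argument.
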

\begin{proof}
Comme on l'a indiqu\'e dans l'introduction, $M$ est non trivial de dimension 1 sur $\bFp$. Consid\'erons la surjection naturelle $K$-\'equivariante $W=\Ind_I^KM\twoheadrightarrow Q$. Comme $\Ind_I^KM$ n'est pas irr\'eductible par \cite[lemmes 2.3, 2.4]{BP}, on voit que le noyau $W_1$ est non trivial et qu'il en est de m\^eme de $M_1\subseteq M$ puisque $W_1\subseteq \Ind_I^KM_1$ d'apr\`es le lemme \ref{lemma-intersection}. Or, $M$ est de dimension 1, on a donc $M_1=M$, puis $M$ s'injecte dans $Q^+$ en appliquant \`a nouveau le lemme \ref{lemma-intersection}. L'\'enonc\'e s'en d\'eduit puisque l'espace vectoriel $Q$ est engendr\'e par $M$ et $Q^+$.
\end{proof}

\subsubsection{Invariants sous le groupe $I_1\cap U^+$}

Si $M$ est une repr\'esentation de $I$, on note ${\mathrm{\Pi}}(M)$ la repr\'esentation de $I$ d\'efinie par:
\begin{enumerate}
\item[--] l'espace sous-jacent de $\Pi(M)$ est l'ensemble des symboles $\{\Pi(v),\ v\in M\}$ qui forment un
$\bFp$-espace vectoriel de mani\`ere \'evidente;

\item[--] l'action de $I$ sur $\Pi(M)$ est donn\'ee par:
\begin{equation}\label{equation-define-Pi(M)}
h\cdot {\mathrm{\Pi}}(v):=\Pi\bigl({\mathrm{\Pi}}^{-1}h{\mathrm{\Pi}}\cdot v\bigr),\ \ h\in I, v\in M.
\end{equation}
\end{enumerate}
\begin{rem}
Rappelons que $N$ est le sous-groupe de $G$ engendr\'e par $I$ et $\Pi$. Si $M'$ est une repr\'esentation de $N$ et si $M\subset M'$ est
un sous-espace vectoriel stable par $I$, alors
l'espace ${\mathrm{\Pi}}(M)$ est aussi stable par $I$ avec l'action d\'efinie
par (\ref{equation-define-Pi(M)}).
\end{rem}

\begin{prop}\label{prop-U+-invariant}
Soit $M$ une repr\'esentation lisse de $I$. Posons
$W=\Ind_I^K{\mathrm{\Pi}}(M)$ et $W^+\subset W$ le sous-espace vectoriel engendr\'e par
\[\Bigl\{F_{i,v}:=\summ_{\lambda\in\F_q}\lambda^i\matr{[\lambda]}110[1,{\mathrm{\Pi}}(v)],\ \ v\in M,\ 0\leq i\leq q-1\Bigr\}.\]

(i) Pour tout vecteur $v\in M$ fix\'e par ${I_1\cap U^+}$, l'\'el\'ement
$F_{0,v}\in W^+$ est fix\'e par ${I_1\cap U^+}$.

(ii) L'espace $W^+$ est stable par $I_1\cap U^+$ et $(W^+)^{I_1\cap U^+}$ est engendr\'e sur $\bFp$ par $\{F_{0,v},\ v\in M^{I_1\cap U^+}\}$.
\end{prop}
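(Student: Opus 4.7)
Mon approche consisterait \`a calculer explicitement l'action des matrices $\smatr{1}{x}{0}{1}$ (avec $x\in\cO$) sur les g\'en\'erateurs $F_{i,v}$. La premi\`ere \'etape est de d\'ecomposer le produit $\smatr{1}{x}{0}{1}\smatr{[\lambda]}{1}{1}{0}$ dans $K$ selon la d\'ecomposition (\ref{equation-decom-K/I}). Un calcul direct donne
\[\matr{1}{x}{0}{1}\matr{[\lambda]}{1}{1}{0}=\matr{[\lambda+\overline{x}]}{1}{1}{0}\matr{1}{0}{c_\lambda}{1},\quad c_\lambda:=x+[\lambda]-[\lambda+\overline{x}]\in\p,\]
le facteur de droite appartenant \`a $I_1\cap U^-$. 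Le point-cl\'e est alors le calcul de conjugaison $\Pi^{-1}\smatr{1}{0}{c_\lambda}{1}\Pi=\smatr{1}{c_\lambda/\varpi}{0}{1}$, qui appartient \`a $I_1\cap U^+$. Via la d\'efinition (\ref{equation-define-Pi(M)}) de $\Pi(M)$, ceci entra\^ine $\smatr{1}{0}{c_\lambda}{1}\cdot\Pi(w)=\Pi(\smatr{1}{c_\lambda/\varpi}{0}{1}w)$ pour tout $w\in M$. C'est pr\'ecis\'ement cette interaction qui justifie l'introduction du twist par $\Pi$: l'action ``r\'esiduelle'' sur $M$ se fait par $I_1\cap U^+$, conform\'ement \`a l'\'enonc\'e.

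Pour (i), on applique cette formule avec $v\in M^{I_1\cap U^+}$: le facteur $\smatr{1}{c_\lambda/\varpi}{0}{1}$ agit trivialement sur $v$, et la bijection $\lambda\mapsto\lambda+\overline{x}$ de $\F_q$ montre imm\'ediatement que $\smatr{1}{x}{0}{1}F_{0,v}=F_{0,v}$.

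Pour (ii), la stabilit\'e de $W^+$ sous $I_1\cap U^+$ (en fait sous $I$ tout entier) d\'ecoule de sa description comme noyau du morphisme $I$-\'equivariant $\mathrm{pr}_{\Pi(M)}:W\twoheadrightarrow\Pi(M)$ issu de la d\'ecomposition de Mackey (\ref{equation-W=M+W+}). Pour l'identification des invariants, on \'ecrit $f\in(W^+)^{I_1\cap U^+}$ de mani\`ere unique sous la forme $f=\summ_{\mu\in\F_q}\smatr{[\mu]}{1}{1}{0}[1,\Pi(w_\mu)]$ avec $w_\mu\in M$, en utilisant la d\'ecomposition $W^+=\bigoplus_{\mu\in\F_q}\smatr{[\mu]}{1}{1}{0}[1,\Pi(M)]$ comme espace vectoriel. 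L'\'equation $\smatr{1}{x}{0}{1}f=f$, combin\'ee au calcul ci-dessus et apr\`es r\'eindexation, fournit pour tout $x\in\cO$ et tout $\mu\in\F_q$ la relation
\[w_\mu=\matr{1}{(x+[\mu-\overline{x}]-[\mu])/\varpi}{0}{1}w_{\mu-\overline{x}}.\]
On r\'esout ce syst\`eme en deux temps: en sp\'ecialisant \`a $x\in\p$ (donc $\overline{x}=0$), chaque $w_\mu$ est fix\'e par $\smatr{1}{\cO}{0}{1}=I_1\cap U^+$; puis en sp\'ecialisant \`a $x=[a]$ avec $a\in\F_q$, le facteur matriciel agit trivialement sur $w_{\mu-a}$ (d\'ej\`a $I_1\cap U^+$-invariant), ce qui donne $w_\mu=w_{\mu-a}$. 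Tous les $w_\mu$ co\"incident donc avec un m\^eme vecteur $w_0\in M^{I_1\cap U^+}$, et $f=F_{0,w_0}$.

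La principale difficult\'e technique r\'eside dans le calcul initial de d\'ecomposition et dans le suivi soigneux des repr\'esentants de $K/I$ et du terme correctif $\smatr{1}{0}{c_\lambda}{1}$, ainsi que dans la conjugaison par $\Pi$ qui transforme cette action en une action par $I_1\cap U^+$ via le twist; tout le reste rel\`eve d'alg\`ebre lin\'eaire et de manipulations combinatoires sur $\F_q$.
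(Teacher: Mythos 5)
Your proof is correct, and it takes a genuinely different route from the paper's. The paper parametrizes $W^+$ by the \emph{Fourier} basis $\{F_{i,v}\}_{0\le i\le q-1}$ and then uses a differentiation-type operator $\sum_{\mu\in\F_q}\mu^{q-2}\smatr{1}{[\mu]}{0}{1}$ (compared there to $l\in\mathrm{Lie}\,\mathbf{GL}_2$): after a preliminary averaging by some $Q\in\bFp[I_1\cap U^+]$, this operator sends $F_{i,Qv_i}$ to $i\,F_{i-1,Qv_i}$ and annihilates the invariant vector, forcing the higher components to vanish one index at a time. You instead keep the \emph{direct-sum} basis $W^+=\bigoplus_{\mu\in\F_q}\smatr{[\mu]}{1}{1}{0}[1,\Pi(M)]$, compute the exact Iwahori factorization $\smatr{1}{x}{0}{1}\smatr{[\lambda]}{1}{1}{0}=\smatr{[\lambda+\overline{x}]}{1}{1}{0}\smatr{1}{0}{c_\lambda}{1}$ with $c_\lambda\in\p$, observe that $\Pi^{-1}\smatr{1}{0}{c_\lambda}{1}\Pi=\smatr{1}{c_\lambda/\varpi}{0}{1}\in I_1\cap U^+$ acts on $M$ through the twist, and translate invariance of $f$ into the functional equation $w_\mu=\smatr{1}{c_\mu/\varpi}{0}{1}w_{\mu-\overline{x}}$. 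Specializing first to $x\in\p$ (yielding $w_\mu\in M^{I_1\cap U^+}$) and then to $x=[a]$ (yielding $w_\mu=w_{\mu-a}$) pins everything down. What your approach buys: it is more elementary (no combinatorial identities $\sum_\mu\mu^j$), it handles all indices uniformly and so sidesteps the delicacy in the paper's argument when $i$ happens to be divisible by $p$ (which makes the coefficient $i\in\bFp$ vanish), and it makes transparent exactly why the twist by $\Pi$ is there: the Iwahori lower unipotent correction $\smatr{1}{0}{c_\lambda}{1}$ becomes upper unipotent after conjugation. What the paper's approach buys is that it stays in the Fourier basis $F_{i,v}$, which is the one used in the rest of \S\ref{subsection-Ind-I-K} (in particular in the corollaries on $R_n^+(\sigma)$), so no change-of-basis bookkeeping is needed downstream.
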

\begin{proof}
(i) La m\^eme preuve que celle du lemme \ref{lemma-H-invariant}(i).

(ii) Par la d\'ecomposition (\ref{equation-W=M+W+}), $W^+$ est $I$-stable et donc $I_1\cap U^+$-stable. Soit $f\in W^+$ un vecteur non nul fix\'e par $I_1\cap U^+$.  \'Ecrivons $f$ sous la forme
$f=\sum_{i=0}^{q-1}F_{i,v_i}$
avec $v_i\in M$. Montrons que $v_0\in M^{I_1\cap U^+}$ et que $v_i=0$ pour tout $i>0$. 

Soit $i_0>0$ un indice fix\'e tel que $v_{i_0}\neq 0$. Comme $I_1\cap U^+$ est un pro-$p$-groupe, la repr\'esentation $\langle (I_1\cap U^+)\cdot v_{i_0}\rangle$ poss\`ede des vecteurs $I_1\cap U^+$-invariants non nuls, donc il existe un \'el\'ement $Q\in\bFp[I_1\cap U^+]$ tel que $Qv_{i_0}$ soit non nul fix\'e par $I_1\cap U^+$. De plus, en appliquant le m\^eme raisonnement \`a tous les $i$, $1< i\leq q-1$, et quitte \`a modifier $Q$, on peut supposer que $Qv_i$ est fix\'e par $I_1\cap U^+$ pour tout $1<i\leq q-1$. Par la formule suivante (avec $a\in\cO$ et $\lambda\in\F_q$):
\begin{equation}\label{equation-U^+-inv}\matr1{\varpi a}01\matr{\varpi}{[\lambda]}01=\matr{\varpi}{[\lambda]}01\matr1{a}01\end{equation}
on voit que $F_{i,Qv_i}$ est fix\'e par $\smatr1{\p}01$ pour tout $i>0$.
Puis, encore d'apr\`es (\ref{equation-U^+-inv}), on peut trouver $Q'\in\bFp[I_1\cap U^+]$ v\'erifiant
\[Q'\matr{\varpi}{[\lambda]}01=\matr{\varpi}{[\lambda]}01 Q,\ \ \forall \lambda\in\F_q,\]
d'o\`u \begin{equation}\label{equation-U^+-inv-2}Q'f=\summ_{i=0}^{q-1}Q'F_{i,v_i}=\summ_{i=0}^{q-1}F_{i,Qv_i}.\end{equation} On en d\'eduit que $F_{0,Qv_0}$ est fix\'e par $\smatr1{\p}01$ parce que $Q'f$ et $F_{i,Qv_i}$ pour $i>0$ le sont, 
puis que $Qv_0$ est fix\'e par $I_1\cap U^+$ en utilisant encore l'\'equation (\ref{equation-U^+-inv}).


Maintenant, les vecteurs $Qv_i$ \'etant tous fix\'es par $I_1\cap U^+$, on a le calcul suivant pour \emph{tout} $0\leq i\leq q-1$, avec la convention que $F_{-1,Qv_0}:=0$ (comparer avec la d\'emonstration de \cite[lemme 2]{BL2}, particuli\`erement avec l'op\'erateur $l\in \mathrm{Lie} \mathbf{GL_2}$):
\[\begin{array}{rll} \displaystyle \summ_{\mu\in\F_q}\mu^{q-2}\matr{1}{[\mu]}01F_{i,Qv_i}&=&\summ_{\mu,\lambda\in\F_q}\mu^{q-2}\lambda^i
\matr{\varpi}{[\mu]+[\lambda]}01Qv_i\\
&=&\summ_{\mu,\lambda\in\F_q}\mu^{q-2}\lambda^i
\matr{\varpi}{[\mu+\lambda]}01Qv_i\\
&=&\summ_{\mu,\lambda\in\F_q}\mu^{q-2}(\lambda-\mu)^i\matr{\varpi}{[\lambda]}01Qv_i\\
&=&\summ_{k=0}^i\dbinom{i}{k}\Bigl(\summ_{\mu\in\F_q}\mu^{q-2}(-\mu)^{i-k}\Bigr)\Bigl(\summ_{\lambda\in\F_q}
\lambda^k\matr{\varpi}{[\lambda]}01Qv_i\Bigr)\\
&=&iF_{i-1,Qv_i}
\end{array}\]
o\`u la derni\`ere \'egalit\'e vient du fait que, si $0<j<2(q-1)$, alors la somme $\sum_{\mu\in\F_q}\mu^j$ vaut $-1$ si $j=q-1$  et $0$ sinon. D'autre part, comme $Q'f$ est fix\'e par $I_1\cap U^+$, on voit que:
\[\summ_{\mu\in\F_q}\mu^{q-2}\matr1{[\mu]}01 Q'f=\Bigl(\summ_{\mu\in\F_q}\mu^{q-2}\Bigr)Q'f=0.\]
En utilisant (\ref{equation-U^+-inv-2}), on en d\'eduit facilement que $F_{i-1,Qv_i}=0$ puis $Qv_i=0$ pour tout $i>0$, ce qui est impossible puisque $Qv_{i_0}\neq 0$. Cette contradiction montre que $v_i=0$ pour tout $i>0$ et donc $f=F_{0,v_0}$. Enfin, puisque $f$ est fix\'e par $I_1\cap U^+$, l'\'equation (\ref{equation-U^+-inv}) permet de conclure que $v_0$ l'est aussi. Cela termine la d\'emonstration.
\end{proof}
%

\subsubsection{Cons\'equences}\label{subsubsection-injective}
Rappelons que si $\sigma$ est un poids, on a d\'efini les
sous-espaces $IZ$-stables $R_n^+(\sigma)$ pour tout $n\geq 0$ de $\cInd_{KZ}^G\sigma$ au
\S\ref{subsection-Hecke}. Notons que l'\'el\'ement $S$ d\'efini par (\ref{equation-define-Sv}) induit une application lin\'eaire de $R_n^+(\sigma)$ dans $R_{n+1}^+(\sigma)$. Le corollaire suivant am\'eliore \cite[proposition 14(2)]{BL2}. Soit $v_0\in\sigma$ un vecteur non nul fix\'e par $I_1$.

\begin{prop}\label{prop-1-dim}
Pour tout $n\geq0$, on a $\dim_{\bFp} {R_n^+(\sigma)}^{I_1\cap U^+}=1$ et $R_n^+(\sigma)^{I_1\cap U^+}=\bFp S^n[\id,v_0]$.
\end{prop}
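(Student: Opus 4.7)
Le plan est de proc\'eder par r\'ecurrence sur $n \geq 0$, en s'appuyant sur la proposition \ref{prop-U+-invariant}(ii) qui d\'ecrit pr\'ecis\'ement les invariants sous $I_1 \cap U^+$ d'un espace du type $W^+$. Pour le cas $n=0$, on remarquera d'abord que $R_0^+(\sigma)=[\id,\sigma]\cong\sigma$ comme $IZ$-repr\'esentation, puis que $I_1=(I_1\cap U^+)\cdot K_1$ (v\'erification directe, les deux c\^ot\'es se projetant sur $\smatr{1}{\F_q}{0}{1}\subset K/K_1$). Comme $\sigma$ est triviale sur $K_1$, cela entra\^inera $\sigma^{I_1\cap U^+}=\sigma^{I_1}=\bFp v_0$, d'o\`u $R_0^+(\sigma)^{I_1\cap U^+}=\bFp[\id,v_0]=\bFp S^0[\id,v_0]$.

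Pour l'\'etape de r\'ecurrence ($n\geq 1$), l'id\'ee centrale est d'interpr\'eter le lemme \ref{lemma-decomposition} dans le cadre de la proposition \ref{prop-U+-invariant}. On posera $M:=R_{n-1}^+(\sigma)$; la multiplication par $\Pi$ r\'ealisera alors un isomorphisme $I$-\'equivariant $\Pi(M)\simto R_{n-1}^-(\sigma)$ (puisque $\Pi$ normalise $I$, ce qui assure la compatibilit\'e des actions au sens de (\ref{equation-define-Pi(M)})). En posant $W:=\Ind_I^K\Pi(M)$, la formule (\ref{equation-R_n(sigma)=summ}) du lemme \ref{lemma-decomposition} et la d\'ecomposition de Mackey $W=\Pi(M)\oplus W^+$ montreront que l'application de Frobenius $[k,\Pi v]\mapsto k\Pi v$ se restreint en un isomorphisme $I$-\'equivariant $W^+\simto R_n^+(\sigma)$ (chaque facteur $\smatr{[\lambda]}{1}{1}{0}[1,\Pi(M)]$ de $W^+$ s'envoyant bijectivement sur $\smatr{[\lambda]}{1}{1}{0}R_{n-1}^-(\sigma)$).

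Il restera alors \`a calculer l'image de $F_{0,v}$ sous cette identification: gr\^ace \`a la relation $\smatr{[\lambda]}{1}{1}{0}\Pi=\smatr{\varpi}{[\lambda]}{0}{1}$, on aura
\[F_{0,v}=\summ_{\lambda\in\F_q}\matr{[\lambda]}{1}{1}{0}[1,\Pi v]\ \longmapsto\ \summ_{\lambda\in\F_q}\matr{\varpi}{[\lambda]}{0}{1}v=Sv.\]
En combinant ceci avec la proposition \ref{prop-U+-invariant}(ii) et l'hypoth\`ese de r\'ecurrence $M^{I_1\cap U^+}=\bFp S^{n-1}[\id,v_0]$, on obtiendra finalement $R_n^+(\sigma)^{I_1\cap U^+}=\bFp S(S^{n-1}[\id,v_0])=\bFp S^n[\id,v_0]$, espace de dimension 1 (le fait que $S^n[\id,v_0]$ soit effectivement invariant sous $I_1\cap U^+$ d\'ecoulant d'une application it\'er\'ee du lemme \ref{lemma-H-invariant}(i)). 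Le seul point techniquement d\'elicat semble \^etre la mise en place soigneuse de l'isomorphisme $W^+\simto R_n^+(\sigma)$ et la v\'erification du calcul ci-dessus; une fois ceci \'etabli, la proposition \ref{prop-U+-invariant}(ii) fournit directement le r\'esultat.
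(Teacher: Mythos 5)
Your proposal is correct and follows essentially the same path as the paper: induction on $n$, with the inductive step obtained by applying Proposition \ref{prop-U+-invariant}(ii) to $M=R_{n-1}^+(\sigma)$ together with the identification $R_n^+(\sigma)\cong\bigl(\Ind_I^K\Pi(R_{n-1}^+(\sigma))\bigr)^+$ coming from (\ref{equation-R_n(sigma)=summ}), and the computation $F_{0,v}\mapsto Sv$ under this identification. The only minor deviation is that you argue the base case $n=0$ directly from $I_1=(I_1\cap U^+)K_1$ and the triviality of $\sigma$ on $K_1$, whereas the paper simply cites \cite[lemme 2]{BL2}; both amount to the same fact.
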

\begin{proof}
On prouve la proposition par r\'ecurrence sur $n$. Il est vrai au rang $n=0$ d'apr\`es \cite[lemme 2]{BL2}. Il reste donc \`a
d\'emontrer que\[\dim_{\bFp}
{R_n^+(\sigma)}^{I_1\cap U^+}=1\Longrightarrow\dim_{\bFp}
{R_{n+1}^+(\sigma)}^{I_1\cap U^+}=1\]
et que
\[R_n^+(\sigma)^{I_1\cap U^+}=\bFp v_n\Longrightarrow R_{n+1}(\sigma)^{I_1\cap U^+}=\bFp Sv_n.\]
Or, cela est une cons\'equence du lemme
\ref{prop-U+-invariant} puisque $R_n^+(\sigma)$ est isomorphe \`a l'espace $\bigl(\Ind_I^K\Pi(R_{n-1}^+(\sigma))\bigr)^{+}$ d'apr\`es (\ref{equation-R_n(sigma)=summ}).
\end{proof}

 On pose pour tout $n\geq 0$ (encore dans $\cInd_{KZ}^G\sigma$) \[M_n^+(\sigma)=\Bigl[\matr{\varpi^n}{\cO}01,\bFp v_0\Bigr]\] ce qui fait de $M_n^+(\sigma)$ une sous-$IZ$-repr\'esentation de $R_n^+(\sigma)$ par le lemme \ref{lemma-IP=PI}.

\begin{cor}\label{corollaire-M_n+}
(i) $M_n^+(\sigma)$ est triviale sur $I_{n+1}\cap U^+=\smatr{1}{\p^{n}}01$.

(ii) En tant que repr\'esentation de $I_1\cap U^+/I_{n+1}\cap U^+$, $M_n^+(\sigma)$ est une enveloppe injective de la repr\'esentation triviale dans la cat\'egorie $\Rep_{I_1\cap U^+/I_{n+1}\cap U^+}$.
\end{cor}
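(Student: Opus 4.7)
Pour (i), le calcul matriciel direct
$$\smatr{1}{\varpi^n c}01\smatr{\varpi^n}{a}01 = \smatr{\varpi^n}{a+\varpi^n c}01 = \smatr{\varpi^n}{a}01\smatr{1}{c}01$$
combin\'e \`a la r\`egle $[gh,w]=[g,hw]$ dans $\cInd_{KZ}^G\sigma$ et au fait que $\smatr{1}{c}01\in I_1\cap U^+\subset I_1$ fixe $v_0$ montre que $\smatr{1}{\varpi^n c}01$ agit trivialement sur chaque g\'en\'erateur $[\smatr{\varpi^n}{a}01,v_0]$. Donc $I_{n+1}\cap U^+=\smatr{1}{\p^n}01$ agit trivialement sur $M_n^+(\sigma)$, ce qui \'etablit (i).

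Pour (ii), le plan est d'identifier explicitement $M_n^+(\sigma)$ \`a la repr\'esentation r\'eguli\`ere $\bFp[\cO/\p^n]$ du $p$-groupe fini $(I_1\cap U^+)/(I_{n+1}\cap U^+)\simeq \cO/\p^n$. L'\'etape centrale est le crit\`ere d'\'egalit\'e
$$[\smatr{\varpi^n}{a}01,v_0]=[\smatr{\varpi^n}{a'}01,v_0]\ \Longleftrightarrow\ a\equiv a'\pmod{\p^n},$$
que l'on obtient en constatant que l'unique $h$ tel que $\smatr{\varpi^n}{a'}01=\smatr{\varpi^n}{a}01 h$, \`a savoir $h=\smatr{1}{\varpi^{-n}(a'-a)}01$, appartient \`a $KZ$ (et alors fixe automatiquement $v_0$ car il appartient \`a $I_1\cap U^+$) si et seulement si $a'-a\in\p^n$; on utilise ici que $\bFp v_0=\sigma^{I_1}$ est stable par $I$. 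Un choix de repr\'esentants de $\cO/\p^n$ fournit donc une base $(e_{\bar a})$ de $M_n^+(\sigma)$ de cardinal $q^n$, et la formule de (i) donne $\smatr{1}{b}01\cdot e_{\bar a}=e_{\overline{a+b}}$: c'est exactement la repr\'esentation r\'eguli\`ere de $\cO/\p^n$.

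Une fois cette identification acquise, la conclusion r\'esulte d'un fait g\'en\'eral: pour tout $p$-groupe fini $H$, l'alg\`ebre de groupe $\bFp[H]$ est une $\bFp$-alg\`ebre locale auto-injective (de Frobenius) dont le socle est l'unique repr\'esentation simple (la triviale), et elle est donc elle-m\^eme l'enveloppe injective de la repr\'esentation triviale dans $\Rep_H$. La seule \'etape non formelle est la gestion pr\'ecise des \'egalit\'es dans l'induite compacte au moyen de la r\`egle $[gh,w]=[g,hw]$; tout le reste est standard.
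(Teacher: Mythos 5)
Your proof is correct, and it takes a genuinely different (and slightly more self-contained) route than the paper's. For (i) you do exactly what the paper does. For (ii), the paper takes the indirect route: it invokes Proposition \ref{prop-1-dim} (itself built on the analysis of $(W^+)^{I_1\cap U^+}$ in Proposition \ref{prop-U+-invariant}) to get $\dim_{\bFp} M_n^+(\sigma)^{I_1\cap U^+}=1$, then combines this with $\dim_{\bFp} M_n^+(\sigma)=q^n=|I_1\cap U^+/I_{n+1}\cap U^+|$ and cites \cite[\S5, cor.~4]{Al}, which forces freeness of rank 1 over $\bFp[\cO/\p^n]$. You instead identify $M_n^+(\sigma)$ directly with the regular representation of $\cO/\p^n$: the linear independence of the $q^n$ vectors comes from the $KZ$-coset (support) computation, the formula $\smatr1b01\cdot e_{\bar a}=e_{\overline{a+b}}$ from the same matrix identity as in (i), and then you only need the textbook fact that $\bFp[H]$ is self-injective and local for a finite $p$-group $H$. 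This bypasses Proposition \ref{prop-1-dim} entirely, which is the nontrivial input the paper relies on; your argument is shorter given that Proposition \ref{prop-1-dim} is not otherwise needed here, while the paper's formulation reuses machinery it establishes anyway. One small superfluity: you do not actually need $\bFp v_0=\sigma^{I_1}$ to be $I$-stable, only that $I_1\cap U^+\subset I_1$ fixes $v_0$, which is immediate.
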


On renvoie le lecteur \`a \cite[\S6]{Pa} ou \cite[\S5]{Br2} pour la notion ``enveloppe injective''. 

\begin{proof}
(i) Il d\'ecoule de la formule suivante o\`u $a,b\in\cO$: 
\[\matr1{\varpi^nb}01\matr{\varpi^n}{a}01=\matr{\varpi^{n}}{a}01\matr{1}b01.\]

(ii)  La proposition \ref{prop-1-dim} implique que l'espace des $I_1\cap U^+$-invariants de $M_n^+(\sigma)$ est de dimension 1. D'autre part, $M_n^+(\sigma)$ est de dimension $q^n$ dont une base est form\'ee des vecteurs $\{[\smatr{\varpi^n}{b}01,v]\}$ avec $b\in\cO$ parmi un syst\`eme de repr\'esentants de $\cO/\p^n$.   L'\'enonc\'e s'en d\'eduit d'apr\`es \cite[\S5, corollaire 4]{Al}, car le groupe $I_1\cap U^+/I_{n+1}\cap U^+$,  \'etant isomorphe \`a $\cO/\p^n$,  est un $p$-groupe abelien d'ordre $q^n$.  
\end{proof}\vv

Dans la suite, soit $\pi$ une repr\'esentation lisse irr\'eductible de $G$ isomorphe \`a une s\'erie sp\'eciale ou \`a une s\'erie principale.  
D'apr\`es \cite[th\'eor\`eme 33]{BL2}, $\pi$ admet toujours une sous-$KZ$-repr\'esentation irr\'eductible $\sigma$ de dimension $\geq 2$ sur $\bFp$. Soit $\cInd_{KZ}^G\sigma\twoheadrightarrow \pi$ la surjection $G$-\'equivariante induite. Alors \cite[th\'eor\`eme 34]{BL2} impliqu'elle se factorise par $\cInd_{KZ}^G\sigma/(T-\lambda)$ avec $\lambda\in\bFp^{\times}$.  

Soient $v_0\in\sigma$ un vecteur non nul fix\'e par $I_1$ et $\langle P^+\cdot v_0\rangle$ le sous-espace vectoriel de $\pi$ engendr\'e par $v_0$ qui est $IZ$-stable  par le lemme \ref{lemma-IP=PI}. On en d\'eduit  les morphismes naturels $IZ$-\'equivariants:
  \[\iota_n:M_n^+(\sigma)\ra \langle P^+\cdot v_0\rangle.\]
Le lemme \ref{lemma-P+v=inj} suivant s'est inspir\'e de la preuve de \cite[proposition 11.1]{Pa2}.  Il sera utilis\'e au \S\ref{subsubsection-coro-cas(ii)}.

\begin{lem}\label{lemma-P+v=inj}
(i) Pour tout $n\geq 0$, $\iota_n$ est injectif.

(ii) Pour tout $n\geq 0$, on a $\im(\iota_n)\subseteq\im(\iota_{n+1})$.

(iii) En tant que repr\'esentation de $I_1\cap U^+$, $\langle P^+\cdot v_0\rangle$ est une enveloppe injective de la repr\'esentation triviale  dans la cat\'egorie $\Rep_{I_1\cap U^+}$.
\end{lem}
\begin{proof}
(i) Par le corollaire \ref{corollaire-M_n+}, l'espace des $I_1\cap U^+$-invariants de $M_n^+(\sigma)$ est de dimension 1 engendr\'e par le vecteur $S^n[\id,v_0]$, ce qui fait que, si $\iota_n$ n'est pas injectif, alors l'image de $S^n[\id,v_0]$ est nulle dans $\pi$. Or, c'est absurde parce que
\[S^n[\id,v_0]=T^n[\id,v_0]=\lambda^n[\id,v_0] \neq0 \mod (T-\lambda)\]
dont la premi\`ere \'egalit\'e vient du lemme \ref{lemma-Sv=Tv}(ii).

(ii) Compte tenu de (i), il suffit de v\'erifier que l'op\'erateur $T$ induit une injection $M_n^+(\sigma)\hookrightarrow M_{n+1}^+(\sigma)$ pour tout $n\geq 0$. Comme $\sigma$ est suppos\'e de dimension $\geq 2$, le cas o\`u $n=0$ r\'esulte du lemme \ref{lemma-Sv=Tv}(ii).  Supposons  $n\geq 1$. D'apr\`es la formule \cite[\S2.5, (5)]{Br1} de $T$, on est ramen\'e \`a v\'erifier  que  
\[\matr0110\varphi\Bigl(\matr100{\varpi^{-1}}\Bigr)\matr011{-[\lambda]}v_0\in\bFp v_0,\ \ \forall \lambda\in\F_q\]
et que 
\[\varphi\Bigl(\matr{1}00{\varpi^{-1}}\Bigr)v_0=0.\]
En effet, le premier \'enonc\'e d\'ecoule de \cite[lemme 3.1.1]{Br1} et le deuxi\`eme est une cons\'equence des faits que $v_0\in\sigma^{I_1}$ et que $\sigma$ est de dimension $\geq 2$.

(iii) Il d\'ecoule de (ii) et du corollaire \ref{corollaire-M_n+}(ii) en utilisant \cite[proposition 5.17]{Br2}.
\end{proof}


\section{Le diagramme canonique}\label{section-diag-canonique}

Dans ce chapitre, apr\`es des pr\'eliminaires au
\S\ref{subsection-preliminaire}, on d\'efinit au
\S\ref{subsection-definition} le diagramme canonique
associ\'e \`a une repr\'esentation lisse irr\'eductible de $G$ et on
d\'emontre que ce diagramme d\'etermine la classe d'isomorphisme de la repr\'esentation de d\'epart. Au \S\ref{subsection-quotient}, on discute le diagramme canonique associ\'e \`a un quotient non trivial de $\cInd_{KZ}^G\sigma$ avec $\sigma$ un poids fix\'e.


\subsection{Pr\'eliminaires}\label{subsection-preliminaire}
\subsubsection{Filtration sur un quotient non trivial de $\cInd_{KZ}^G\sigma$}

Fixons $\sigma$ un poids. Suivant \cite{Co},  on note $I^+(\sigma)$
le sous-espace vectoriel de $\cInd_{KZ}^G\sigma$ engendr\'e par les
$[g,\sigma]$ pour $g\in P^{+}=\smatr{\cO-\{0\}}{\cO}{0}{1}$.
De (\ref{equation-P+=union}), on voit que
\[I^{+}(\sigma)=\bigoplus_{n\in\N}\Bigl[\matr{\varpi^n}{\cO}{0}{1},\sigma\Bigr]=\bigoplus_{n\geq 0}R_n^+(\sigma).\]
On pose $I^-(\sigma):={\mathrm{\Pi}}\cdot I^+(\sigma)$ de sorte que
$I^-(\sigma)=\oplus_{n\geq 0}R_n^-(\sigma)$ et que
$\cInd_{KZ}^G\sigma=I^+(\sigma)\oplus I^-(\sigma)$ par la d\'ecomposition (\ref{equation-vigneras}). Les espaces
vectoriels $I^+(\sigma)$ et $I^-(\sigma)$ sont stables sous
l'action de $IZ$, et $\oplus_{n\geq
1}R_n(\sigma)$ est la sous-$KZ$-repr\'esentation de $\cInd_{KZ}^G\sigma$ engendr\'ee par
$I^-(\sigma)$ d'apr\`es le lemme \ref{lemma-decomposition}.

\vv

Dans la suite, on fixe $\pi$ un $G$-quotient non trivial (pas
forc\'ement irr\'eductible ni admissible) de $\cInd_{KZ}^G\sigma$ et on note $R(\sigma,\pi)$ le noyau correspondant. On
note $I^+(\sigma,\pi)$ (resp. $I^-(\sigma,\pi)$) l'image de
$I^+(\sigma)$ (resp. $I^-(\sigma)$) dans $\pi$. Ils sont stables par $IZ$. Pour un vecteur $f\in \cInd_{KZ}^G\sigma$, on note $\overline{f}$ l'image de $f$ dans $\pi$.

Il r\'esulte de la proposition
\ref{lemma-R_n<R_n+2}(ii) et de ce qui pr\'ec\`ede que $I^-(\sigma,\pi)$ engendre
enti\`erement l'espace $\pi$ sous l'action de $K$, de sorte que l'on obtient par
r\'eciprocit\'e de Frobenius une surjection
$K$-\'equivariante
\[
\Ind_I^KI^{-}(\sigma,\pi)\twoheadrightarrow \pi.\] On note
$W_1(\sigma,\pi)$ son noyau.
D'apr\`es le lemme \ref{lemma-intersection}, appliqu\'e \`a
$M=I^-(\sigma,\pi)$ et \`a $Q=\pi$, on
voit que $W_1(\sigma,\pi)$ s'identifie \`a une sous-$K$-repr\'esentation de
$\Ind_I^K(\sum_{n \geq 1}R_n^+(\sigma,\pi)\cap I^-(\sigma,\pi))$,
et donc de $\Ind_I^K(I^+(\sigma,\pi)\cap
I^-(\sigma,\pi))$. Autrement dit:
\begin{lem}\label{lemme-deschoices}
Pour toute \'egalit\'e dans $\pi$ de la forme
\[\summ_{\lambda\in\F_q}\matr{\varpi}{[\lambda]}01w_{\lambda}+{\mathrm{\Pi}}(w)=0\]
avec $w,w_{\lambda}\in I^+(\sigma,\pi)$, on a $w,w_{\lambda}\in
I^+(\sigma,\pi)\cap I^-(\sigma,\pi)$.
\end{lem}
\begin{proof}
Par ce qui pr\'ec\`ede, c'est une cons\'equence du lemme \ref{lemma-intersection} et de la d\'ecomposition (\ref{equation-decom-K/I}).
\end{proof}
\begin{rem}\label{remarque-deschoices}
On a une variante utile du lemme \ref{lemme-deschoices}: pour
toute \'egalit\'e dans $\pi$ de la forme
\[\summ_{i=0}^{q-1}\summ_{\lambda\in\F_{q}}\lambda^i\matr{\varpi}{[\lambda]}01w_i+{\mathrm{\Pi}}(w)=0\]
avec $w,w_i\in I^+(\sigma,\pi)$, on a $w,w_i\in
I^+(\sigma,\pi)\cap I^-(\sigma,\pi)$. 
\end{rem}
\begin{rem}
La raison principale pour laquelle on consid\`ere
$I^+(\sigma,\pi)\cap I^-(\sigma,\pi)$ plut\^ot que $\sum_{n\geq
1}R_n^+(\sigma,\pi)\cap I^-(\sigma,\pi)$ est que le premier est
stable sous l'action de ${\mathrm{\Pi}}$ (voir la d\'efinition
\ref{definition-diagcano} plus tard). Voir aussi le lemme
\ref{lemma-cK=intesection}.
\end{rem}


 Tout vecteur $v\in\pi$ peut s'\'ecrire sous la
forme $v=v^++v^-$ avec $v^+\in I^+(\sigma,\pi)$ et $v^-\in
I^-(\sigma,\pi)$. Une telle d\'ecomposition est unique \`a un vecteur dans $I^+(\sigma,\pi)\cap I^-(\sigma,\pi)$ pr\`es: si $v=v_1^++v_1^-$ est une autre d\'ecomposition, on
a
\[v^+-v_1^+=-v^- +v_1^-\in I^+(\sigma,\pi)\cap I^-(\sigma,\pi).\]

\vv

Posons $I^{+,0}(\sigma,\pi)=I^+(\sigma,\pi)\cap I^-(\sigma,\pi)$
et par r\'ecurrence,
\begin{equation}\label{equation-def-I+n}
I^{+,n}(\sigma,\pi)=I^+(\sigma,\pi)\cap \langle K\cdot
{\mathrm{\Pi}}(I^{+,n-1}(\sigma,\pi))\rangle.
\end{equation}
\'Evidemment, les $I^{+,n}(\sigma,\pi)$  sont des sous-espaces de $I^+(\sigma,\pi)$ stables par $IZ$.
\begin{lem}\label{lemma-I+,n}
Pour tout $n\geq 1$, on a

(i) $I^{+,n-1}(\sigma,\pi)\subseteq I^{+,n}(\sigma,\pi)$;

(ii) $\langle
K\cdot{\mathrm{\Pi}}(I^{+,n-1}(\sigma,\pi))\rangle={\mathrm{\Pi}}(I^{+,n-1}(\sigma,\pi))+
I^{+,n}(\sigma,\pi)$.
\end{lem}
\begin{proof}
(i) Par d\'efinition,
${\mathrm{\Pi}}(I^{+,0}(\sigma,\pi))=I^{+,0}(\sigma,\pi)$, donc
\[I^{+,1}(\sigma,\pi)=I^{+}(\sigma,\pi)\cap \langle K\cdot I^{+,0}(\sigma,\pi)\rangle\supseteq I^{+,0}(\sigma,\pi),\]
ce qui prouve l'\'enonc\'e dans le cas  o\`u $n=1$. Si $n\geq 2$, alors
par r\'ecurrence:
\[\begin{array}{rll}I^{+,n}(\sigma,\pi)&=&I^+(\sigma,\pi)\cap \langle
K\cdot {\mathrm{\Pi}}(I^{+,n-1}(\sigma,\pi))\rangle\\
&\supseteq& I^{+}(\sigma,\pi)\cap \langle
K\cdot{\mathrm{\Pi}}(I^{+,n-2}(\sigma,\pi))\rangle \\
&=& I^{+,n-1}(\sigma,\pi).
\end{array}\]

(ii) L'inclusion $\supseteq$ d\'ecoule de la d\'efinition
(\ref{equation-def-I+n}) et l'inclusion $\subseteq$  de la
d\'ecomposition (\ref{equation-decom-K/I}).
\end{proof}

On pose la
d\'efinition suivante:

\begin{defn}\label{definition-niveau}
Pour $v\in \pi$, on d\'efinit le \emph{niveau} de $v$ (relatif \`a
$\sigma$), not\'e $\ell_{\sigma}(v)$, comme suit:
\begin{itemize}
\item[(i)] supposons $v\in I^+(\sigma,\pi)$, on pose
\begin{itemize} \item[--] $\ell_{\sigma}(v):=0$  si $v\in
I^{+,0}(\sigma,\pi)$


\item[--] $\ell_{\sigma}(v):=n$ si $v\in I^{+,n}(\sigma,\pi)\backslash
I^{+,n-1}(\sigma,\pi)$;
\end{itemize}
\vspace{1mm}

\item[(ii)] supposons $v\in I^-(\sigma,\pi)$, on pose
$\ell_{\sigma}(v):=\ell_{\sigma}({\mathrm{\Pi}}(v))$;

\item[(iii)] enfin, pour $v=v^++v^-\in\pi$ avec $v^+\in
I^+(\sigma,\pi)$ et $v^-\in I^-(\sigma,\pi)$, on pose
$\ell_{\sigma}(v):=\max\{\ell_{\sigma}(v^+),\ell_{\sigma}(v^-)\}$.
\end{itemize}
\end{defn}

\begin{rem}\label{remark-sur-def-niveau}
Comme le groupe $G$ est engendr\'e par $K$ et ${\mathrm{\Pi}}$ et comme $I^+(\sigma,\pi)\cap
I^-(\sigma,\pi)$ contient un g\'en\'erateur de $\pi$ comme repr\'esentation de $G$ par la proposition \ref{lemma-R_n<R_n+2}(i), on a
$\ell_{\sigma}(v)<+\infty$ pour tout $v\in\pi$. D'autre part,
$\ell_{\sigma}(v)$ dans (iii) est bien d\'efini puisque
$\ell_{\sigma}(v^+)$ et $\ell_{\sigma}(v^-)$ ne d\'ependent que de $v$.
\end{rem}

Donnons des propri\'et\'es concernant $\ell_{\sigma}(\cdot)$.

\begin{lem}\label{lemma-niveau}
(i) Si $v_1,v_2\in \pi$, alors
$\ell_{\sigma}(v_1+v_2)\leq\max\{\ell_{\sigma}(v_1),\ell_{\sigma}(v_2)\}$;
si $\ell_{\sigma}(v_1)\neq \ell_{\sigma}(v_2)$, alors
$\ell_{\sigma}(v_1+v_2)=\max\{\ell_{\sigma}(v_1),\ell_{\sigma}(v_2)\}$.


(ii) Soit $v\in I^+(\sigma,\pi)$ un vecteur tel que $\ell_{\sigma}(v)=n\geq 1$. Alors $\ell_{\sigma}(Sv)=n+1$,  o\`u $S\in\bFp[G]$ est d\'efini par (\ref{equation-define-Sv}).

(iii) Soit $v\in I^-(\sigma,\pi)$ un vecteur tel que $\ell_{\sigma}(v)=n\geq 1$. Alors $\ell_{\sigma}(Sv)\leq n$ et
\[Sv\in I^{+,n}(\sigma,\pi)+\Pi(I^{+,n-1}(\sigma,\pi)).\]
\end{lem}
\begin{proof}
(i) C'est une cons\'equence triviale de la d\'efinition
\ref{definition-niveau}.

(ii) 
Par d\'efinition
$\ell_{\sigma}(Sv)\leq n+1$. Si $\ell_{\sigma}(Sv)\leq
n$, alors il existe des $w_i\in I^{+,n-1}(\sigma,\pi)$ pour $0\leq
i\leq q-1$ tels que
\[Sv=\summ_{\lambda\in\F_q}\matr{\varpi}{[\lambda]}01v=\summ_{i=0}^{q-1}\summ_{\lambda\in\F_{q}}\lambda^i\matr{\varpi}{[\lambda]}01w_i.\]
D'apr\`es la remarque \ref{remarque-deschoices}, cela entra\^ine que $v-w_0\in
I^+(\sigma,\pi)\cap I^-(\sigma,\pi)$, d'o\`u $\ell_{\sigma}(v)\leq
n-1$ par (i), ce qui donne une contradiction et l'\'enonc\'e s'en d\'eduit.

(iii) 
Comme $\Pi(v)\in I^+(\sigma,\pi)$, la condition $\ell_{\sigma}(v)=n$ implique que $\Pi(v)$ appartient \`a $I^{+,n}(\sigma,\pi)$, \emph{a fortiori}, \`a $\langle
K\cdot {\mathrm{\Pi}}(I^{+,n-1}(\sigma,\pi))\rangle$. Comme $Sv=\sum_{\lambda\in\F_q}\smatr{[\lambda]}110\Pi(v)$, on en d\'eduit l'appartenance de $Sv$ \`a $\langle
K\cdot {\mathrm{\Pi}}(I^{+,n-1}(\sigma,\pi))\rangle$, d'o\`u l'\'enonc\'e d'apr\`es le lemme \ref{lemma-I+,n}(ii).
\end{proof}

%
%
%


\begin{lem}\label{lemma-c_m=0}
Soit $v\in \pi$ un vecteur v\'erifiant une \'equation de la forme
\begin{equation}\label{equation-c_m=0}
c_0v+\summ_{n=1}^{m}c_nS^nv=0
\end{equation}
avec  $m\in\N_{\geq 1}$ et $(c_n)_{0\leq n\leq m}$ une famille d'\'el\'ements de $\bFp$ v\'erifiant $c_m\neq 0$.
Alors, en choisissant une d\'ecomposition $v=v^++v^-$ avec $v^+\in I^+(\sigma,\pi)$ et $v^-\in I^-(\sigma,\pi)$, on a   $\ell_{\sigma}(v)=\ell_{\sigma}(v^-)$. De plus, on a $\ell_{\sigma}(v^+)=\ell_{\sigma}(v^-)$ si et seulement si $\ell_{\sigma}(v)=0$.
\end{lem}
\begin{proof}
Supposons au contraire que $\ell_{\sigma}(v)=\ell_{\sigma}(v^+)>\ell_{\sigma}(v^-)$. En particulier, $\ell_{\sigma}(v^+)\geq 1$. D'apr\`es le lemme \ref{lemma-niveau}(ii), on a  pour tout $1\leq n\leq m$
\[\ell_{\sigma}(S^nv^+)=\ell_{\sigma}(v^+)+n,\]
ce qui donne une contradiction avec (\ref{equation-c_m=0}) si $\ell_{\sigma}(v^-)=0$, car on peut alors supposer $v^-=0$ et (\ref{equation-c_m=0}) devient $c_0v^++\sum_{n=1}^mc_nS^nv^+=0$. La derni\`ere conclusion dans ce cas est triviale.

Si $\ell_{\sigma}(v^-)\geq 1$, par le lemme \ref{lemma-niveau}(iii), on a $\ell_{\sigma}(Sv^-)\leq \ell_{\sigma}(v^-)$ et donc pour tout $1\leq n\leq m$:
\[\ell_{\sigma}(S^nv^-)=\ell_{\sigma}(S^{n-1}Sv^-)\leq \ell_{\sigma}(v^-)+n-1<\ell_{\sigma}(v^+)+m =\ell_{\sigma}(S^mv^+)\]
o\`u l'on a utilis\'e le fait que $\ell_{\sigma}(Sw)\leq \ell_{\sigma}(w)+1$ pour tout $w\in\pi$.
On obtient encore une contradiction avec (\ref{equation-c_m=0}) puisque $c_n\neq 0$ et que
\begin{equation}\label{equation-l(n)>others}\ell_{\sigma}(S^mv^+)>\max\{\ell_{\sigma}(S^kv^+),\ell_{\sigma}(S^nv^-)\}_{0\leq k\leq m-1,\ 0\leq n\leq m}.\end{equation}
Cela montre que $\ell_{\sigma}(v^+)\leq \ell_{\sigma}(v^-)$.
La derni\`ere conclusion se d\'eduit de ce qui pr\'ec\`ede, car si $\ell_{\sigma}(v^+)=\ell_{\sigma}(v^-)\geq 1$ on aurait encore (\ref{equation-l(n)>others}) et donc une contradiction.
\end{proof}
\begin{cor}\label{coro-adm-I1-in-D1}
Si $\pi$ est admissible, on a $\pi^{I_1}\subseteq I^+(\sigma,\pi)\cap I^-(\sigma,\pi)$.
\end{cor}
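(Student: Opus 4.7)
L'id\'ee de la preuve est la suivante. Soit $v\in\pi^{I_1}$; on peut supposer $v\neq 0$, le cas $v=0$ \'etant trivial. D'apr\`es le lemme \ref{lemma-H-invariant}(iv), on a $S^nv\in\pi^{I_1}$ pour tout $n\geq 0$, et puisque $\pi$ est admissible, $\pi^{I_1}$ est de dimension finie sur $\bFp$. La famille $(S^nv)_{n\geq 0}$ est donc li\'ee, ce qui fournirait une relation non triviale
\[c_0v+\summ_{n=1}^{m}c_nS^nv=0\]
avec $c_m\neq 0$ et n\'ecessairement $m\geq 1$ (le cas $m=0$ donnerait $v=0$). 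Le lemme \ref{lemma-c_m=0}, appliqu\'e \`a cette relation, donnerait alors $\ell_{\sigma}(v)=\ell_{\sigma}(v^-)$, autrement dit $\ell_{\sigma}(v^+)\leq\ell_{\sigma}(v^-)$, pour toute d\'ecomposition $v=v^++v^-$ avec $v^{+}\in I^{+}(\sigma,\pi)$ et $v^{-}\in I^{-}(\sigma,\pi)$.

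Pour obtenir l'in\'egalit\'e r\'eciproque, on exploiterait la sym\'etrie donn\'ee par $\Pi$. Puisque $N$ normalise $I$, donc aussi $I_1$ (son pro-$p$-sous-groupe caract\'eristique), on a $\Pi v\in\pi^{I_1}$; de plus $\Pi v\neq 0$ car $\Pi^2=\varpi\cdot\id$ agit par le scalaire non nul $\chi_{\pi}(\varpi)$. En r\'ep\'etant l'argument pr\'ec\'edent avec $\Pi v$ au lieu de $v$, on obtiendrait une relation non triviale avec $m'\geq 1$, puis $\ell_{\sigma}((\Pi v)^+)\leq\ell_{\sigma}((\Pi v)^-)$ par le lemme \ref{lemma-c_m=0}. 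Or une d\'ecomposition admissible de $\Pi v$ est donn\'ee par $\Pi v=\Pi v^-+\Pi v^+$: en effet, $\Pi v^+\in I^-(\sigma,\pi)$ par d\'efinition, tandis que $\Pi v^-\in\Pi^2 I^+(\sigma,\pi)=I^+(\sigma,\pi)$ via l'action scalaire de $\Pi^2$. En utilisant la d\'efinition \ref{definition-niveau}(ii) du niveau sur $I^-(\sigma,\pi)$, on calculerait
\[\ell_{\sigma}((\Pi v)^+)=\ell_{\sigma}(\Pi v^-)=\ell_{\sigma}(v^-),\ \ \ell_{\sigma}((\Pi v)^-)=\ell_{\sigma}(\Pi v^+)=\ell_{\sigma}(\Pi^2v^+)=\ell_{\sigma}(v^+),\]
d'o\`u $\ell_{\sigma}(v^-)\leq\ell_{\sigma}(v^+)$.

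En combinant, $\ell_{\sigma}(v^+)=\ell_{\sigma}(v^-)$, et la derni\`ere assertion du lemme \ref{lemma-c_m=0} forcerait $\ell_{\sigma}(v)=0$, ce qui signifie pr\'ecis\'ement que $v\in I^+(\sigma,\pi)\cap I^-(\sigma,\pi)$. La seule subtilit\'e consiste \`a remarquer qu'une seule application du lemme \ref{lemma-c_m=0} ne donne qu'une des deux in\'egalit\'es entre $\ell_{\sigma}(v^+)$ et $\ell_{\sigma}(v^-)$, et qu'il faut invoquer la sym\'etrie $v\mapsto\Pi v$ (donc le fait que $\Pi$ normalise $I_1$, et que le caract\`ere central rend $\Pi$ inversible modulo la filtration) pour acc\'eder \`a l'in\'egalit\'e r\'eciproque.
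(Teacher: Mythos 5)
Your proof is correct and follows essentially the same route as the paper's: extract a linear dependence $c_0v+\sum_{n\geq 1}c_nS^nv=0$ from admissibility of $\pi^{I_1}$ (using lemme \ref{lemma-H-invariant}(iv)), invoke le lemme \ref{lemma-c_m=0} to get $\ell_\sigma(v^+)\le\ell_\sigma(v^-)$, then apply the same argument to $\Pi(v)$ with the decomposition $\Pi(v^-)+\Pi(v^+)$ to obtain the reverse inequality, and conclude by the last clause of lemme \ref{lemma-c_m=0}. You merely spell out a few steps the paper leaves implicit (why $m\ge 1$, why $\Pi v\in\pi^{I_1}$, the level identities $\ell_\sigma(\Pi v^\pm)=\ell_\sigma(v^\pm)$ via d\'efinition \ref{definition-niveau}(ii) and the scalar action of $\Pi^2$).
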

\begin{proof}
Soit $v\in \pi^{I_1}$ un vecteur non nul. Alors il en est de m\^eme de $S^nv$ pour tout $n\geq 1$  par le lemme \ref{lemma-H-invariant}(iv). L'admissibilit\'e de $\pi$ implique qu'il existe
une famille $(c_n)_{0\leq n\leq m}$ d'\'el\'ements de $\bFp$ avec $c_m\neq 0$ v\'erifiant l'\'equation (\ref{equation-c_m=0}).
En choisissant une d\'ecomposition $v=v^++v^-$ avec $v^+\in I^+(\sigma,\pi)$ et $v^-\in I^-(\sigma,\pi)$, on obtient par le lemme \ref{lemma-c_m=0} que $\ell_{\sigma}(v^+)\leq \ell_{\sigma}(v^-)$.
De m\^eme, en appliquant le m\^eme argument \`a $\Pi(v)\in\pi^{I_1}$ qui se d\'ecompose sous la forme $\Pi(v^-)+\Pi(v^+)$ avec $\Pi(v^-)\in I^+(\sigma,\pi)$ et $\Pi(v^+)\in I^-(\sigma,\pi)$, on voit que \[\ell_{\sigma}(v^-)=\ell_{\sigma}(\Pi v^-)\leq \ell_{\sigma}(\Pi v^+)=\ell_{\sigma}(v^+)\]
et donc $\ell_{\sigma}(v^+)=\ell_{\sigma}(v^-)$.  Le corollaire s'en d\'eduit en utilisant \`a nouveau le lemme \ref{lemma-c_m=0}.
\end{proof}
\begin{rem}
L'admissibilit\'e de $\pi$ est cruciale dans le corollaire \ref{coro-adm-I1-in-D1}. En fait, on verra plus tard (proposition \ref{prop-cal-D1}) que l'espace $I^+(\sigma,\cInd_{KZ}^G\sigma/T)\cap I^-(\sigma,\cInd_{KZ}^G\sigma/T)$ est de dimension finie o\`u $T$ est l'op\'erateur de Hecke d\'efini au \S\ref{subsection-Hecke}, tandis que l'espace des $I_1$-invariants de $\cInd_{KZ}^G\sigma/T$ est de dimension infinie d\`es que $F\neq \Q_p$ (voir \cite[remarque 4.2.6]{Br1}).
Malgr\'e tout, on a un \'enonc\'e analogue si $\pi$ est irr\'eductible (proposition \ref{prop-I1-invariant-in-D1}).
\end{rem}





\subsubsection{Techniques pour calculer $I^+(\sigma,\pi)\cap I^-(\sigma,\pi)$}\label{subsubsection-techniques}

Maintenant, on introduit un moyen qui permet de calculer explicitement $I^+(\sigma,\pi)\cap I^-(\sigma,\pi)$ dans certains cas.\vv

Consid\'erons le
morphisme compos\'e $IZ$-\'equivariant
\begin{equation}\label{equation-define-Phi}\Phi_{\sigma}=\Phi_{\sigma,\pi}:\cInd_{KZ}^G\sigma\twoheadrightarrow I^-(\sigma)
\twoheadrightarrow I^-(\sigma,\pi)\hookrightarrow \pi.\end{equation}
Plus pr\'ecis\'ement, si $f=f^++f^-\in \cInd_{KZ}^G\sigma$ avec $f^+\in I^+(\sigma)$ et $f^-\in I^-(\sigma)$, alors $\Phi_{\sigma}(f):=\overline{f^-}$ est l'image de $f^-$ dans $\pi$. \'Evidemment, $\Phi_\sigma$ est un morphisme $IZ$-\'equivariant. 
Rappelons que l'on a d\'efini  l'espace $R(\sigma,\pi)$ comme le noyau de l'application naturelle
$\cInd_{KZ}^G\sigma\twoheadrightarrow \pi$.
\begin{lem}\label{lemma-cK=intesection}
L'espace $I^+(\sigma,\pi)\cap I^-(\sigma,\pi)$ est l'image de
$R({\sigma},\pi)$ via $\Phi_{\sigma}$. 
\end{lem}
\begin{proof}
%
 Si $f=f^++f^-\in R(\sigma,\pi)$ avec $f^+\in I^+(\sigma)$ et $f^-\in
I^-(\sigma)$, alors
par d\'efinition,
\[\Phi_{\sigma}(f)=\overline{f^-}=-\overline{f^+}\in I^+(\sigma,\pi)\cap I^-(\sigma,\pi).\]
R\'eciproquement, soit $\overline{f}\in I^+(\sigma,\pi)\cap
I^-(\sigma,\pi)$, et $f^+$ (resp. $f^-$) un rel\`evement de $\overline{f}$ dans $I^+(\sigma)$
(resp. $I^-(\sigma)$), alors $f:=-f^++f^-$ est un \'el\'ement dans
$R(\sigma,\pi)$ v\'erifiant $\Phi_{\sigma}(f)=\overline{f}$.
\end{proof}
\begin{lem}\label{lemma-Phi(Pi)=-Pi(Phi)}
On a $\Phi_{\sigma}(\Pi(f))=-\Pi(\Phi_{\sigma}(f))$ pour $f\in R(\sigma,\pi)$.
\end{lem}
\begin{proof}
Si l'on \'ecrit $f=f^++f^-$ avec $f\in I^+(\sigma)$ et $f^-\in I^-(\sigma)$, alors $\Pi(f)=\Pi(f^-)+\Pi(f^+)$ avec $\Pi(f^-)\in I^+(\sigma)$ et $\Pi(f^+)\in I^-(\sigma)$ de sorte que
\[\Phi_{\sigma}(\Pi(f))=\overline{\Pi(f^+)}=\Pi(\overline{f^+})=-\Pi(\overline{f^-})=-\Pi(\Phi_{\sigma}(f))\]
dont la troisi\`eme \'egalit\'e r\'esulte du fait que $\overline{f^+}+\overline{f^-}=0$ en notant que $f\in R(\sigma,\pi)$.
\end{proof}


Soit $f=f^++f^-\in \cInd_{KZ}^G\sigma$ un vecteur avec $f^+\in I^+(\sigma)$ et $f^-\in I^-(\sigma)$. Comme $\Pi(f^-)\in I^+(\sigma)=R_0^+(\sigma)\oplus(\oplus_{n\geq 1}R_n^+(\sigma))$, la d\'ecomposition (\ref{equation-R_n(sigma)=summ}) donne  l'\'ecriture suivante de $\Pi(f^-)$:
\begin{equation}\label{equation-ecrire-Pi(f-)}\Pi(f^-)=y+\summ_{\mu\in\F_q}\matr{[\mu]}110x_{\mu}\end{equation}
avec $y\in R_0^+(\sigma)$ et $x_{\mu}\in I^-(\sigma)$. De plus, on a  $x_{\mu}\in \oplus_{0\leq n\leq m-1}R_n^-(\sigma)$ si $f^-\in\oplus_{0\leq n\leq m}R_n^-(\sigma)$.

\begin{lem}\label{lemma-cK-et-D1(pi)}
Avec les notations pr\'ec\'edentes, on a pour tout $\lambda\in\F_q$:
\[\Phi_{\sigma}\biggl(\matr{\varpi}{[\lambda]}01f\biggr)=\matr1{[\lambda]}01\overline{x}_0.\]
\end{lem}

\begin{proof}
On calcule en utilisant (\ref{equation-ecrire-Pi(f-)}):
\[\begin{array}{rll}\matr {\varpi}{[\lambda]}01f&=&
\matr {\varpi}{[\lambda]}01f^++\matr{[\lambda]}110{\mathrm{\Pi}} (f^-)\\
&=&\matr {\varpi}{[\lambda]}01f^++\matr{[\lambda]}110 y+\matr{1}{[\lambda]}01 x_0 \\
&& +\summ_{\mu\in\F_q^{\times}}\matr{[\lambda]+[\mu^{-1}]}{1}10
\matr{[\mu]}00{-[\mu^{-1}]}\matr{1}{[\mu^{-1}]}01
x_{\mu}\\
&\in&\matr{1}{[\lambda]}01 x_0+I^+(\sigma),
\end{array}\]
et la conclusion s'en d\'eduit par la d\'efinition de $\Phi_\sigma$ (\ref{equation-define-Phi}).
\end{proof}
\subsubsection{Exemples} 
On donne des exemples illustratant le calcul de l'espace $I^+(\sigma,\pi)\cap I^-(\sigma,\pi)$  en utilisant les techniques au \S\ref{subsubsection-techniques}.
Le cas le plus simple est celui o\`u $\pi=\cInd_{KZ}^G\sigma/P(T)$ avec $P(T)\in\bFp[T]$ un polyn\^ome non constant, o\`u $T$ est l'op\'erateur de Hecke d\'efini au \S\ref{subsubsection-T}.

Traitons d'abord le cas o\`u $P(T)$ est de degr\'e 1, c'est-\`a-dire,  on suppose que
\begin{equation}\label{equation-define-V()}
\pi=V(\sigma,\lambda):=\cInd_{KZ}^G\sigma/(T-\lambda)
\end{equation}
 avec $\lambda\in\bFp$. Si
$f\in\cInd_{KZ}^G\sigma$, on  d\'esigne par $\overline{f}$ son image
dans $\pi$. Soit $v_0\in\sigma$ un vecteur
non nul fix\'e par $I_1$.

\begin{prop}\label{prop-cal-D1}
Avec les notations pr\'ec\'edentes, on a
\[I^+(\sigma,V(\sigma,\lambda))\cap I^-(\sigma,V(\sigma,\lambda))=\bFp \overline{[\mathrm{Id},v_0]}
\oplus \bFp\overline{[{\mathrm{\Pi}},v_0]}.\]
\end{prop}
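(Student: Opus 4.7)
The plan is to prove the two inclusions of the claimed equality separately. For $\supseteq$, note that $\overline{[\id,v_0]}\in I^+(\sigma,V(\sigma,\lambda))$ and $\overline{[\Pi,v_0]}\in I^-(\sigma,V(\sigma,\lambda))$ are tautological. To see $\overline{[\Pi,v_0]}\in I^+(\sigma,V(\sigma,\lambda))$, use the vanishing $(T-\lambda)[\id,v_0]=0$ in $V(\sigma,\lambda)$: if $\dim\sigma=1$, Lemma \ref{lemma-Sv=Tv}(i) directly rewrites this as $\overline{[\Pi,v_0]}=\lambda\overline{[\id,v_0]}-\overline{S[\id,v_0]}$, manifestly in $I^+$. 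If $\dim\sigma\geq 2$, $(T-\lambda)[\id,v_0]$ lies entirely in $I^+(\sigma)$, so one applies $s=\smatr{0}{1}{1}{0}$ to the relation: direct computation gives $sg_0=\Pi$ and $sg_\mu\in P^+KZ$ for $\mu\neq 0$, yielding $\overline{[\Pi,v_0]}$ as a combination of $I^+$-elements. Applying $\Pi$ and using $\Pi^2\in Z$ then gives $\chi_\sigma(\varpi)\overline{[\id,v_0]}\in I^-$. Both classes are nonzero (since $V(\sigma,\lambda)\neq 0$ is generated by $\overline{[\id,v_0]}$) and are $\cH$-eigenvectors with characters interchanged by $\Pi$-conjugation, hence linearly independent.

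For $\subseteq$, by Lemma \ref{lemma-cK=intesection} the intersection equals $\Phi_\sigma(R(\sigma,V(\sigma,\lambda)))=\Phi_\sigma((T-\lambda)\cInd_{KZ}^G\sigma)$, which is spanned by the vectors $\Phi_\sigma((T-\lambda)[g,w])$ for $g\in G$ and $w\in\sigma$ (since $R$ is $G$-generated by $(T-\lambda)[\id,v_0]$). The key technical input is that the operator $\varphi(\Pi^{-1})$ appearing in the Hecke formula $T[\id,w]=\sum_\mu[g_\mu,\varphi(g_\mu^{-1})w]+[\Pi,\varphi(\Pi^{-1})w]$ has image contained in $\sigma^{I_1}=\bFp v_0$. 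This is deduced from the equivariance $\sigma(h)\varphi(\Pi^{-1})=\varphi(\Pi^{-1})\sigma(\Pi h\Pi^{-1})$ applied to $h$ in the unipotent part of $I_1\cap K_1$: since $\sigma(K_1)$ is trivial whereas $\Pi h\Pi^{-1}$ ranges through the upper unipotent of $I$ modulo $K_1$, the image of $\varphi(\Pi^{-1})$ must be upper-unipotent-invariant, hence $=\bFp v_0$. Consequently $\Phi_\sigma((T-\lambda)[\id,w])=\overline{[\Pi,\varphi(\Pi^{-1})w]}\in\bFp\overline{[\Pi,v_0]}$ for every $w$.

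For $g\in P^+$ of length $n\geq 1$, written $g=g_{\mu_n}\cdots g_{\mu_1}$, the direct computation $g_{\mu_1}\Pi\in KZ$ (which shows that $g\Pi\in P^+KZ$ of length $n-1$) together with $P^+\cdot P^+\subseteq P^+$ forces $(T-\lambda)[g,w]\in I^+(\sigma)$, so $\Phi_\sigma=0$. For $g=\Pi p$ with $p\in P^+$ of length $\geq 1$, $(T-\lambda)[g,w]\in I^-(\sigma)\cap R$, whose image in $V(\sigma,\lambda)$ is zero. For $g=\Pi$, Lemma \ref{lemma-Phi(Pi)=-Pi(Phi)} combined with $\Pi[\Pi,v_0]=\chi_\sigma(\varpi)[\id,v_0]$ gives $\Phi_\sigma(\Pi(T-\lambda)[\id,w])\in\bFp\overline{[\id,v_0]}$. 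Since any $g\in G$ is in $rKZ$ for $r$ one of these representatives, and $kf=k(T-\lambda)[\id,w]=(T-\lambda)[\id,kw]$ for $k\in KZ$, the formula reduces to the listed cases (with $v_0$ replaced by $kw\in\sigma$). This establishes $\Phi_\sigma(R)\subseteq\bFp\overline{[\id,v_0]}+\bFp\overline{[\Pi,v_0]}$, and hence the desired equality. The main obstacle is the key claim on the image of $\varphi(\Pi^{-1})$: it hinges on the nontrivial interaction between the triviality of $\sigma$ on $K_1$ and the $\Pi$-conjugation sending the lower unipotent of $I_1\cap K_1$ to the upper unipotent of $I$; the length-filtration bookkeeping for the remaining cases is then routine.
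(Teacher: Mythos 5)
Your proposal follows the same overall route as the paper: reduce $\subseteq$ via Lemma \ref{lemma-cK=intesection} to computing $\Phi_\sigma((T-\lambda)[g,w])$, use Lemma \ref{lemma-Phi(Pi)=-Pi(Phi)} to reduce to $g\in P^+KZ$, and split into cases by length. Your length-$\geq 1$ argument ($(T-\lambda)[g,w]\in I^+(\sigma)$, hence $\Phi_\sigma=0$) is a clean self-contained substitute for the paper's iteration of Lemma \ref{lemma-cK-et-D1(pi)}, and your $\supseteq$ direction is a direct computation rather than an appeal to Proposition \ref{lemma-R_n<R_n+2}(i); both are valid alternatives. However, there are two genuine local gaps.

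First, the justification of the key claim $\varphi(\Pi^{-1})\sigma\subseteq\bFp v_0$ is run in the wrong direction. You take $h$ in the lower unipotent of $K_1$, so that $\Pi h\Pi^{-1}$ sweeps out $\smatr{1}{\cO}{0}{1}$; since $\sigma(h)=\id$, the identity $\sigma(h)\varphi(\Pi^{-1})=\varphi(\Pi^{-1})\sigma(\Pi h\Pi^{-1})$ then reads $\varphi(\Pi^{-1})=\varphi(\Pi^{-1})\sigma(u)$. This shows $\varphi(\Pi^{-1})$ factors through the coinvariants of the upper unipotent (a rank bound), \emph{not} that its image is $I_1\cap U^+$-invariant. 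To get image-invariance, which is what you actually use, you must take $h\in I_1\cap U^+=\smatr{1}{\cO}{0}{1}$ (not in $K_1$!), compute $\Pi h\Pi^{-1}=\smatr{1}{0}{\varpi\cO}{1}\subset K_1$, and deduce $\sigma(h)\varphi(\Pi^{-1})=\varphi(\Pi^{-1})$ since now the right-hand side acts trivially; then the image lands in $\sigma^{I_1\cap U^+}=\sigma^{I_1}=\bFp v_0$. The conclusion you want is correct, but the argument as written does not prove it.

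Second, the argument that $\overline{[\id,v_0]}$ and $\overline{[\Pi,v_0]}$ are linearly independent because their $\cH$-eigencharacters are ``interchanged by $\Pi$-conjugation'' is incomplete: conjugation by $\Pi$ swaps the diagonal characters, and when this swap fixes the character (for instance when $\sigma$ is a character, or Steinberg-type with all partial weights equal to $p-1$), both vectors have the \emph{same} $\cH$-eigencharacter and the argument gives nothing. The paper instead invokes a support argument as in [Br1, th\'eor\`eme 3.2.4]: one checks on representatives that $[\Pi,v_0]\notin\bFp[\id,v_0]+(T-\lambda)(\cInd_{KZ}^G\sigma)$. You would need that or a comparable argument to close this case.
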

\begin{proof}
Commen\c{c}ons par remarquer que $\bFp\overline{[\id,v_0]}\cap \bFp\overline{[\Pi,v_0]}=0$, i.e.
\[[\Pi,v_0]\notin \bFp[\id,v_0]+(T-\lambda)(\cInd_{KZ}^G\sigma).\]
Ce fait peut se voir en examinant les supports comme dans la preuve de \cite[th\'eor\`eme 3.2.4]{Br1}.

L'inclusion $\supseteq$ r\'esulte de la proposition
\ref{lemma-R_n<R_n+2}(i) et du fait que le membre de gauche est stable par $\Pi$.
Pour l'autre inclusion, on utilise le lemme
\ref{lemma-cK=intesection} qui permet d'identifier
$I^+(\sigma,V(\sigma,\lambda))\cap
I^-(\sigma,V(\sigma,\lambda))$ avec l'image de
$(T-\lambda)(\cInd_{KZ}^G\sigma)$ via $\Phi_{\sigma}$ d\'efini par (\ref{equation-define-Phi}). Autrement dit, il faut v\'erifier que: quel que soit $g\in G$,
\[\Phi_{\sigma}\bigl((T-\lambda)([g,v_0])\bigr)\in \bFp \overline{[\mathrm{Id},v_0]}
\oplus \bFp\overline{[{\mathrm{\Pi}},v_0]}.\]
D'apr\`es le lemme \ref{lemma-Phi(Pi)=-Pi(Phi)}, on peut supposer $g\in P^+KZ$, ce qui fait que $g$ s'\'ecrit  sous la forme comme dans (\ref{equation-g=g^i-vigneras}):
\[g=g^{(n)}g^{(n-1)}\cdots g^{(1)}k.\]

En utilisant le lemme \ref{lemma-Sv=Tv} et la $G$-\'equivariance de $T$, on d\'eduit que
\[(T-\lambda)([\mathrm{Id},kv_0])=k\cdot(T-\lambda)([\id,v_0])\in \langle K\cdot[\Pi,v_0]\rangle.\]
Par la d\'ecomposition (\ref{equation-decom-K/I}) et la d\'efinition de $\Phi_{\sigma}$, cela implique que \[\Phi_{\sigma}\bigl((T-\lambda)([\mathrm{Id},kv_0])\bigr)\in\bFp\overline{[\Pi,v_0]},\]
d'o\`u l'\'enonc\'e dans le cas particulier o\`u $g=k\in KZ$ (i.e. $n=0$).
Traitons le cas o\`u $n\geq 1$. Comme $f^-\in R_0^-(\sigma)$, le $x_0$ d\'efini dans l'\'ecriture (\ref{equation-ecrire-Pi(f-)}) r\'eduit \`a $0$, donc
le lemme \ref{lemma-cK-et-D1(pi)} 
implique que
\[\Phi_{\sigma}\bigr((T-\lambda)([g^{(1)}k,v_0])\bigl)=0.\]
Le m\^eme raisonnement donne $\Phi_{\sigma}\bigl((T-\lambda)([g,v_0])\bigr)=0$, et le r\'esultat s'en d\'eduit.
\end{proof}



\begin{prop}\label{prop-I(sigma)/P(T)}
Supposons que $\pi=\cInd_{KZ}^G\sigma/P(T)$ avec $P(T)\in\bFp[T]$ un polyn\^ome de degr\'e $k\geq 1$. Alors $I^+(\sigma,\pi)\cap I^-(\sigma,\pi)\subseteq \pi^{I_1}$.
\end{prop}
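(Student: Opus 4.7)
My plan proceeds as follows. By Lemma~\ref{lemma-cK=intesection}, every element of $I^+(\sigma,\pi)\cap I^-(\sigma,\pi)$ has the form $\Phi_\sigma(P(T)(f))$ for some $f\in\cInd_{KZ}^G\sigma$, so I must show each such image lies in $\pi^{I_1}$. The idea is to identify an explicit $I_1$-fixed target subspace $V\subseteq\pi^{I_1}$ that contains all these images. Since $v_0\in\sigma^{I_1}$ and both $\id$ and $\Pi\in N$ normalize $I_1$, the vectors $[\id,v_0]$ and $[\Pi,v_0]$ are $I_1$-fixed in $\cInd_{KZ}^G\sigma$; the $G$-equivariance of $T$ then makes $T^i[\id,v_0]$ and $T^i[\Pi,v_0]$ (for $i\geq 0$) also $I_1$-fixed, so their images in $\pi$ span an $I_1$-fixed subspace $V$ of $\pi^{I_1}$ (finite-dimensional, since $P(T)$ acts as zero on $\pi$).

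Since $\sigma$ is irreducible with $v_0\neq 0$, $\sigma=\langle K\cdot v_0\rangle$, and hence $\cInd_{KZ}^G\sigma$ is spanned over $\bFp$ by $\{[g,v_0]:g\in G\}$; by the $G$-equivariance of $P(T)$ it suffices to show $\Phi_\sigma(g\cdot P(T)[\id,v_0])\in V$ for each $g\in G$. Using Lemma~\ref{lemma-Phi(Pi)=-Pi(Phi)} together with the fact that $\Pi\in N$ preserves $I_1$-fixedness, the decomposition (\ref{equation-vigneras}) $G=P^+KZ\sqcup\Pi P^+KZ$ reduces the problem to $g\in P^+KZ$. Writing $g=g^{(n)}\cdots g^{(1)}k$ via (\ref{equation-g=g^i-vigneras}) and absorbing $k[\id,v_0]=[\id,kv_0]$ for $k\in KZ$, I am left to analyse $\Phi_\sigma(g^{(n)}\cdots g^{(1)}\cdot P(T)[\id,w])\in\pi$ for arbitrary $w\in\sigma$, which I would handle by induction on the length $n$.

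For the base case $n=0$, I expand $P(T)=\sum_j c_j T^j$ and apply Lemma~\ref{lemma-Sv=Tv}. When $\dim\sigma\geq 2$, $T[\id,v_0]=S[\id,v_0]\in I^+(\sigma)$, so $P(T)[\id,v_0]\in I^+(\sigma)$ has trivial $I^-$-part; when $\dim\sigma=1$, $T[\id,v_0]=[\Pi,v_0]+S[\id,v_0]$ produces an extra $[\Pi,v_0]$-summand whose propagation through successive applications of $T$ must be tracked using the relations $\Pi T=T\Pi$ and $\Pi^2\in Z$, together with the vanishing $\overline{P(T)[\id,v_0]}=0$ in $\pi$, to rewrite the resulting $I^-$-components as linear combinations of the $\overline{T^i[\id,v_0]}$ and $\overline{T^i[\Pi,v_0]}$, hence in $V$. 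The general $w=kv_0$ is then handled by splitting $k\in KZ$ according to the Bruhat-type decomposition (\ref{equation-decom-K/I}) into the parts in $IZ$ and in $sIZ$. For the inductive step, Lemma~\ref{lemma-cK-et-D1(pi)} rewrites $\Phi_\sigma(g^{(n)}\cdots g^{(1)}\cdot P(T)[\id,w])=\smatr{1}{[\lambda_n]}{0}{1}\overline{x_0}$ with $x_0$ arising from the length-$(n-1)$ computation; since $\smatr{1}{[\lambda_n]}{0}{1}\in I_1$ acts trivially on $\pi^{I_1}\supseteq V$, the induction hypothesis places $\overline{x_0}$ and hence the whole expression in $V$.

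The main technical obstacle is the combinatorial bookkeeping in the $\dim\sigma=1$ base case and the $K/IZ$-reduction (which involves the nontrivial Weyl element $s$): one must systematically exploit $\Pi T=T\Pi$, $\Pi^2\in Z$, and $\overline{P(T)[\id,v_0]}=0$ in $\pi$ to close every intermediate $\Phi_\sigma$-image back into the finite-dimensional span $V\subseteq\pi^{I_1}$.
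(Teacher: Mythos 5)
Your reductions are the same as the paper's: Lemma \ref{lemma-cK=intesection} to identify $D_1(\sigma,\pi)$ with $\Phi_\sigma(R(\sigma,\pi))$, Lemma \ref{lemma-Phi(Pi)=-Pi(Phi)} plus (\ref{equation-vigneras}) to restrict to $g\in P^+KZ$, then Lemma \ref{lemma-cK-et-D1(pi)} to propagate. But the part you flag as ``the main technical obstacle'' --- the $\dim\sigma=1$ bookkeeping and the $K/IZ$-reduction --- is exactly where the paper's argument has its real content, and your sketch does not close it. The tool the paper uses there, which you never invoke, is Proposition \ref{prop-1-dim}: because $f=P(T)[\id,v_0]$ is $I_1$-fixed, each of $f^+$ and $\Pi(f^-)$ lies in $I^+(\sigma)^{I_1\cap U^+}=\bigoplus_n R_n^+(\sigma)^{I_1\cap U^+}=\bigoplus_n\bFp\, S^n[\id,v_0]$, giving $f^+=\sum c_m S^m[\id,v_0]$ and $\Pi(f^-)=\sum d_n S^n[\id,v_0]$ \emph{uniformly} in $\dim\sigma$. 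This single structural fact is what allows the rewriting $f^+=c_0[\id,v_0]+\sum_\mu\smatr{[\mu]}110\sum_m c_m\Pi S^{m-1}[\id,v_0]$ and hence the explicit answer $\Phi_\sigma\bigl(\smatr{[\lambda]}110 f\bigr)=\sum_m c_m\overline{\Pi S^{m-1}[\id,v_0]}\in\pi^{I_1}$. Without it, your proposal to expand $P(T)=\sum c_jT^j$ and chase each $T^j$ through the relations $\Pi T=T\Pi$, $\Pi^2\in Z$, and $\overline{P(T)[\id,v_0]}=0$ is not a proof: when $\dim\sigma=1$ the operators $S$ and $\Pi$ do not commute (e.g.\ $\Pi S[\id,v_0]\neq S[\Pi,v_0]$), so the $I^-$-components of $T^j[\id,v_0]$ are $S$-iterates, not $T$-iterates, and you have not shown how to re-express them in the two relations you cite.

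This also makes your choice of invariant $V=\mathrm{span}\bigl(\overline{T^i[\id,v_0]},\overline{T^i[\Pi,v_0]}\bigr)$ doubtful. For $\dim\sigma\ge2$ one has $\Pi S^{m-1}[\id,v_0]=T^{m-1}[\Pi,v_0]\in V$, but for $\dim\sigma=1$ the paper's natural answer is a span of $\overline{\Pi S^{m-1}[\id,v_0]}$, and there is no evident identity placing these in $V$. The extra constraint of landing in a \emph{fixed} $V$ is both unnecessary (only $\pi^{I_1}$ is required, and each $\Pi S^{m-1}[\id,v_0]$ is automatically $I_1$-fixed since $\Pi$ normalizes $I_1$) and likely false as stated. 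Finally, the inductive step as you describe it is not yet a reduction: the $\overline{x_0}$ coming out of Lemma \ref{lemma-cK-et-D1(pi)} applied to $g^{(n)}\cdots g^{(1)}kf$ depends on the full $I^-$-expansion of $g^{(n-1)}\cdots g^{(1)}kf$, and to conclude $\overline{x_0}\in\pi^{I_1}$ you again need to know that the $I^+/I^-$ decomposition of these translates is controlled by $S$-iterates --- which is Proposition \ref{prop-1-dim} once more. So the scaffolding is right, but the core step is missing; supplying Proposition \ref{prop-1-dim} and discarding the artificial target $V$ would close the gap and essentially reproduce the paper's argument.
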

\begin{proof}
Soient $v_0\in\sigma$ un vecteur non nul fix\'e par $I_1$ et $f=P(T)[\id,v_0]$ de telle sorte que $P(T)\cInd_{KZ}^G\sigma=\langle G\cdot f\rangle$ puisque $T$ est un endomorphisme $G$-\'equivariant de $\cInd_{KZ}^G\sigma$. Par le lemme \ref{lemma-cK=intesection}, l'\'enonc\'e \'equivaut \`a dire que  $\Phi_{\sigma}(P(T)\cInd_{KZ}^G\sigma)\subseteq\pi^{I_1}$, ou plut\^ot, que $\Phi_{\sigma}(g\cdot f)\in\pi^{I_1}$ pour tout $g\in G$. De plus, le lemme \ref{lemma-Phi(Pi)=-Pi(Phi)} permet de se ramener au cas $g\in P^+ KZ$.

Comme $f$ est fix\'e par $I_1$, il s'\'ecrit (de mani\`ere unique) sous la forme $f=f^++f^-$ avec $f^+\in I^+(\sigma)^{I_1}$ et $f^-\in I^-(\sigma)^{I_1}$, et le lemme \ref{lemma-Sv=Tv}  implique que $f^+\in\oplus_{0\leq m\leq k}R_{m}^+(\sigma)$ et $f^-\in\oplus_{1\leq n\leq k-1}R_n^-(\sigma)$. On en d\'eduit en utilisant la proposition \ref{prop-1-dim} que:
\[f^{+}=\summ_{0\leq m\leq k}c_mS^m[\id,v_0],\ \ \Pi(f^-)=\summ_{0\leq n \leq k-1}d_nS^n[\id,v_0]\]
pour des $c_m,d_n\in\bFp$ convenables. En r\'e\'ecrivant $f^+$ sous la forme
\[f^+=c_0[\id,v_0]+\summ_{\mu\in\F_q}\matr{[\mu]}110\summ_{1\leq m\leq k}c_m\Pi S^{m-1}[\id,v_0],\]
on obtient par le calcul  du lemme \ref{lemma-cK-et-D1(pi)} que: pour tout $\lambda\in\F_q$,
\[\matr{[\lambda]}110f\in \summ_{1\leq m\leq k}c_m\Pi S^{m-1}[\id,v_0]+I^+(\sigma),\]
o\`u l'on a utilis\'e le fait que $\Pi S^m[\id,v_0]$ sont fix\'es par $I_1\cap U^+$ pour tout $m\geq 0$.
En particulier,
\[\Phi_{\sigma}\biggl(\matr{[\lambda]}110f\biggr)=\summ_{1\leq m\leq k}c_m\overline{\Pi S^{m-1}[\id,v_0]}\in\pi^{I_1}\]
ce qui montre que $\Phi_{\sigma}(gf)\in\pi^{I_1}$ si $g\in KZ$ en utilisant la d\'ecomposition (\ref{equation-decom-K/I}).
On proc\`ede de m\^eme pour $g\in P^+KZ$, $g\notin KZ$.
\end{proof}

\subsection{Le diagramme canonique associ\'e \`a une repr\'esentation irr\'eductible de $G$}
\label{subsection-definition}


Fixons $\pi$ une repr\'esentation lisse \emph{irr\'eductible} de $G$ (admettant un caract\`ere central). 
On  d\'efinit le \emph{diagramme
canonique}  associ\'e \`a $\pi$ et on en fournit des propri\'et\'es \'el\'ementaires. On le d\'etermine explicitement lorsque $\pi$ est non supersinguli\`ere ou $F=\Q_p$.\vv

\subsubsection{D\'efinition et propri\'et\'es du diagramme canonique}\label{subsubsection-def-diagcan}

En choisissant $\sigma$ une sous-$KZ$-repr\'esentation irr\'eductible de $\pi$, on obtient une surjection $G$-\'equivariante $\cInd_{KZ}^G\sigma\twoheadrightarrow\pi$ ainsi que les sous-espaces $I^+(\sigma,\pi)$ et $I^-(\sigma,\pi)$ de $\pi$ (\S\ref{subsection-preliminaire}).
En g\'en\'eral, le choix de $\sigma$ n'est pas unique. Comme on l'a signal\'e dans la remarque \ref{remark-Ind-nonadm}, $\cInd_{KZ}^G\sigma$ est r\'eductible, donc on peut appliquer \`a $\pi$ les r\'esultats du \S\ref{subsection-preliminaire}, o\`u l'on n'a fait aucune hypoth\`ese sp\'ecifique sur le quotient non trivial de $\cInd_{KZ}^G\sigma$.\vv



\begin{prop}\label{prop-I1-invariant-in-D1}
On a  $\pi^{I_1}\subseteq I^+(\sigma,\pi)\cap
I^-(\sigma,\pi)$.
\end{prop}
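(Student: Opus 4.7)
Le plan est de ramener l'\'enonc\'e \`a la situation du corollaire \ref{coro-adm-I1-in-D1} en s'affranchissant de l'hypoth\`ese d'admissibilit\'e gr\^ace \`a la classification de Barthel--Livn\'e et au lemme \ref{lemma-S^m=0-pas}. Plus pr\'ecis\'ement, l'objectif central est de produire, pour tout vecteur non nul $v\in\pi^{I_1}$, une relation de la forme
\[c_0v+\summ_{n=1}^{m}c_nS^nv=0\]
avec $c_m\neq 0$ et $m\geq 1$, apr\`es quoi le lemme \ref{lemma-c_m=0} s'appliquera.

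J'\'etablirais cette relation en distinguant les cas de la classification de Barthel--Livn\'e. Si $\pi$ est un caract\`ere, une s\'erie principale ou une s\'erie sp\'eciale, alors $\pi$ est admissible, donc $\pi^{I_1}$ est de dimension finie; comme le lemme \ref{lemma-H-invariant}(iv) assure que $S^nv\in\pi^{I_1}$ pour tout $n\geq 0$, la famille $(S^nv)_{n\geq 0}$ est n\'ecessairement lin\'eairement li\'ee, et en prenant une relation minimale (en supposant $v\neq 0$) on obtient $c_0v+\sum_{n=1}^mc_nS^nv=0$ avec $c_m\neq 0$ et $m\geq 1$. Si $\pi$ est supersinguli\`ere, le lemme \ref{lemma-S^m=0-pas} fournit directement un entier $m\geq 1$ tel que $S^mv=0$, ce qui constitue un cas particulier de la relation recherch\'ee (avec $c_m=1$ et $c_0=\cdots=c_{m-1}=0$).

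Une fois cette relation en main, je conclurais \emph{mutatis mutandis} comme dans la preuve du corollaire \ref{coro-adm-I1-in-D1}. D\'ecomposons $v=v^++v^-$ avec $v^+\in I^+(\sigma,\pi)$ et $v^-\in I^-(\sigma,\pi)$: le lemme \ref{lemma-c_m=0} appliqu\'e \`a $v$ fournit $\ell_{\sigma}(v^+)\leq\ell_{\sigma}(v^-)$. En appliquant le m\^eme raisonnement \`a $\Pi(v)$, qui appartient aussi \`a $\pi^{I_1}$ puisque $\Pi$ normalise $I_1$, et dont la d\'ecomposition s'\'ecrit $(\Pi v)^+=\Pi(v^-)$, $(\Pi v)^-=\Pi(v^+)$, j'obtiendrais l'in\'egalit\'e inverse $\ell_{\sigma}(v^-)\leq \ell_{\sigma}(v^+)$, puis l'\'egalit\'e $\ell_{\sigma}(v^+)=\ell_{\sigma}(v^-)$. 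L'assertion finale du lemme \ref{lemma-c_m=0} forcerait alors $\ell_{\sigma}(v)=0$, soit $v^+,v^-\in I^+(\sigma,\pi)\cap I^-(\sigma,\pi)$, et donc $v\in I^+(\sigma,\pi)\cap I^-(\sigma,\pi)$.

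Le point d\'elicat sera le cas supersinguli\`ere lorsque $F\neq\Q_p$, o\`u $\pi$ n'est pas n\'ecessairement admissible et o\`u la strat\'egie du corollaire \ref{coro-adm-I1-in-D1} ne s'applique pas telle quelle. Toute la charge conceptuelle est alors report\'ee sur le lemme \ref{lemma-S^m=0-pas}, qui repose lui-m\^eme sur \cite{BP} et \cite{Pa3}; une fois ce lemme disponible, l'extension de l'argument ne rencontre plus d'obstacle majeur.
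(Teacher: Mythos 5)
Your proposal is correct and takes essentially the same route as the paper: both split on supersingular versus non-supersingular, invoke admissibility (equivalently, finite-dimensionality of $\pi^{I_1}$) to obtain the relation in the non-supersingular case and the lemme \ref{lemma-S^m=0-pas} to obtain $S^m v=0$ in the supersingular case, and then conclude by applying the lemme \ref{lemma-c_m=0} to $v$ and to $\Pi(v)$ exactly as in the proof of the corollaire \ref{coro-adm-I1-in-D1}. The only difference is cosmetic: the paper delegates the non-supersingular case directly to the corollaire \ref{coro-adm-I1-in-D1}, while you re-derive the linear relation among the $S^n v$ explicitly.
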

\begin{proof}
Par d\'efinition, un vecteur $v\in\pi$ appartient \`a $I^+(\sigma,\pi)\cap I^-(\sigma,\pi)$ si et
seulement si $\ell_{\sigma}(v)=0$. Supposons $v$ non nul fix\'e par $I_1$. Alors $S^nv$ l'est aussi pour tout entier $n\geq 1$ par le lemme \ref{lemma-H-invariant}(iv).

Supposons d'abord $\pi$ non supersinguli\`ere. En particulier, $\pi$ est admissible (\cite{BL2}), donc le corollaire \ref{coro-adm-I1-in-D1} permet de conclure.

Traitons le cas o\`u $\pi$ est supersinguli\`ere. D'apr\`es le lemme \ref{lemma-S^m=0-pas}, on a $S^mv=0$ pour $m\gg 1$ suffisamment grand.
En particulier, $v$ v\'erifie l'hypoth\`ese du lemme \ref{lemma-c_m=0}. De m\^eme, $\Pi(v)\in \pi^{I_1}$ la v\'erifie aussi. Donc l'argument que l'on a employ\'e dans la d\'emonstration du corollaire \ref{coro-adm-I1-in-D1} permet de conclure.
\end{proof}

\begin{cor}\label{corollary-D1-deuxpoids}
Soient $\sigma'$ une autre sous-$KZ$-repr\'esentation irr\'eductible de $\pi$ (peut \^etre isomorphe \`a $\sigma$) et $\cInd_{KZ}^G\sigma'\twoheadrightarrow \pi$
la surjection $G$-\'equivariante induite.
Alors
\[I^+(\sigma,\pi)=I^{+}(\sigma',\pi),\ \ I^-(\sigma,\pi)=I^-(\sigma',\pi).\]
\end{cor}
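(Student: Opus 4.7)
The plan is to prove $I^+(\sigma',\pi) \subseteq I^+(\sigma,\pi)$; equality then follows by symmetry (exchanging the roles of $\sigma$ and $\sigma'$), and applying $\Pi$ on both sides gives the corresponding equality $I^-(\sigma,\pi) = I^-(\sigma',\pi)$. Observe first that $I^+(\sigma,\pi)$ is stable not only under $IZ$ (as recalled in \S\ref{subsection-preliminaire}) but also under the monoid $P^+$ acting on the left: this follows from the decomposition (\ref{equation-P+=union}), which implies $P^+ \cdot \sigma \subseteq \bigcup_{n\geq 0} \smatr{\varpi^n}{\cO}01 \cdot \sigma = I^+(\sigma,\pi)$, combined with the monoid property of $P^+$. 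Consequently, once we know $\sigma' \subseteq I^+(\sigma,\pi)$, we get
\[ I^+(\sigma',\pi) = \sum_{n\geq 0} \matr{\varpi^n}{\cO}01 \sigma' \subseteq P^+ \cdot I^+(\sigma,\pi) \subseteq I^+(\sigma,\pi). \]

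To establish $\sigma' \subseteq I^+(\sigma,\pi)$, I fix a nonzero $v_0' \in (\sigma')^{I_1}$. Since $\sigma'$ is irreducible as a $KZ$-representation and $(\sigma')^{I_1}$ is one-dimensional, the line $\bFp v_0'$ is $\cH$-stable and $\sigma' = \langle KZ \cdot v_0'\rangle$. By the decomposition $K = I \sqcup \bigsqcup_{\lambda\in\F_q}\smatr{[\lambda]}110 I$ from (\ref{equation-decom-K/I}), together with the fact that $IZ \cdot v_0' \subseteq \bFp v_0'$, it is enough to show that $v_0'$ and the vectors $\smatr{[\lambda]}110 v_0'$ for $\lambda\in \F_q$ all lie in $I^+(\sigma,\pi)$. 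The first statement is immediate from Proposition \ref{prop-I1-invariant-in-D1} applied to the pair $(\sigma,\pi)$, which yields $v_0' \in \pi^{I_1} \subseteq I^+(\sigma,\pi) \cap I^-(\sigma,\pi)$.

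For the second, I decompose $\smatr{[\lambda]}110 = \smatr{1}{[\lambda]}01 s$ with $s = \smatr0110$, and exploit the inclusion $v_0' \in I^-(\sigma,\pi) = \Pi \cdot I^+(\sigma,\pi)$ to write $v_0' = \Pi w$ with $w \in I^+(\sigma,\pi)$. The matrix identity $s\Pi = \smatr{\varpi}001 \in P^+$ then gives
\[ s\cdot v_0' = \matr{\varpi}001 w \in P^+ \cdot I^+(\sigma,\pi) \subseteq I^+(\sigma,\pi), \]
and left multiplication by $\smatr{1}{[\lambda]}01 \in IZ$ (under which $I^+(\sigma,\pi)$ is stable) yields $\smatr{[\lambda]}110 v_0' \in I^+(\sigma,\pi)$, as required. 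I expect no serious obstacle: the argument is a short reduction once Proposition \ref{prop-I1-invariant-in-D1} is available, the only subtle point being the simultaneous use of both containments $v_0' \in I^+(\sigma,\pi)$ \emph{and} $v_0' \in I^-(\sigma,\pi)$ coming from that proposition.
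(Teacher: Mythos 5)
Your proof is correct and essentially the same as the paper's: both hinge on Proposition \ref{prop-I1-invariant-in-D1} (to place $v_0' \in I^+(\sigma,\pi)\cap I^-(\sigma,\pi)$) together with the matrix identity $s\Pi = \smatr{\varpi}{0}{0}{1}\in P^+$ and the $P^+$-stability of $I^+(\sigma,\pi)$. The only (cosmetic) difference is that where the paper invokes Lemma \ref{lemma-I+(sigma)=P+Pi(v)} to write $I^+(\sigma,\pi)=\langle P^+\cdot\Pi(v_0)\rangle$ and then substitutes $w_0=\Pi Q_2\Pi(v_0)$, you re-derive the needed inclusion $\sigma'\subseteq I^+(\sigma,\pi)$ inline by checking the $K/I$ coset representatives $\id$ and $\smatr{[\lambda]}{1}{1}{0}$ directly.
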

\begin{proof}
Rappelons que $I^+(\sigma,\pi):=\langle P^+\cdot \sigma\rangle$. Prenons $v_0\in \sigma^{I_1}$ un
vecteur non nul. Par le lemme \ref{lemma-I+(sigma)=P+Pi(v)} ci-apr\`es, on a  $I^+(\sigma,\pi)=\langle P^+\cdot{\mathrm{\Pi}}(v_0)\rangle$ et
$I^-(\sigma,\pi)=\langle{\mathrm{\Pi}} P^+\cdot {\mathrm{\Pi}}(v_0)\rangle$. Autrement dit, $x$ appartient \`a $
I^+(\sigma,\pi)$ (resp. \`a $I^-(\sigma,\pi)$) si et seulement s'il
existe $Q_1\in \bFp[P^+]$ (resp. $Q_2\in \bFp[P^+]$) tel que
$x=Q_1{\mathrm{\Pi}} \cdot v_0$ (resp. $x={\mathrm{\Pi}} Q_2{\mathrm{\Pi}}\cdot v_0$).

Cela dit, le corollaire
d\'ecoule de la proposition \ref{prop-I1-invariant-in-D1}(i).
Plus pr\'ecis\'ement, prenons $w_0\in\sigma'$ un vecteur non nul fix\'e par $I_1$,
alors cette proposition dit que $w_0\in I^+(\sigma,\pi)\cap I^-(\sigma,\pi)$, et donc par ce qui pr\'ec\`ede, il existe
$Q_1,Q_2\in\bFp[P^+]$ tels que
\[w_0=Q_1{\mathrm{\Pi}}\cdot v_0={\mathrm{\Pi}} Q_2{\mathrm{\Pi}}\cdot v_0.\]
En appliquant le lemme \ref{lemma-I+(sigma)=P+Pi(v)} \`a $\sigma'$ et \`a $w_0$, on en d\'eduit que:
\[I^+(\sigma',\pi)=\langle P^+\cdot \Pi(w_0)\rangle =\langle P^+\cdot \Pi^2Q_2\Pi(v_0)\rangle \subseteq \langle P^+\cdot\Pi(v_0)\rangle =I^+(\sigma,\pi),\]
d'o\`u l'inclusion $I^+(\sigma',\pi)\subseteq I^+(\sigma,\pi)$.
Le r\'esultat s'en d\'eduit en \'echangeant $\sigma'$ et $\sigma$ et en remarquant que $I^-(\sigma,\pi)=\Pi\cdot I^+(\sigma,\pi)$.
\end{proof}
\begin{lem}\label{lemma-I+(sigma)=P+Pi(v)}
Soit $v_0\in\sigma^{I_1}$ un vecteur non nul. Dans $\cInd_{KZ}^G\sigma$, on a \[I^+(\sigma)=[P^+\Pi,v_0],\ \ I^-(\sigma)=[\Pi P^+ \Pi,v_0].\]
\end{lem}
\begin{proof}
Il suffit de v\'erifier l'\'enonc\'e concernant $I^+(\sigma)$. Par d\'efinition, $I^+(\sigma)=[P^+,\sigma]=P^+\cdot[\id,\sigma]$. Or, par le corollaire \ref{cor-intersection}, l'espace
$[\id,\sigma]$ est juste l'espace engendr\'e par
\[\biggl\{\Bigl[\matr{[\lambda]}110,v_0\Bigr],\ \ \lambda\in\F_q\biggr\}\]
qui n'est autre que (\`a une constante dans $\bFp^{\times}$ pr\`es)
\[\biggl\{\Bigl[\matr{\varpi}{[\lambda]}01\Pi,v_0\Bigr],\ \ \lambda\in\F_q\biggr\}.\]
Le lemme s'en d\'eduit en notant que $\smatr{\varpi}{[\lambda]}01\in P^+$.
\end{proof}
\vv


Rappelons (\cite{BP,Pa}) qu'un \emph{diagramme} est
par d\'efinition un triplet $(D_0,D_1,r)$ o\`u $D_0$ est une
repr\'esentation lisse de $KZ$, $D_1$ est une repr\'esentation lisse
de $N$ et $r: D_1\ra D_0$ est un morphisme $IZ$-\'equivariant. On
d\'efinit des morphismes entre deux diagrammes de mani\`ere \'evidente et
on note $\DIAG$ la cat\'egorie r\'esult\'ee.  Notons qu'elle est \'equivalente \`a la cat\'egorie des syst\`emes de coefficients \'equivariants sur l'arbre de $G$ (\cite{Pa}). \vspace{1mm}

Donnons quelques exemples de diagrammes.
\begin{exem}\label{exemple-diag}
Soit $\pi'$ une repr\'esentation lisse de $G$ admettant un caract\`ere central. Alors  (o\`u $\mathrm{can}$ d\'esigne l'inclusion naturelle)

(i) $\cK(\pi'):=(\pi'|_{KZ},\pi'|_{N},\mathrm{can})$ est un diagramme;

(ii) $(\pi'^{K_1},\pi'^{I_1},\mathrm{can})$ est un diagramme; plus g\'en\'eralement, $(\pi'^{K_{n}},\pi'^{I_n},\mathrm{can})$ est un diagramme pour tout $n\geq 1$.

(iii)  $(W,W\cap \Pi(W),\mathrm{can})$ est un diagramme pour toute sous-$KZ$-repr\'esentation $W$ de $\pi'$, et $(\langle K\cdot M\rangle, M, \mathrm{can})$ l'est aussi pour toute sous-$N$-repr\'esentation $M$ de $\pi'$.
\end{exem}

D'apr\`es le corollaire \ref{corollary-D1-deuxpoids},
l'espace $I^+(\sigma,\pi)$ (resp.
$I^-(\sigma,\pi)$, $I^{+,n}(\sigma,\pi)$, le niveau
$\ell_{\sigma}(\cdot)$, etc.) ne d\'epend que de $\pi$, on peut donc le noter
$I^{+}(\pi)$ (resp. $I^-(\pi)$, $I^{+,n}(\pi)$,
$\ell(\cdot)$, etc.). Posons:
\begin{equation}\label{equation-diag-can}D_1(\pi):=I^+(\pi)\cap I^-(\pi),\ \ D_0(\pi):=\langle K\cdot D_1(\pi)\rangle\subset \pi.\end{equation}
Alors $D_1(\pi)$ est stable par $N$ et $D_0(\pi)$ par $KZ$.  En fait, $D_1(\pi)$ est le plus grand sous-espace vectoriel de
$D_0(\pi)$ stable par $N$. On pose la d\'efinition suivante:\vspace{1mm}

\begin{defn}\label{definition-diagcano}
Le \emph{diagramme canonique} associ\'e \`a $\pi$ est
le diagramme
\[D(\pi):=(D_0(\pi),D_1(\pi),\mathrm{can})\]
o\`u $\mathrm{can}$ d\'esigne l'inclusion naturelle
$D_1(\pi)\hookrightarrow D_0(\pi)$.
\end{defn}

Le r\'esultat principal de l'article  est le th\'eor\`eme suivant,
qui dit que, en passant de $\pi$ \`a son diagramme canonique, on ne
perd pas d'information.

\begin{thm}\label{theorem-diagcanonique-rep}
Pour toute repr\'esentation lisse irr\'eductible $\pi'$
de $G$, on a $\pi\cong \pi'$ si et seulement si $D(\pi)\cong D(\pi')$ en tant que
diagrammes.
\end{thm}

En fait, on va d\'emontrer un r\'esultat plus g\'en\'eral:
\begin{thm}\label{theorem-diag-morphisme}
Pout toute repr\'esentation lisse $\pi'$ de $G$ (pas forc\'ement irr\'eductible), il existe un isomorphisme naturel d'espaces vectoriels:
\[\Hom_{\DIAG}(D(\pi),\cK(\pi'))\cong \Hom_{G}(\pi,\pi'),\]
o\`u $\cK(\pi')$ est le diagramme d\'efini dans l'exemple \ref{exemple-diag}(i).
\end{thm}

\begin{proof}
On d\'efinit d'abord un morphisme
\begin{equation}\label{equation-def-iota}\iota:
\Hom_{\DIAG}(D(\pi),\cK(\pi'))\ra \Hom_G(\pi,\pi').\end{equation}
Soit $(\varphi_0,\varphi_1):D(\pi)\ra \cK(\pi')$ un morphisme non
nul de diagrammes, c'est-\`a-dire, $\varphi_0:D_0(\pi)\ra
\pi'|_{KZ}$ est un morphisme $KZ$-\'equivariant et
\[\varphi_1=\varphi_0|_{D_1(\pi)}:D_1(\pi)\ra \pi'|_N\] est un morphisme
$N$-\'equivariant. On va d\'efinir un morphisme $G$-\'equivariant
$\varphi:\pi\ra \pi'$ \`a partir de $(\varphi_0,\varphi_1)$
v\'erifiant
$\varphi|_{D_0(\pi)}=\varphi_0$.

D'abord, si un tel morphisme existe, il est n\'ecessairement unique
puisque $\pi$ est engendr\'ee par $D_0(\pi)$ en tant que
$G$-repr\'esentation. Plus pr\'ecis\'ement, si $v\in\pi$, on d\'efinit
$\varphi(v)$ comme suit par r\'ecurrence sur le niveau de $v$:
\begin{itemize}
\item[(a)] Si $\ell(v)=0$, alors $v\in D_1(\pi)$ et on pose
$\varphi(v)=\varphi_1(v)$.

\item[(b)] Si $v^+\in I^+(\pi)$ et $\ell(v^+)=n\geq 1$, soit
$w_{\lambda}\in I^{+,n-1}(\pi)$ (avec $\lambda\in\F_q$) des
\'el\'ements tels que
\[v^+=\summ_{\lambda\in\F_q}\matr{[\lambda]}{1}{1}{0}{\mathrm{\Pi}}(w_{\lambda}),\]
on pose
\[\varphi(v^+):=\summ_{\lambda\in\F_q}\matr{[\lambda]}{1}{1}{0}{\mathrm{\Pi}}\varphi(w_\lambda)\]
o\`u $\varphi(w_{\lambda})$ a \'et\'e d\'efini par r\'ecurrence. C'est bien
d\'efini car: si
\[v^+=\summ_{\lambda\in\F_q}\matr{[\lambda]}{1}{1}{0}{\mathrm{\Pi}}(w_{\lambda}')\]
pour d'autres $w_{\lambda}'\in I^{+,n-1}(\pi)$, alors on a
$w_{\lambda}-w'_{\lambda}\in D_1(\pi)$ pour tout $\lambda\in\F_q$
et donc
\[\begin{array}{rll}&&\summ_{\lambda\in\F_q}\matr{[\lambda]}{1}{1}{0}{\mathrm{\Pi}}\varphi(w_{\lambda})
-\summ_{\lambda\in\F_q}\matr{[\lambda]}{1}{1}{0}{\mathrm{\Pi}}\varphi(w_{\lambda}')\\
&=&\summ_{\lambda\in\F_q}\matr{[\lambda]}{1}{1}{0}{\mathrm{\Pi}}\varphi_1(w_{\lambda}-w_{\lambda}')\\
&=&\varphi_0\Bigl(\summ_{\lambda\in\F_q}\matr{[\lambda]}{1}{1}{0}{\mathrm{\Pi}}(w_{\lambda}-w_{\lambda}')\Bigr)\\
&=&0
\end{array}\]
o\`u la deuxi\`eme \'egalit\'e r\'esulte des faits que $w_{\lambda}-w_{\lambda}'\in D_1(\pi)$ pour tout $\lambda\in\F_q$
et que $(\varphi_0,\varphi_1)$ est un morphisme de
diagrammes.

\item[(c)] Si $v=v^++{\mathrm{\Pi}}(\tilde{v}^+)$ avec $v^+, \tilde{v}^+\in
I^+(\pi)$, alors on pose
\[\varphi(v)=\varphi(v^+)+{\mathrm{\Pi}}\varphi(\tilde{v}^+).\]
On v\'erifie que cette d\'efinition ne d\'epend pas de la d\'ecomposition
en utilisant que
\[I^+(\pi)\cap {\mathrm{\Pi}}(I^+(\pi))=D_1(\pi)\] et que $\varphi_1$ est
$N$-\'equivariant.
\end{itemize}
\

\'Evidemment, l'application $\varphi$ ainsi d\'efinie est lin\'eaire. Il faut v\'erifier qu'elle est
$G$-\'equivariante, i.e. quelle que soit l'\'egalit\'e $v=\summ_{i\in
S}g_iv_i$ (avec $g_i\in G$, $v,v_i\in \pi$, et $S$ un ensemble
fini d'indices) dans $\pi$, on doit avoir dans $\pi'$
\begin{equation}\label{equation-thm-diagmorph}\varphi(v)=\summ_{i\in S}g_i\varphi(v_i).
\end{equation}
Comme ce qu'on veut d\'emontrer est vrai sur $D_0(\pi)$ (voir (b) et
(c) ci-dessus) et comme $\pi$ est engendr\'ee par $D_1(\pi)$ en tant
que $G$-repr\'esentation, on peut supposer que tous les $v_i$ sont
dans $D_1(\pi)$. Puisqu'il existe des $h_j\in G$ et des
$w_j\in D_1(\pi)$ tels que
\[v=\summ_jh_jw_j\ \ \mathrm{et}\ \ \varphi(v)=\summ_jh_j\varphi(w_j),\]
on peut supposer de plus $v=0$.\vspace{1mm}

Rappelons que l'on peut \'ecrire $g=g^{(n)}\cdots g^{(1)}k$ comme dans (\ref{equation-g=g^i-vigneras}) si $g\in P^+KZ$ et $g=\Pi g^{(n)}\cdots g^{(1)}k$ si $g\in \Pi P^+ KZ$.  La longueur de $g$ est d\'efinie par $\ell(g)=n$ si $g\in P^+KZ$ et $\ell(g)=n+1$ si $g\in \Pi P^+KZ$.

On va v\'erifier (\ref{equation-thm-diagmorph}) par r\'ecurrence sur l'entier
$m:=\max_{i\in S}\{\ell(g_i)\}$. Le cas o\`u $m=0$ est \'evident
puisqu'alors $g_i\in KZ$. En g\'en\'eral, on pose
\[S_{\lambda}=\{i\in S|\ g_i\in P^+KZ,\  g_i^{(\ell(g_i))}=g_{\lambda}\},\ \ S_{{\mathrm{\Pi}}}=\{i\in S|\ g_i\in \Pi P^+KZ\}\]
de telle sorte qu'on ait
\[0=\summ_{\lambda\in\F_q}g_{\lambda}\big(\summ_{i\in S_{\lambda}}g_i'v_i\big)+{\mathrm{\Pi}} \big(\summ_{i\in S_{\mathrm{\Pi}}}g_i'v_i\big)\]
o\`u l'on a \'ecrit $g_i=g_{\lambda}g_i'$ si $i\in S_{\lambda}$ et $g_i'=g_i^+$ si $i\in S_{\Pi}$. Comme $g_i'\in
P^+$ et $v_i\in D_1(\pi)\subseteq I^+(\pi)$, le lemme \ref{lemme-deschoices} implique
\[\sum_{i\in
S_{\lambda}}g_i'v_i\in D_1(\pi)\ \ \textrm{et}\ \ \sum_{i\in
S_{\mathrm{\Pi}}}g_i'v_i\in D_1(\pi)\] pour tout $\lambda\in\F_q$. On en
d\'eduit avec $\varphi|_{D_1(\pi)}=\varphi_1$ que:
\[0=\summ_{\lambda\in\F_q}g_{\lambda}\varphi\big(\summ_{i\in S_{\lambda}}g_i'v_i\big)+
{\mathrm{\Pi}}\varphi\big(\summ_{i\in S_{{\mathrm{\Pi}}}}g_i'v_i\big).\] Donc
l'hypoth\`ese de r\'ecurrence permet de conclure que $\varphi$ est
$G$-\'equivariant. Autrement dit,
$\iota:(\varphi_0,\varphi_1)\mapsto \varphi$ donne le morphisme
(\ref{equation-def-iota}) demand\'e
\vv

Ensuite, on d\'efinit un inverse de (\ref{equation-def-iota}). Soit $\varphi:\pi\ra \pi'$ un morphisme
$G$-\'equivariant, on en d\'eduit un morphisme de diagrammes
\[\cK(\varphi):\cK(\pi)\ra\cK(\pi'),\]
et en le composant avec le morphisme naturel
$D(\pi)\hookrightarrow \cK(\pi)$, on obtient un morphisme de diagrammes
$D(\pi)\ra \cK(\pi')$. Ceci d\'efinit un morphisme
\[\kappa: \Hom_G(\pi,\pi')\ra \Hom_{\DIAG}(D(\pi),\cK(\pi')).\]
On v\'erifie que $\iota\circ \kappa(\varphi)=\varphi$ et
$\kappa\circ\iota((\varphi_0,\varphi_1))=(\varphi_0,\varphi_1)$,
ce qui permet de conclure.
\end{proof}
\begin{rem}\label{remark-expliquer-3.2}
On voit que le lemme \ref{lemme-deschoices} joue un r\^ole central pour assurer la $G$-\'equivariance de $\varphi$ dans la d\'emonstration du th\'eor\`eme \ref{theorem-diag-morphisme}. En revanche, l'irr\'eductibilit\'e de $\pi$ n'est utilis\'ee que pour garantir l'ind\'ependance de $\sigma$ de $D(\pi)$ (et de $\ell(\cdot)$, etc.). Donc la construction marche bien pour $\pi$ un quotient non trivial de $\cInd_{KZ}^G\sigma$ (pas forc\'ement irr\'eductible): il suffit de remplacer $D(\pi)$ (resp. $\ell(\cdot)$ etc.) par $D(\sigma,\pi)$ (resp. $\ell_{\sigma}(\cdot)$ etc.). On a choisi de l'\'enoncer ici dans un cadre plus restrictif parce que le cas o\`u $\pi$ est irr\'eductible est plus int\'eressant.
\end{rem}

\begin{rem}\label{remark-irreductible}
Conservons les notations du th\'eor\`eme \ref{theorem-diag-morphisme}.
Par construction, si \[(\varphi_0,\varphi_1):D(\pi)\ra \cK(\pi')\]
est non nul, alors $\varphi=\iota((\varphi_0,\varphi_1)):\pi\ra
\pi'$ l'est aussi. L'irr\'eductibilit\'e de $\pi$ implique que
$\varphi$ est de plus injectif, et donc
$(\varphi_0,\varphi_1)=\kappa(\varphi)$ l'est aussi. Cette
propri\'et\'e peut \^etre vue comme \emph{irr\'eductibilit\'e} de $D(\pi)$. 
\end{rem}
\vv

\begin{proof}[D\'emonstration du th\'eor\`eme
\ref{theorem-diagcanonique-rep}] La condition est
suffisante d'apr\`es le th\'eor\`eme
\ref{theorem-diag-morphisme}.

R\'eciproquement, soit $\varphi:\pi\simto\pi'$ un isomorphisme
$G$-\'equivariant. Alors $\sigma':=\varphi(\sigma)\subset\pi'$ est
une sous-$KZ$-repr\'esentation irr\'eductible de $\pi'$ en rappelant que
$\sigma\subset \pi$ l'est, de telle sorte que $D_1(\pi)=\langle P^+\cdot \sigma\rangle\cap \langle \Pi P^+\cdot\sigma\rangle$ et $D_1(\pi')=\langle P^+\cdot \sigma'\rangle\cap \langle \Pi P^+\cdot\sigma'\rangle$. Cela implique l'inculsion
$\varphi(D_1(\pi))\subseteq D_1(\pi')$  puis une injection de diagrammes
\[\kappa(\varphi):D(\pi)\ra D(\pi'),\]
qui est un isomorphisme
car $\kappa(\varphi^{-1})$ en fournit
un inverse.
\end{proof}\vv

Rappelons que dans \cite[\S 9]{BP} est d\'efini un foncteur $H_0$
de $\DIAG$ dans $\Rep_{G}$ (voir aussi \cite[\S 5.2]{Pa} o\`u l'objet ``syst\`eme de coefficients $G$-\'equivariants'' intervient). Pour $D=(D_0,D_1,r)$ un diagramme, on dispose d'une suite exacte:
\begin{equation}\label{equation-def-H_0}\cInd_{N}^G(D_1\otimes\delta_{-1})\overset{\partial}{\ra}\cInd_{KZ}^GD_0\ra H_0(D)\ra0,\end{equation}
o\`u $\delta_{-1}:N\ra \bFp^{\times}$ d\'esigne le caract\`ere
donn\'e par $g\mapsto (-1)^{\mathrm{val}_F(\det(g))}$, et o\`u $\partial$ est le compos\'e des $G$-morphismes suivants qui sont d\'efinis de mani\`ere \'evidente:
\[\partial: \cInd_{N}^G(D_1\otimes\delta_{-1})\hookrightarrow\cInd_{IZ}^GD_1\overset{r}{\ra} \cInd_{IZ}^GD_0\twoheadrightarrow \cInd_{KZ}^GD_0.\]
De mani\`ere explicite, si $x\in D_1\otimes\delta_{-1}$, on a \begin{equation}\label{equation-def-partial}
\partial([\id,x])=[\id,r(x)]-[\Pi,r(\Pi^{-1}\cdot x)]\in\cInd_{KZ}^GD_0\end{equation}
ce qui d\'etermine compl\`etement $\partial$ par sa $G$-\'equivariance.


En particulier, \`a partir de la repr\'esentation irr\'eductible $\pi$ fix\'ee, on obtient $H_0(D(\pi))$ en appliquant $H_0$ au diagramme canonique $D(\pi)$.

\begin{cor}\label{cor-H0(D)}
Avec les notations pr\'ec\'edentes, on a $H_0(D(\pi))\cong \pi$.
\end{cor}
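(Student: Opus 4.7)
La strat\'egie consiste \`a construire deux morphismes $G$-\'equivariants r\'eciproques $\rho : H_0(D(\pi)) \to \pi$ et $\iota : \pi \to H_0(D(\pi))$, en utilisant respectivement la r\'eciprocit\'e de Frobenius et le th\'eor\`eme \ref{theorem-diag-morphisme}.

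Paragraphe 1 : construction de $\rho$. L'inclusion $KZ$-\'equivariante $D_0(\pi) \hookrightarrow \pi$ fournit par r\'eciprocit\'e de Frobenius un morphisme $G$-\'equivariant $\widetilde{\rho} : \cInd_{KZ}^G D_0(\pi) \to \pi$, qui est surjectif puisque tout poids $\sigma \subset \pi$ est contenu dans $D_0(\pi)$ et engendre $\pi$ comme $G$-repr\'esentation. D'apr\`es la formule explicite (\ref{equation-def-partial}) et le fait que $r$ est l'inclusion $D_1(\pi) \hookrightarrow D_0(\pi) \subset \pi$, on a pour tout $x \in D_1(\pi)$
\[\widetilde{\rho}(\partial([\id,x])) = r(x) - \Pi \cdot r(\Pi^{-1} \cdot x) = x - x = 0.\]
Il en r\'esulte que $\widetilde{\rho}$ se factorise en un morphisme $G$-\'equivariant surjectif $\rho : H_0(D(\pi)) \twoheadrightarrow \pi$.

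Paragraphe 2 : construction de $\iota$. On d\'efinit deux applications $\eta_0 : D_0(\pi) \to H_0(D(\pi))$ et $\eta_1 : D_1(\pi) \to H_0(D(\pi))$ par la m\^eme formule $v \mapsto \overline{[\id,v]}$. La $KZ$-\'equivariance de $\eta_0$ est imm\'ediate. Pour la $N$-\'equivariance de $\eta_1$, il suffit de v\'erifier la $\Pi$-\'equivariance : pour $y \in D_1(\pi)$, on a $\Pi \cdot \overline{[\id,y]} = \overline{[\Pi,y]}$, et la relation $\partial([\id, \Pi^{-1} y]) \in \im(\partial)$ lue via (\ref{equation-def-partial}) fournit exactement $\overline{[\Pi,y]} = \overline{[\id,\Pi y]}$ ; c'est pr\'ecis\'ement le r\^ole du twist par $\delta_{-1}$ dans la d\'efinition de $H_0$. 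Le couple $\eta := (\eta_0, \eta_1)$ d\'efinit alors un morphisme de diagrammes $D(\pi) \to \cK(H_0(D(\pi)))$. En appliquant le th\'eor\`eme \ref{theorem-diag-morphisme} avec $\pi' := H_0(D(\pi))$, on obtient un morphisme $G$-\'equivariant $\iota : \pi \to H_0(D(\pi))$ dont la restriction \`a $D_0(\pi)$ co\"incide avec $\eta_0$.

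Paragraphe 3 : $\rho$ et $\iota$ sont inverses. La composition $\rho \circ \iota : \pi \to \pi$ est $G$-\'equivariante et vaut l'identit\'e sur $D_0(\pi)$ ; comme $\pi$ est engendr\'ee par $D_0(\pi)$ en tant que $G$-repr\'esentation, l'unicit\'e du prolongement $G$-\'equivariant (remarqu\'ee dans la preuve du th\'eor\`eme \ref{theorem-diag-morphisme}) entra\^ine $\rho \circ \iota = \id_\pi$. Sym\'etriquement, $\iota \circ \rho$ est un endomorphisme $G$-\'equivariant de $H_0(D(\pi))$ qui vaut l'identit\'e sur l'image canonique de $D_0(\pi)$, laquelle engendre $H_0(D(\pi))$ comme $G$-repr\'esentation par construction ; il vaut donc l'identit\'e. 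L'obstacle principal est la v\'erification de la $\Pi$-\'equivariance de $\eta_1$ : elle est quasi-tautologique une fois admis que le twist par $\delta_{-1}$ dans la d\'efinition de $\partial$ encode exactement la relation $\overline{[\Pi,y]} = \overline{[\id,\Pi y]}$ requise, mais c'est le seul endroit o\`u la d\'efinition pr\'ecise de $H_0$ intervient de mani\`ere non formelle.
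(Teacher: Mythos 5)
Your proof is correct, and it takes a noticeably more self-contained route than the paper's.

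Both arguments share the same skeleton: produce two $G$-morphisms $\rho : H_0(D(\pi)) \to \pi$ and $\iota : \pi \to H_0(D(\pi))$ and check that they are mutually inverse using that $D_0(\pi)$ generates both sides. The divergence is in how the two maps are produced. The paper obtains $\phi = \rho$ by combining th\'eor\`eme \ref{theorem-diag-morphisme} with the adjunction $\Hom_{\DIAG}(D(\pi),\cK(\pi'))\cong\Hom_G(H_0(D(\pi)),\pi')$ cited from \cite[proposition 5.4.3]{Pa}, and it needs \cite[lemme 9.9]{BP} to see that $D_0(\pi)\to H_0(D(\pi))\to\pi$ is injective, which is what makes the diagram morphism $D(\pi)\to\cK(H_0(D(\pi)))$ nonzero so that th\'eor\`eme \ref{theorem-diag-morphisme} returns a nonzero $\psi$. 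You avoid both citations: you build $\rho$ by hand from the inclusion $D_0(\pi)\hookrightarrow\pi$ via Frobenius reciprocity and verify directly, from the explicit formula (\ref{equation-def-partial}), that $\widetilde\rho\circ\partial = 0$; and you never need injectivity of $\eta_0$ as an input, because the elementary calculation $\rho\circ\iota(\overline{[\id,v]}) = v$ on $D_0(\pi)$ already forces both compositions to be the identity (and, as a by-product, forces $\eta_0$ to be injective). The price you pay is having to verify the $N$-equivariance of $\eta_1$ yourself, which requires correctly tracking the $\delta_{-1}$ twist in $\partial$; you do this right. One small slip: in paragraph 2 you write $\partial([\id,\Pi^{-1}y])$, but the relation you want, $\overline{[\Pi,y]}=\overline{[\id,\Pi y]}$, comes from $\partial([\id,\Pi y]) = [\id,\Pi y]-[\Pi,y]$, so you should substitute $x = \Pi y$ (not $\Pi^{-1}y$) into (\ref{equation-def-partial}). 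The remainder is sound.
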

\begin{proof}
Compte-tenu du th\'eor\`eme \ref{theorem-diag-morphisme}, le corollaire se d\'eduit de la propri\'et\'e que (\cite[proposition  5.4.3]{Pa}):
\[\Hom_{\DIAG}(D(\pi),\cK(\pi'))\cong \Hom_{G}(H_0(D(\pi)),\pi')\]
pour toute repr\'esentation lisse $\pi'$ de $G$. En effet, en prenant $\pi'=\pi$, l'inclusion $D(\pi)\hookrightarrow\cK(\pi)$ induit un $G$-morphisme non trivial $\phi:H_0(D(\pi))\twoheadrightarrow\pi$. D'autre part, en appliquant \cite[lemme 9.9]{BP} \`a $D=D(\pi)$ et \`a $\Omega=\pi$, on voit que le morphisme compos\'e
\begin{equation}\nonumber
D_0(\pi)\ra H_0(D(\pi))\ra \pi\end{equation}
est injectif. En particulier, on obtient une injection de diagrammes $D(\pi)\hookrightarrow \cK\bigl(H_0(D(\pi))\bigr)$ qui induit par le th\'eor\`eme \ref{theorem-diag-morphisme} un $G$-morphisme non trivial $\psi:\pi\ra H_0(D(\pi))$. On v\'erifie que $\phi\circ\psi=\mathrm{id}_{\pi}$, ce qui fait que $H_0(D(\pi))$ s'\'ecrit d'une somme directe de $\pi$ avec une certaine sous-$G$-repr\'esentation $X$. Or, par d\'efinition (\ref{equation-def-H_0}), $H_0(D(\pi))$ est engendr\'ee comme $G$-repr\'esentation par $D_0(\pi)$ qui est en fait contenu dans $\pi$. Cela force que $X=0$ et le corollaire s'en d\'eduit.
\end{proof}
\begin{rem}\label{remark-H0(D)}
Supposons de plus $\pi$ admissible. D'apr\`es \cite[th\'eor\`eme 9.8]{BP}, il existe une injection de diagrammes
\[(\varphi_0,\varphi_1):D(\pi)\hookrightarrow \cK(\Omega)\]
o\`u $\Omega$ est une repr\'esentation lisse de $G$ telle que
$\Omega|_K\cong\rInj_K\rsoc_K(\pi)$ est une enveloppe injective
de $\rsoc_K(\pi)$ dans la cat\'egorie $\Rep_K$. Posons
\[\pi':=\langle G\cdot \varphi_1(D_1(\pi))\rangle\subseteq
\Omega\] la sous-$G$-repr\'esentation de $\Omega$ engendr\'ee par
$\varphi_1(D_1(\pi))$. Le th\'eor\`eme
\ref{theorem-diag-morphisme} entra\^ine alors que $\pi'\cong\pi$. Autrement dit, $\pi'$ ne d\'epend ni du choix de $\Omega$ ni du choix de
$(\varphi_0,\varphi_1)$ (\`a isomorphisme pr\`es).
\end{rem}





\subsubsection{Le cas non supersingulier}

On d\'etermine explicitement $D(\pi)$ lorsque $\pi$ est non supersinguli\`ere.

\begin{thm}\label{theorem-non-super}
Supposons $\pi$ non
supersinguli\`ere. Alors
\[D(\pi)=(\pi^{K_1},\pi^{I_1},\mathrm{can}).\]
En particulier, $D_1(\pi)$ est de dimension $\leq 2$.
\end{thm}
\begin{proof}
Rappelons que $\sigma$ est une sous-$KZ$-repr\'esentation irr\'eductible fix\'ee de $\pi$. D'apr\`es \cite[th\'eor\`emes 33, 34]{BL2}, la surjection $\cInd_{KZ}^G\sigma\twoheadrightarrow\pi$ se factorise par $V(\sigma,\lambda)$ pour $\lambda\in\bFp^{\times}$ convenable (en fait, $\lambda$ est d\'etermin\'e par le choix de $\sigma$), o\`u $V(\sigma,\lambda)$ est la
$G$-repr\'esentation d\'efinie par (\ref{equation-define-V()}).

Si $\pi$ est un caract\`ere, tous les \'enonc\'es sont
\'evidents. Si $\pi$ est une s\'erie principale, i.e. $\pi\cong \Ind_P^G\chi_1\otimes\chi_2$ pour $\chi_1$ et $\chi_2$ deux caract\`eres lisses  convenables de $F^{\times}$ (rappelons que $P\subset G$ d\'esigne le sous-groupe de Borel), la d\'ecomposition d'Iwasawa donne:
\[\pi^{K_1}=(\Ind_P^G\chi_1\otimes\chi_2)^{K_1}\cong\Ind_I^K\chi_1\otimes\chi_2.\]
En cons\'equence, $\pi^{I_1}$ est de dimension 2 et $\pi^{K_1}=\langle K\cdot\pi^{I_1}\rangle$ (voir par exemple \cite[\S2]{BP}). D'autre part, on sait d'apr\`es \cite[th\'eor\`eme 30]{BL2} que $\pi\cong
V(\sigma,\lambda)$, donc $D_1(\pi)$ est aussi de dimension 2 par la proposition \ref{prop-cal-D1}. L'\'enonc\'e s'en d\'eduit  en utilisant la proposition \ref{prop-I1-invariant-in-D1}.

Il reste \`a traiter le cas  o\`u $\pi$ est une s\'erie sp\'eciale. Sans perdre de g\'en\'eralit\'e on suppose que le caract\`ere central de $\pi$ est trivial sur $\varpi\in Z$. D'apr\`es \cite[th\'eor\`eme 30]{BL2}, $\sigma$ est de dimension $q$ et est l'unique sous-$KZ$-repr\'esentation irr\'eductible de $\pi$; de plus, $\lambda=1$ et $\pi$ est le quotient de $V(\sigma,1)$
par un
sous-espace vectoriel de dimension 1. Plus pr\'ecis\'ement, si $v_0\in \sigma$ est un vecteur non nul fix\'e par
$I_1$ et si l'on pose
\[f=[\mathrm{Id},v_0]+[\Pi,v_0]\in\cInd_{KZ}^G\sigma,\]
alors $f\notin (T-1)(\cInd_{KZ}^G\sigma)$  et on a une suite
exacte de $G$-repr\'esentations (voir \cite[\S3.4]{BL1})
\[0\ra R(\sigma,\pi)\ra \cInd_{KZ}^G\sigma\ra \pi\ra0\]
avec $R(\sigma,\pi)=(T-1)(\cInd_{KZ}^G\sigma)+ \bFp f$. Le lemme \ref{lemma-cK=intesection} nous permet d'identifier $D_1(\pi)$ avec l'image de $R(\sigma,\pi)$ dans $\pi$ via le morphisme $\Phi_{\sigma}$ qui est d\'efini par (\ref{equation-define-Phi}). D'une part, on a par d\'efinition $\Phi_{\sigma}(f)=\overline{[\Pi,v_0]}$, d'autre part, la proposition \ref{prop-cal-D1} nous dit que \[\Phi_{\sigma}\bigl((T-1)\cInd_{KZ}^G\sigma\bigr)=\bFp\overline{[\id,v_0]}+\bFp\overline{[\Pi,v_0]}\in\pi.\]
Compte-tenu de l'\'egalit\'e $\overline{[\id,v_0]}=-\overline{[\Pi,v_0]}$ dans $\pi$, on en d\'eduit que $D_1(\pi)=\bFp \overline{[\mathrm{Id},v_0]}$ est de
dimension 1. Cela suffit pour conclure que $D_1(\pi)=\pi^{I_1}$ parce que $\pi^{I_1}\subseteq D_1(\pi)$ (proposition \ref{prop-I1-invariant-in-D1}) et que $\pi^{I_1}$ est
aussi de dimension 1 (\cite[lemme 27]{BL1}). Enfin, l'\'enonc\'e que $\pi^{K_1}=D_0(\pi)=\langle K\cdot\pi^{I_1}\rangle$ s'en d\'eduit aussi:  comme $\sigma$ est   un objet injectif dans  la cat\'egorie $\Rep_{K/K_1}$ (\cite[lemme 3.2(iii)]{BP}), on doit avoir $\pi^{K_1}\cong\sigma\oplus X$ avec $X$ une certaine sous-$K$-repr\'esentation de $\pi^{K_1}$; or,  le fait que $\pi^{I_1}$ est de dimension 1 implique que $\pi^{I_1}=\sigma^{I_1}$, puis $X^{I_1}=0$ et $X=0$, d'o\`u l'\'egalit\'e $\pi^{K_1}=\sigma$ et l'\'enonc\'e s'en d\'eduit.
\end{proof}


\subsubsection{Le cas o\`u $F=\Q_p$}

\begin{thm}\label{theorem-Breuil-Paskunas}
Supposons $F=\Q_p$ et $\pi$ supersinguli\`ere. Alors $D_1(\pi)=\pi^{I_1}$ et $D_1(\pi)$ est de dimension 2.
%
\end{thm}
\begin{proof}
Dans \cite{Pa2}, l'espace $I^+(\pi)$ est not\'e $M$ et, le th\'eor\`eme 6.3 \emph{loc. cit.} fournit une suite exacte de $I$-repr\'esentations
\begin{equation}\nonumber\label{equation-suite-Pas}
0\ra \pi^{I_1}\ra M\oplus {\mathrm{\Pi}}(M)\ra \pi\ra0,\end{equation}
c'est-\`a-dire, $D_1(\pi)=M\cap \Pi(M)=\pi^{I_1}$. La derni\`ere conclusion s'en d\'eduit en utilisant \cite[th\'eor\`eme 3.2.4]{Br1}.

Signalons que la proposition \ref{prop-cal-D1} donne aussi une
preuve du th\'eor\`eme \ref{theorem-Breuil-Paskunas} en utilisant \cite[th\'eor\`emes 1.1 et 3.2.4]{Br1}.
\end{proof}

\begin{rem}
(i) Ce n'est plus vrai que $D_0(\pi)=\pi^{K_1}$ dans le th\'eor\`eme \ref{theorem-Breuil-Paskunas}. En fait, $D_0(\pi)$ est \'egale au $K$-socle de $\pi$ (\cite[th\'eor\`eme 4.8]{Br2}).

(ii) Combin\'e avec le th\'eor\`eme \ref{theorem-diagcanonique-rep}, les th\'eor\`emes \ref{theorem-non-super} et \ref{theorem-Breuil-Paskunas} permettent de retrouver
les entrelacements entre les repr\'esentations lisses irr\'eductibles
de $G$ fournis dans \cite[corollaire 36]{BL2} et
\cite[th\'eor\`eme 1.3]{Br1}. 
\end{rem}\vv

\subsection{Le diagramme canonique associ\'e \`a un quotient non trivial de $\cInd_{KZ}^G\sigma$}\label{subsection-quotient}
Dans ce paragraphe, $\pi$ sera un $G$-quotient \emph{non trivial} de
$\cInd_{KZ}^G\sigma$ avec $\sigma$ un poids fix\'e  Rappelons que $I^+(\sigma):=[P^+,\sigma]$ et $I^-(\sigma)=\Pi\cdot I^+(\sigma)$, avec $I^+(\sigma,\pi)$ et $I^-(\sigma,\pi)$ d\'esignant respectivement leurs images dans $\pi$.\vv

\textbf{Notation}: on pose
$D_1(\sigma,\pi):=I^+(\sigma,\pi)\cap I^-(\sigma,\pi)$ et
\[D(\sigma,\pi):=(\langle KZ\cdot D_1(\sigma,\pi)\rangle, D_1(\sigma,\pi),\mathrm{can})\]
qui est un sous-diagramme de $\cK(\pi)$ qui a \'et\'e d\'efini \`a l'exemple \ref{exemple-diag}(iii). Par abus de notation (en
raison de la d\'efinition \ref{definition-diagcano}), on l'appelle
le \emph{diagramme canonique} associ\'e \`a $\pi$. Au contraire du cas irr\'eductible, $D(\sigma,\pi)$ d\'epend de $\sigma$.

La m\^eme construction que celle du th\'eor\`eme \ref{theorem-diag-morphisme}, comme on l'a d\'ej\`a mentionn\'e dans la remarque \ref{remark-expliquer-3.2}, donne un isomorphisme naturel d'espaces vectoriels
\begin{equation}\label{equation-diag-morp-D(sigma)}
\Hom_{\DIAG}(D(\sigma,\pi),\cK(\pi'))\cong\Hom_G(\pi,\pi')\end{equation}
pour toute repr\'esentation lisse $\pi'$ de $G$.
Autrement dit, on a 
\begin{equation}\label{equation-H0(D(sigma))}
H_0(D(\sigma,\pi))\cong \pi.
\end{equation}

\begin{prop}\label{prop-exact-complexe}
Supposons $\pi$ admissible et soit
$\pi_1$ une sous-$G$-repr\'esentation irr\'eductible de $\pi$.
Notons $\pi_2$ le quotient de $\pi$ par $\pi_1$. Alors la suite
exacte
\[0\ra \pi_1\ra \pi\ra \pi_2\ra0\]
induit (i) une injection $D_1(\pi_1)\hookrightarrow
D_1(\sigma,\pi)$ et

(ii) une surjection $D_1(\sigma,\pi)\twoheadrightarrow
D_1(\sigma,\pi_2)$.
\end{prop}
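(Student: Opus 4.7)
Le plan est de comparer directement les espaces $I^{\pm}(\pi_1)$, $I^{\pm}(\sigma,\pi)$ et $I^{\pm}(\sigma,\pi_2)$, puis de prendre leurs intersections. Comme $\pi_1\subset\pi$ est irr\'eductible, tous les r\'esultats du \S\ref{subsection-definition} s'appliquent \`a $\pi_1$, tandis que $\pi$ (admissible) et $\pi_2$ entrent dans le cadre du \S\ref{subsection-quotient}.

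Pour (i), je fixerais une sous-$KZ$-repr\'esentation irr\'eductible $\sigma_1\subset\pi_1$ et un vecteur non nul $w_0\in\sigma_1^{I_1}\subseteq\pi_1^{I_1}\subseteq\pi^{I_1}$. Le corollaire \ref{corollary-D1-deuxpoids} et le lemme \ref{lemma-I+(sigma)=P+Pi(v)} appliqu\'es \`a $\pi_1$ donnent $I^+(\pi_1)=\langle P^+\cdot\Pi(w_0)\rangle$ et $I^-(\pi_1)=\Pi\cdot I^+(\pi_1)$. L'admissibilit\'e de $\pi$, via le corollaire \ref{coro-adm-I1-in-D1}, entra\^inerait $w_0\in D_1(\sigma,\pi)=I^+(\sigma,\pi)\cap I^-(\sigma,\pi)$. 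Comme $D_1(\sigma,\pi)$ est stable par $\Pi$ (remarque suivant le lemme \ref{lemme-deschoices}) et $I^+(\sigma,\pi)$ stable par $P^+$, je d\'eduirais $\Pi(w_0)\in I^+(\sigma,\pi)$, puis $I^+(\pi_1)=\langle P^+\cdot\Pi(w_0)\rangle\subseteq I^+(\sigma,\pi)$; en appliquant $\Pi$, $I^-(\pi_1)\subseteq I^-(\sigma,\pi)$. Prendre l'intersection donnerait (i).

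Pour (ii), l'inclusion de l'image de $D_1(\sigma,\pi)$ dans $D_1(\sigma,\pi_2)$ est imm\'ediate, car la surjection $\pi\twoheadrightarrow\pi_2$ envoie $I^{\pm}(\sigma,\pi)$ sur $I^{\pm}(\sigma,\pi_2)$ par d\'efinition. Pour la surjectivit\'e, je partirais d'un vecteur $\overline v\in D_1(\sigma,\pi_2)$ et j'en choisirais deux relev\'es $v^+\in I^+(\sigma,\pi)$ et $v^-\in I^-(\sigma,\pi)$ dans $\pi$. Le vecteur $v^+-v^-$ serait alors dans $\pi_1$, et se d\'ecomposerait sous la forme $v_1^++v_1^-$ avec $v_1^{\pm}\in I^{\pm}(\pi_1)$ (possible puisque $\pi_1=I^+(\pi_1)+I^-(\pi_1)$). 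En invoquant (i), j'aurais $v_1^{\pm}\in I^{\pm}(\sigma,\pi)$, et donc
\[v^+-v_1^+=v^-+v_1^-\in I^+(\sigma,\pi)\cap I^-(\sigma,\pi)=D_1(\sigma,\pi);\]
ce vecteur rel\`everait $\overline v$ puisque $v_1^+\in\pi_1$ s'annule dans $\pi_2$.

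L'obstacle principal me semble \^etre la partie (i): il s'agit de relier l'espace $I^+(\pi_1)$, d\'efini intrins\`equement via un poids $\sigma_1\subset\pi_1$ qui peut diff\'erer de $\sigma$, \`a l'espace $I^+(\sigma,\pi)$ construit \`a partir de la pr\'esentation de $\pi$ par $\sigma$. C'est l'admissibilit\'e de $\pi$, conjugu\'ee au corollaire \ref{coro-adm-I1-in-D1}, qui d\'ebloque la situation en faisant tomber $\sigma_1^{I_1}$ dans $D_1(\sigma,\pi)$; la stabilit\'e de $I^+(\sigma,\pi)$ sous $P^+$ et celle de $D_1(\sigma,\pi)$ sous $\Pi$ propagent alors cette appartenance, et (ii) se ram\`ene \`a un argument de rel\`evement standard.
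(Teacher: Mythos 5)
Your argument is correct and coincides in all essentials with the paper's own proof: part (i) reduces to showing that a generator of $I^+(\pi_1)$ over $P^+$ (coming from $\pi_1^{I_1}$ via le lemme \ref{lemma-I+(sigma)=P+Pi(v)}) falls into $I^+(\sigma,\pi)$, which follows from the admissibility of $\pi$ and le corollaire \ref{coro-adm-I1-in-D1}; part (ii) is the same lifting argument, correcting a pair of lifts $v^{\pm}$ by a decomposition of $v^+-v^-\in\pi_1=I^+(\pi_1)+I^-(\pi_1)$ and invoking (i). The only cosmetic difference is that the paper writes the lift of $\overline v$ explicitly as $Q_1\Pi v_0$ and $\Pi Q_2\Pi v_0$ with $Q_1,Q_2\in\bFp[P^+]$, whereas you work abstractly with arbitrary preimages in $I^{\pm}(\sigma,\pi)$ — both are equivalent.
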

\begin{proof}

Remarquons tout d'abord que la proposition est triviale si $\pi$ est irr\'eductible, on suppose donc que $\pi$ est r\'eductible. D'autre part, $\pi_1$ \'etant irr\'eductible, $D_1(\pi_1)$ est bien d\'efini par le corollaire \ref{corollary-D1-deuxpoids}.\vv

(i) Il suffit de v\'erifier que $I^+(\pi_1)\subseteq I^+(\sigma,\pi)$, ce qui revient \`a montrer que $\pi_1^{I_1}\subseteq I^+(\sigma,\pi)$, car le lemme \ref{lemma-I+(sigma)=P+Pi(v)} implique qu'il existe $v_1\in\pi_1^{I_1}$ non nul tel que $I^+(\pi_1)=\langle P^+\cdot v_1\rangle$. L'\'enonc\'e d\'ecoule donc du corollaire \ref{coro-adm-I1-in-D1} puisque $\pi$ est admissible.
\vv

(ii) Fixons $v_0\in \sigma\hookrightarrow \pi$ un vecteur non nul fix\'e
par $I_1$ et soit $\overline{v_0}\in\pi_2$ l'image de $v_0$ dans $\pi_2$.
Comme $\pi$ est engendr\'ee par $v_0$ en tant que $G$-repr\'esentation et comme $\pi_1$ est une sous-$G$-repr\'esentation propre de $\pi$, on voit que $\overline{v_0}\neq 0$. D'apr\`es le lemme
\ref{lemma-I+(sigma)=P+Pi(v)}, un vecteur $x\in \pi$ (resp. $\overline{x}\in \pi_2$) appartient \`a $D_1(\sigma,\pi)$ (resp. \`a
$D_1(\sigma,\pi_2)$) si et seulement s'il existe $Q_1,
Q_2\in \bFp[P^+]$ tels que $x=Q_1{\mathrm{\Pi}}\cdot v_0={\mathrm{\Pi}} Q_2{\mathrm{\Pi}}\cdot v_0$
(resp. $\overline{x}=Q_1{\mathrm{\Pi}}\cdot \overline{v_0}={\mathrm{\Pi}} Q_2{\mathrm{\Pi}}\cdot
\overline{v_0}$).

Soit maintenant $\overline{x}\in D_1(\sigma,\pi_2)$ et soient $Q_1,Q_2\in \bFp[P^+]$ tels que
\[\overline{x}=Q_1{\mathrm{\Pi}}\cdot \overline{v_0}={\mathrm{\Pi}} Q_2{\mathrm{\Pi}} \cdot\overline{v_0}.\]
Posons $x_1=Q_1{\mathrm{\Pi}}\cdot v_0\in I^+(\sigma,\pi)$ et $x_2={\mathrm{\Pi}} Q_2 {\mathrm{\Pi}}\cdot
v_0\in I^-(\sigma,\pi)$. Par construction, on a $x_1-x_2\in \pi_1$ et donc il existe $x_1'\in I^+(\pi_1)$ et $x_2'\in I^-(\pi_1)$ tels que \[x_1-x_2=x_1'+x_2'.\]
Par (i), on a $I^+(\pi_1)\subset I^+(\sigma,\pi)$ et $I^-(\pi_1)\subset I^-(\sigma,\pi)$. On en d\'eduit que
$x_1-x_1'$  (resp. $x_2+x_2'$) est un rel\`evement
de $\overline{x}$ dans $I^+(\sigma,\pi)$ (resp.
$I^-(\sigma,\pi)$), d'o\`u le r\'esultat.
\end{proof}
\begin{cor}\label{corollaire-deuxcas=}
Supposons $\pi$ admissible. Si
$D_1(\sigma,\pi)$ est de dimension finie, alors $\pi$ est de
longueur finie.
\end{cor}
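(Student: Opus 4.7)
The plan is to show that if $d := \dim_{\bFp} D_1(\sigma,\pi) < \infty$, then any strictly increasing chain of sub-$G$-representations of $\pi$ has length at most $d$. As a preliminary observation, for every non-trivial $G$-quotient $\pi'$ of $\cInd_{KZ}^G\sigma$ the space $D_1(\sigma, \pi')$ is non-zero: fixing $v_0 \in \sigma^{I_1}$ non nul, the image $\overline{[\id, v_0]}$ dans $\pi'$ est non nulle, appartient trivialement \`a $I^+(\sigma,\pi')$ puisque $\id \in P^+$, et appartient \`a $I^-(\sigma,\pi')$ d'apr\`es la proposition \ref{lemma-R_n<R_n+2}(i).

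Soit alors $0 = \pi_0 \subsetneq \pi_1 \subsetneq \cdots \subsetneq \pi_n \subseteq \pi$ une cha\^\i ne strictement croissante, que l'on raffine de telle sorte que chaque $\pi_i/\pi_{i-1}$ soit irr\'eductible (les sous-repr\'esentations non nulles poss\`edent des $I_1$-invariants non nuls par le caract\`ere pro-$p$ de $I_1$, et l'admissibilit\'e de $\pi$ assure l'existence de sous-objets irr\'eductibles dans chaque $\pi_{i}/\pi_{i-1}$). J'applique alors la proposition \ref{prop-exact-complexe} \`a chaque suite exacte courte
$$0 \lra \pi_i/\pi_{i-1} \lra \pi/\pi_{i-1} \lra \pi/\pi_i \lra 0.$$
La partie (i) fournit une injection $\alpha_i : D_1(\pi_i/\pi_{i-1}) \hookrightarrow D_1(\sigma, \pi/\pi_{i-1})$ dont l'image est contenue dans $\pi_i/\pi_{i-1}$, noyau de la projection $\pi/\pi_{i-1} \twoheadrightarrow \pi/\pi_i$; la partie (ii) fournit une surjection $\beta_i : D_1(\sigma, \pi/\pi_{i-1}) \twoheadrightarrow D_1(\sigma, \pi/\pi_i)$ dont le noyau contient donc l'image de $\alpha_i$. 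Or $D_1(\pi_i/\pi_{i-1}) \supseteq (\pi_i/\pi_{i-1})^{I_1} \neq 0$ par la proposition \ref{prop-I1-invariant-in-D1} appliqu\'ee \`a la repr\'esentation irr\'eductible $\pi_i/\pi_{i-1}$. Il s'ensuit que le noyau de $\beta_i$ est non nul, d'o\`u
$$\dim D_1(\sigma, \pi/\pi_i) \leq \dim D_1(\sigma, \pi/\pi_{i-1}) - 1.$$

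En it\'erant, $\dim D_1(\sigma, \pi/\pi_n) \leq d - n$. Mais $\pi_n \subsetneq \pi$ par hypoth\`ese, donc $\pi/\pi_n$ est un $G$-quotient non trivial de $\cInd_{KZ}^G\sigma$ et l'observation pr\'eliminaire donne $\dim D_1(\sigma, \pi/\pi_n) \geq 1$. On en d\'eduit $n \leq d - 1$, ce qui borne la longueur de toute cha\^\i ne strictement croissante et entra\^\i ne la finitude de la longueur de $\pi$.

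Le point d\'elicat sera de justifier que la proposition \ref{prop-exact-complexe} reste applicable \`a chaque \'etape, c'est-\`a-dire que $\pi/\pi_{i-1}$ reste admissible alors que $\pi$ l'est. En caract\'eristique $p$, cela n'est pas automatique pour un quotient quelconque; mais comme $\pi_{i-1}$ est de longueur finie (extension it\'er\'ee d'irr\'eductibles admissibles), on contr\^ole $(\pi/\pi_{i-1})^{I_1}$ via la suite exacte longue de cohomologie de $I_1$, ou, plus simplement, on observe que la d\'emonstration de la partie (ii) de la proposition n'invoque l'admissibilit\'e de l'ambiante qu'\`a travers la partie (i), qui peut \^etre appliqu\'ee ici car les $(\pi_i/\pi_{i-1})^{I_1}$ se rel\`event dans $\pi^{I_1}$ (de dimension finie).
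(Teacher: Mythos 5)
Your argument reproduces, essentially verbatim, the paper's case (i) (mixed characteristic), where admissibility passes to $G$-quotients by a theorem of Vign\'eras; there, an induction on $\dim D_1(\sigma,\pi)$ via the proposition \ref{prop-exact-complexe} is immediate and your chain formulation is a correct repackaging of it. Your preliminary observation that $D_1(\sigma,\pi')\neq 0$ for any non-trivial quotient $\pi'$ of $\cInd_{KZ}^G\sigma$ is also correct and matches what the paper implicitly uses.

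The gap is exactly the point you flag at the end, and neither of your two proposed repairs holds up. When $F$ has characteristic $p$, a quotient of an admissible representation by an admissible (even irreducible) subrepresentation need not be admissible, and this obstruction is precisely why the paper splits the proof into two cases and devotes all of \S\ref{subsubsection-coro-cas(ii)} to it. Your first remedy (the long exact sequence for $I_1$-cohomology) would require $H^1(I_1,\pi_{i-1})$ to be finite-dimensional; but $\pi_{i-1}$ is an infinite-dimensional admissible representation, and no such finiteness is available in characteristic $p$ — if it were, admissibility of quotients would follow directly and there would be nothing to prove. Your second remedy (lifting $(\pi_i/\pi_{i-1})^{I_1}$ into $\pi^{I_1}$) presupposes surjectivity of $\pi^{I_1}\to(\pi/\pi_{i-1})^{I_1}$, which is the same vanishing of $H^1(I_1,\pi_{i-1})$ in disguise, and moreover $\dim\pi^{I_1}<\infty$ would then force $(\pi/\pi_{i-1})^{I_1}$ to be finite-dimensional, i.e. you would already be assuming what you need. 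There is also a secondary instance of the same issue: to refine your chain so that each $\pi_i/\pi_{i-1}$ is irreducible, you need each $\pi/\pi_{i-1}$ to contain an irreducible subobject, which again implicitly uses admissibility of the quotient.

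What the paper actually does in the equal-characteristic case is quite different from either of your suggestions: it first observes that $D_1(\pi_1)$ finite-dimensional together with the th\'eor\`eme \ref{theorem-main}(ii) forces the irreducible subrepresentation $\pi_1$ to be \emph{non-supersingular}; then, assuming $\pi_2^{I_1}$ is infinite-dimensional, it produces an $\overline{x}\in\pi_2^{I_1}\setminus D_1(\sigma,\pi_2)$ and derives a contradiction using the structure of $\langle P^+\cdot v_0\rangle\subset\pi_1$ as an injective object of $\Rep_{I_1\cap U^+}$ (lemma \ref{lemma-P+v=inj}), plus the \emph{lemme} \ref{lemma-Sm-in-P+v0} and a case split according to whether $\pi_1$ is a character or a series. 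None of this is captured by your proposal, so the characteristic-$p$ case remains open in your write-up.
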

\begin{proof}
Comme $\pi$ est admissible, elle admet une sous-repr\'esentation
irr\'eductible $\pi_1$ de $G$. Notons $\pi_2$ le quotient de
$\pi$ par $\pi_1$. Alors la proposition \ref{prop-exact-complexe}
entra\^ine que
\[\dim_{\bFp}D_1(\sigma,\pi)\geq \dim_{\bFp}D_1(\pi_1)+\dim_{\bFp}D_1(\sigma,\pi_2).\]
On a deux cas \`a distinguer selon la caract\'eristique de $F$:

\begin{enumerate}
\item[(i)] Le cas o\`u $F$ est une extension finie de $\Q_p$. Alors d'apr\`es \cite[th\'eor\`eme 2]{Vi}, $\pi_2$ est aussi admissible, donc
une r\'ecurrence imm\'ediate sur la dimension de $D_1(\sigma,\pi)$
permet de conclure.

\item[(ii)] Le cas o\`u $F$ est de caract\'eristique $p$ sera trait\'e au \S\ref{subsubsection-coro-cas(ii)}.
\end{enumerate}
\end{proof}


Comme application, on d\'etermine compl\`etement $D_1(\sigma,\pi)$ dans le cas o\`u $F=\Q_p$:
\begin{cor}\label{coro-Ollivier}
Supposons $F=\Q_p$. Alors $D_1(\sigma,\pi)=\pi^{I_1}$ et $D_1(\sigma,\pi)$ est de dimension finie.
\end{cor}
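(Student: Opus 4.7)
Le plan est de factoriser la surjection $\cInd_{KZ}^G\sigma \twoheadrightarrow \pi$ via un quotient auxiliaire $\pi_0 := \cInd_{KZ}^G\sigma / P(T)$, d'appliquer la proposition \ref{prop-I(sigma)/P(T)} \`a $\pi_0$, puis de propager le r\'esultat de $\pi_0$ \`a $\pi$ par d\'evissage. D'abord, comme dans la preuve de la proposition \ref{lemma-R_n<R_n+2}, il existe $P(T) \in \bFp[T]$ de degr\'e $k \geq 1$ tel que la surjection se factorise par $\pi_0$. L'hypoth\`ese $F = \Q_p$ (via \cite[th\'eor\`eme 3.2.4]{Br1} appliqu\'e aux facteurs lin\'eaires de $P(T)$ et en utilisant la gauche-exactitude des $I_1$-invariants pour une filtration de $\pi_0$ par des $\cInd_{KZ}^G\sigma/(T-\lambda_i)$) entra\^ine que $\pi_0^{I_1}$ est de dimension finie, et donc que $\pi_0$ est admissible. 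La proposition \ref{prop-I(sigma)/P(T)} donne $D_1(\sigma,\pi_0) \subseteq \pi_0^{I_1}$, qui est ainsi de dimension finie, et le corollaire \ref{corollaire-deuxcas=} assure que $\pi_0$ est de longueur finie.

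L'\'etape cruciale sera ensuite d'\'etablir que la fl\`eche naturelle $D_1(\sigma,\pi_0) \to D_1(\sigma,\pi)$ (bien d\'efinie puisque $I^+(\sigma,\pi_0)$ et $I^-(\sigma,\pi_0)$ se surjectent sur $I^+(\sigma,\pi)$ et $I^-(\sigma,\pi)$ respectivement) est surjective. Je proc\'ederais par r\'ecurrence sur la longueur du noyau de $\pi_0 \twoheadrightarrow \pi$: \`a chaque pas, on quotiente par une sous-repr\'esentation irr\'eductible et l'on applique la proposition \ref{prop-exact-complexe}(ii). L'admissibilit\'e de chacun des quotients interm\'ediaires est garantie par la longueur finie de $\pi_0$ et son admissibilit\'e, qui font de chacun d'eux un sous-quotient admissible.

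Enfin, par ce qui pr\'ec\`ede $D_1(\sigma,\pi)$ est l'image dans $\pi$ d'\'el\'ements de $\pi_0^{I_1}$, donc est contenu dans $\pi^{I_1}$; l'inclusion r\'eciproque est fournie par le corollaire \ref{coro-adm-I1-in-D1} appliqu\'e au quotient admissible $\pi$. La dimension finie de $D_1(\sigma,\pi) = \pi^{I_1}$ d\'ecoule alors directement de l'admissibilit\'e de $\pi$. Le point d\'elicat me semble \^etre l'it\'eration de la proposition \ref{prop-exact-complexe}(ii), qui exige une v\'erification soigneuse que l'admissibilit\'e se transmet \`a chaque sous-quotient interm\'ediaire; ceci repose de mani\`ere essentielle sur la longueur finie de $\pi_0$ \'etablie au premier pas.
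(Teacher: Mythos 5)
Votre preuve suit essentiellement la même stratégie que celle du papier : factorisation à travers $\pi_0=\cInd_{KZ}^G\sigma/P(T)$, admissibilité et longueur finie de $\pi_0$, calcul de $D_1(\sigma,\pi_0)$ via la proposition \ref{prop-I(sigma)/P(T)}, puis dévissage par itération de la proposition \ref{prop-exact-complexe}(ii), et conclusion par le corollaire \ref{coro-adm-I1-in-D1}. Le seul point que vous glossez légèrement est l'admissibilité des quotients intermédiaires $\pi_i$ : elle ne découle pas de la longueur finie seule, mais du théorème de Vignéras \cite[théorème 2]{Vi} affirmant qu'un quotient d'une représentation admissible par une sous-$G$-représentation reste admissible lorsque $F$ est $p$-adique (c'est exactement l'ingrédient que le papier cite pour $\pi$ et pour chaque $\pi_1$).
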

\begin{proof}
Par la preuve de \cite[lemme 3.2]{Pa3}, la surjection $\cInd_{KZ}^G\sigma\twoheadrightarrow\pi$ se factorise par $\cInd_{KZ}^G\sigma/P(T)$ avec $P(T)\in \bFp[T]$ un polyn\^ome de degr\'e $\geq 1$. Puis, en utilisant la classification de Barthel-Livn\'e \cite{BL2} et Breuil \cite{Br1}, on v\'erifie facilement que $\cInd_{KZ}^G\sigma/P(T)$ est admissible et de longueur finie. En cons\'equence, $\pi$ est aussi admissible par \cite[th\'eor\`eme 2]{Vi}, ce qui fait que la finitude de la dimension de $D_1(\sigma,\pi)$ suivra l'\'egalit\'e $D_1(\sigma,\pi)=\pi^{I_1}$.
Par ailleurs, on d\'eduit du corollaire \ref{coro-adm-I1-in-D1} et de la proposition \ref{prop-I(sigma)/P(T)} que:
\[D_1(\sigma,\cInd_{KZ}^G\sigma/P(T))=\bigl(\cInd_{KZ}^G\sigma/P(T)\bigr)^{I_1}.\]

Soit $\pi'$ le noyau du morphisme surjectif $\cInd_{KZ}^G\sigma/P(T)\twoheadrightarrow\pi$. Si $\pi'=0$, le corollaire est d\'ej\`a montr\'e par ce qui pr\'ec\`ede. Sinon, soient $\pi_1'$ une sous-repr\'esentation irr\'eductible de $\pi'$ et $\pi_1$ le quotient de $\cInd_{KZ}^G\sigma/P(T)$ par $\pi_1'$. Alors, de m\^eme que $\cInd_{KZ}^G\sigma/P(T)$, $\pi_1$ est un quotient admissible de $\cInd_{KZ}^G\sigma$ par \cite[th\'eor\`eme 2]{Vi}. En utilisant le corollaire \ref{coro-adm-I1-in-D1} et la proposition \ref{prop-exact-complexe}(ii), on obtient le diagramme commutatif suivant:
\[\xymatrix{\bigl(\cInd_{KZ}^G\sigma/P(T)\bigr)^{I_1}\ar[r]\ar@{=}[d]&\pi_1^{I_1}\ar@{^{(}->}[d]\\
D_1(\sigma,\cInd_{KZ}^G\sigma/P(T))\ar@{>>}[r]&D_1(\sigma,\pi_1),}\]
d'o\`u l'\'egalit\'e $D_1(\sigma,\pi_1)=\pi_1^{I_1}$. Comme $\cInd_{KZ}^G\sigma/P(T)$ est de longueur finie, en r\'ep\'etant cette proc\'edure, on arrive finalement \`a avoir $D_1(\sigma,\pi)=\pi^{I_1}$.
\end{proof}

\section{La condition de finitude}\label{section-condition-Pe}
Soient $\sigma$ un poids et  $\pi$ un $G$-quotient non trivial de $\cInd_{KZ}^G\sigma$. On a vu au \S\ref{section-diag-canonique}
le r\^ole important jou\'e par $D_1(\sigma,\pi)=I^+(\sigma,\pi)\cap I^-(\sigma,\pi)$.
Sans faire d'hypoth\`ese sur $F$, on donne au \S\ref{subsection-D1(sigma,pi)}  des conditions n\'ecessaires et suffisantes pour que $D_1(\sigma,\pi)$ soit un $\bFp$-espace vectoriel de dimension finie.  Au \S\ref{subsection-U+-invariants}, on consid\`ere l'espace des $I_1\cap U^+$-invariants de $I^+(\sigma,\pi)$ et l'on montre qu'il est contenu dans $D_1(\sigma,\pi)$.

Remarquons qu'au \S\ref{subsection-D(pi)=K(pi)}, on va montrer que l'espace $D_1(\sigma,\pi)$ est de dimension infinie dans le cas particulier o\`u $F$ est de caract\'eristique $p$ et $\pi$ est irr\'eductible supersinguli\`ere.  

\subsection{L'espace $D_1(\sigma,\pi)$}\label{subsection-D1(sigma,pi)}

\subsubsection{Repr\'esentations de pr\'esentation finie}
Conservons les notations ci-dessus.
En g\'en\'eral, il n'est pas clair que $D_1(\sigma,\pi)$ soit de dimension finie
ou non, ce qui conduit \`a introduire la condition suivante:
\[\emph{$D_1(\sigma,\pi)$ est de dimension finie.}\]

On dit que $\pi$ satisfait $(P_e)$ avec $e\geq 0$ un entier (fix\'e), si l'on a $D_1(\sigma,\pi)\subseteq I^+(\sigma,\pi)^{I_{e+1}}$.

\begin{rem}\label{remark-twocond-admissible}
Il est trivial que la condition <<$D_1(\sigma,\pi)$ est de dimension
finie>> est plus forte que la condition $(P_e)$ pour un entier
suffisamment grand, et elles sont \'equivalentes si $\pi$ est
admissible.

R\'eciproquement, dans le cas o\`u $\pi$ est irr\'eductible, si $D_1(\pi)$ est de
dimension finie, alors $\pi$
est admissible. Cet \'enonc\'e se d\'eduit du fait que $D_1(\pi)$
contient tous les $I_1$-invariants de $\pi$
(proposition \ref{prop-I1-invariant-in-D1}).
\end{rem}

Comme cons\'equence directe du th\'eor\`eme \ref{theorem-diag-morphisme}, on a
\begin{cor}
Si $\pi$ et $\pi'$ sont deux quotients non triviaux de
$\cInd_{KZ}^G\sigma$ v\'erifiant $(P_e)$, alors $\pi\cong\pi'$ si et seulement si, en
tant que diagrammes,
\[(\pi^{K_{e+1}},\pi^{I_{e+1}},\mathrm{can})\cong (\pi'^{K_{e+1}},\pi'^{I_{e+1}},\mathrm{can}).\]
\end{cor}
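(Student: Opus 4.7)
The plan is to derive this corollary directly from the functorial isomorphism (\ref{equation-diag-morp-D(sigma)}), which identifies $\Hom_{\DIAG}(D(\sigma,\pi),\cK(\pi'))$ with $\Hom_G(\pi,\pi')$. The hypothesis $(P_e)$ enters only to make $D(\sigma,\pi)$ visible inside the diagram of $K_{e+1}$- and $I_{e+1}$-invariants.

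First I would record the following key observation: under $(P_e)$ one has a canonical inclusion of diagrams
\[D(\sigma,\pi)\hookrightarrow(\pi^{K_{e+1}},\pi^{I_{e+1}},\mathrm{can}).\]
Indeed, $(P_e)$ states $D_1(\sigma,\pi)\subseteq I^+(\sigma,\pi)^{I_{e+1}}\subseteq\pi^{I_{e+1}}$, and since $K$ normalises $K_{e+1}$, the $KZ$-subspace $D_0(\sigma,\pi)=\langle KZ\cdot D_1(\sigma,\pi)\rangle$ lies in $\pi^{K_{e+1}}$; compatibility with \textrm{can} is automatic. The analogous inclusion holds for $\pi'$.

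The ``if'' direction is then immediate: any $G$-isomorphism $\pi\simto\pi'$ restricts to equivariant isomorphisms of $K_{e+1}$-invariants and $I_{e+1}$-invariants, producing the required diagram isomorphism. For the ``only if'' direction, given a diagram isomorphism $\phi=(\phi_0,\phi_1)$ between the two invariant-diagrams, I would form the composition of diagram morphisms
\[D(\sigma,\pi)\hookrightarrow(\pi^{K_{e+1}},\pi^{I_{e+1}},\mathrm{can})\overset{\phi}{\simto}(\pi'^{K_{e+1}},\pi'^{I_{e+1}},\mathrm{can})\hookrightarrow\cK(\pi')\]
and feed it into (\ref{equation-diag-morp-D(sigma)}) to obtain a $G$-morphism $f:\pi\to\pi'$. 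Applying the same recipe to $\phi^{-1}$ produces $g:\pi'\to\pi$.

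The remaining point, which I expect to be the only subtle step, is to check $g\circ f=\mathrm{id}_\pi$ and $f\circ g=\mathrm{id}_{\pi'}$; this should follow from the naturality in $\pi'$ of the isomorphism (\ref{equation-diag-morp-D(sigma)}). Concretely, the diagram morphism $D(\sigma,\pi)\to\cK(\pi)$ attached to $g\circ f$ is obtained by post-composing the diagram morphism attached to $f$ with $\cK(g)$, which unfolds to
\[D(\sigma,\pi)\hookrightarrow(\pi^{K_{e+1}},\pi^{I_{e+1}},\mathrm{can})\xrightarrow{\phi^{-1}\circ\phi}(\pi^{K_{e+1}},\pi^{I_{e+1}},\mathrm{can})\hookrightarrow\cK(\pi),\]
i.e. the canonical inclusion $D(\sigma,\pi)\hookrightarrow\cK(\pi)$; the latter corresponds to $\mathrm{id}_\pi$ under (\ref{equation-diag-morp-D(sigma)}). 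Symmetrically one gets $f\circ g=\mathrm{id}_{\pi'}$. Thus $f$ is a $G$-isomorphism, concluding the proof once the naturality argument is spelled out.
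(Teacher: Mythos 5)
Your proof is correct and follows essentially the same route as the paper: use $(P_e)$ to embed $D(\sigma,\pi)$ into the diagram of $I_{e+1}$- and $K_{e+1}$-invariants, transport the given diagram isomorphism into a morphism $D(\sigma,\pi)\to\cK(\pi')$, and apply the functorial isomorphism (\ref{equation-diag-morp-D(sigma)}). The paper just states the ``inverse'' step more tersely (the inverse diagram morphism induces the inverse $G$-morphism), while you expand that into the explicit naturality check.
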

\begin{proof}
En effet, soit \[(\varphi_0,\varphi_1):(\pi^{K_{e+1}},\pi^{I_{e+1}},\mathrm{can})\simto (\pi'^{K_{e+1}},\pi'^{I_{e+1}},\mathrm{can})\]
un isomorphisme de diagrammes. Comme $D(\sigma,\pi)\subseteq (\pi^{K_{e+1}},\pi^{I_{e+1}},\mathrm{can})$ par hypoth\`ese, compte-tenu de l'inclusion naturelle $(\pi'^{K_{e+1}},\pi'^{I_{e+1}},\mathrm{can})\subseteq \cK(\pi')$, on obtient un morphisme $D(\sigma,\pi)\ra \cK(\pi')$ qui induit un $G$-morphisme non trivial $\pi\ra\pi'$ par (\ref{equation-diag-morp-D(sigma)}). C'est un isomorphisme puisque $(\varphi_0,\varphi_1)^{-1}$ en induit un inverse.
\end{proof}

\vv


\begin{thm}\label{theorem-Pe}
Soit $\pi$ un quotient non trivial de $\cInd_{KZ}^G\sigma$ et soit $R(\sigma,\pi)$ le noyau de la projection $\cInd_{KZ}^G\sigma\twoheadrightarrow\pi$. Alors les propri\'et\'es
suivantes sont \'equivalentes:

(i) $D_1(\sigma,\pi)$ est un $\bFp$-espace vectoriel de dimension finie;

(ii) $R(\sigma,\pi)$ est de type fini en tant que $\bFp[G]$-module.
\end{thm}

\begin{proof}
%
Prouvons l'implication (ii)$\Rightarrow$(i). Soit
$\{f_1,\cdots,f_k\}$ un syst\`eme g\'en\'erateur de $R(\sigma,\pi)$ en
tant que $\bFp[G]$-module et soit $m\geq 0$ un entier tel que tout
$f_i$ soit contenu dans $\oplus_{0\leq n\leq m}R_n(\sigma)$. Soit $M$
l'image de $\oplus_{0\leq n\leq m-1}({R_n^+(\sigma)}\oplus R_n^-(\sigma))$ dans $\pi$. On va d\'emontrer que $D_1(\sigma,\pi)\subseteq M$, ce qui suffit pour conclure puisque $M$ est bien de dimension finie. D'apr\`es le lemme \ref{lemma-cK=intesection}, cela revient \`a v\'erifier que $\Phi_{\sigma}(gf_i)\in M$ pour tout $g\in G$ et tout $1\leq i\leq k$. Par d\'efinition, $M$ est stable par $\Pi$, donc on peut supposer que $g\in P^+KZ$ en utilisant la d\'ecomposition (\ref{equation-vigneras}) et le lemme \ref{lemma-Phi(Pi)=-Pi(Phi)}.

\'Ecrivons $g=g^{(n)}\cdots g^{(1)}k$ comme dans (\ref{equation-g=g^i-vigneras}) avec $n=\ell(g)$ sa longueur. Si $g\in KZ$, alors $gf_i\in \oplus_{0\leq n\leq m}R_n(\sigma)$ et l'\'enonc\'e d\'ecoule imm\'ediatement de la d\'efinition de $\Phi_{\sigma}$. Si $\ell(g)\geq 1$, un argument analogue \`a celui de la proposition \ref{prop-cal-D1} utilisant le lemme \ref{lemma-cK-et-D1(pi)} permet aussi de conclure.

\vv 

Passons \`a la d\'emonstration de (i)$\Rightarrow$(ii).  Commen\c{c}ons par remarquer que la surjection
 $\cInd_{KZ}^G\sigma\twoheadrightarrow \pi$ se factorise par
 \[\cInd_{KZ}^G\sigma\twoheadrightarrow \cInd_{KZ}^G\sigma/P(T)\twoheadrightarrow\pi\]
 pour un polyn\^ome $P(T)\in\bFp[T]$ (voir la preuve de \cite[lemme 3.2]{Pa3}). D'apr\`es la proposition \ref{lemma-R_n<R_n+2}(ii), il existe $u\in\N$ tel que
 \begin{equation}\label{equation-f-in>0}\oplus_{0\leq n\leq u}R_n(\sigma)\subseteq \oplus_{1\leq n\leq u}R_n(\sigma,\pi')+P(T)\cInd_{KZ}^G\sigma.\end{equation}
Puisque
$D_1(\sigma,\pi)$ est suppos\'e de dimension finie, il existe un entier
$m\geq \max\{\deg P(T),u\}$ tel que $D_1(\sigma,\pi)$ est contenu dans l'image de
$\oplus_{0\leq n\leq m}R_n(\sigma)$ dans $\pi$. Ici, la condition sur $m$ est pour assurer, d'une part, que $P(T)\sigma\subseteq \oplus_{0\leq n\leq m}R_n(\sigma)$, d'autre part, que (\ref{equation-f-in>0}) reste vrai si l'on remplace $u$ par $m$.
Posons  $\ker=R(\sigma,\pi)\cap (\oplus_{0\leq n\leq m}R_n(\sigma))$.
On a alors $\langle G\cdot \ker\rangle\subseteq R(\sigma,\pi)$, et pour
achever la preuve il suffit de v\'erifier l'inclusion
$R(\sigma,\pi)\subseteq\langle G\cdot \ker\rangle$, ou plut\^ot, l'inclusion
\[R(\sigma,\pi)\cap \bigl(\oplus_{0\leq n\leq k}R_n(\sigma)\bigr)\subseteq \langle G\cdot\ker\rangle\]
pour tout $k\geq 0$. Par le choix de $m$,
 $P(T)(\cInd_{KZ}^G\sigma)\subseteq
 \langle G\cdot \ker\rangle$ puisque $P(T)\sigma\subseteq \ker$  et que $T$ est un endomorphisme $G$-\'equivariant de $\cInd_{KZ}^G\sigma$.

Soit $f\in R(\sigma,\pi)\cap\bigl(\oplus_{0\leq n\leq k}R_{n}(\sigma)\bigr)$. On va d\'emontrer que
$f\in \langle G\cdot \ker\rangle$ par r\'ecurrence sur $k$ comme
suit. D'abord, on peut supposer que $k\geq m+1$ par la d\'efinition de $\ker$,
puis que $f\in \oplus_{1\leq n\leq k} R_{n}(\sigma)$ gr\^ace \`a (\ref{equation-f-in>0}) et au fait que $P(T)\cInd_{KZ}^G\sigma\subseteq \langle G\cdot\ker\rangle$. L'espace
$\oplus_{1\leq n\leq k} R_{n}(\sigma)$ \'etant engendré par $\oplus_{0\leq n\leq k-1} R_{n}^-(\sigma)$ en tant que
$K$-repr\'esentation,
on peut \'ecrire
\[f=\summ_{\lambda\in\F_q}\matr {\varpi}{[\lambda]}01f_{\lambda}+{\mathrm{\Pi}}(f_{{\mathrm{\Pi}}})\]
avec $f_{\lambda},f_{{\mathrm{\Pi}}}\in \oplus_{0\leq n\leq k-1} R_{n}^+(\sigma)$. Comme
$f\in R(\sigma,\pi)$, le lemme \ref{lemme-deschoices} implique que l'image de $f_{\lambda}$ (resp. $f_{{\mathrm{\Pi}}}$) dans $\pi$, not\'ee $\overline{f_{\lambda}}$ (resp. $\overline{f_{{\mathrm{\Pi}}}}$), est
contenue dans $D_1(\sigma,\pi)$. Choisissons un rel\`evement de
$\overline{f_{\lambda}}$ (resp. $\overline{f_{{\mathrm{\Pi}}}}$) dans
$\oplus_{0\leq n\leq m}R_n(\sigma)$, disons $f_{\lambda}'$ (resp. $f'_{{\mathrm{\Pi}}}$),
alors (en notant que $k\geq m+1$)
\[f_{\lambda}-f_{\lambda}',\ f_{{\mathrm{\Pi}}}-f_{{\mathrm{\Pi}}}'\in R(\sigma,\pi)\cap
(\oplus_{0\leq n\leq k-1}R_n(\sigma)),\] et par hypoth\`ese de r\'ecurrence, on voit
que
\[f_{\lambda}-f_{\lambda}',\ f_{{\mathrm{\Pi}}}-f_{{\mathrm{\Pi}}}'\in\langle G\cdot \ker\rangle.\]
Maintenant si l'on pose
\[f'=\summ_{\lambda\in\F_q} \matr {\varpi}{[\lambda]}01 f'_{\lambda}+{\mathrm{\Pi}}(f'_{{\mathrm{\Pi}}}),\]
alors par d\'efinition $f'\in\ker$ et donc $f\in \langle G\cdot
\ker\rangle$, ce qui termine la d\'emonstration.
\end{proof}


\subsubsection{Repr\'esentations de pr\'esentation standard}

On fixe $\pi$ une repr\'esentation lisse de type fini de $G$ (ayant un caract\`ere central). Comme dans \cite{Co}, on note $\mathcal{W}(\pi)$ l'ensemble des sous-espaces de dimension finie de $\pi$, stables par $KZ$, engendrant $\pi$ comme $\bFp[G]$-module. Notons que $\mathcal{W}(\pi)$ n'est pas vide. Si $W\in \mathcal{W}(\pi)$, on dispose par r\'eciprocit\'e de Frobenius d'un morphisme surjectif $G$-\'equivariant $\cInd_{KZ}^GW\twoheadrightarrow\pi$ et on note $R(W,\pi)$ le noyau de ce morphisme. Cela justifie la notation $R(\sigma,\pi)$ lorsque $W=\sigma$ est irr\'eductible. On dit que $\cInd_{KZ}^GW\twoheadrightarrow\pi$ est une \emph{pr\'esentation finie} de $\pi$ si $R(W,\pi)$ est de type fini comme $\bFp[G]$-module. 

\begin{prop}\label{lemma-equi-presfinie-I}
Les deux conditions suivantes sont \'equivalentes:


(i) $\pi$ admet une pr\'esentation finie; 

(ii) pour tout $W\in\mathcal{W}(\pi)$, $\cInd_{KZ}^GW\twoheadrightarrow\pi$ est une pr\'esentation finie.
\end{prop}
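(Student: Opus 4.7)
L'implication $(ii)\Rightarrow(i)$ est imm\'ediate: $\pi$ \'etant de type fini, $\mathcal{W}(\pi)$ est non vide, et toute $W$ dans $\mathcal{W}(\pi)$ fournit alors une pr\'esentation finie sous $(ii)$.

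Pour $(i)\Rightarrow(ii)$, la strat\'egie est la suivante: partant de $W_0\in\mathcal{W}(\pi)$ tel que $R(W_0,\pi)$ soit de type fini et de $W\in\mathcal{W}(\pi)$ arbitraire, on introduit $W'':=W+W_0 \in\mathcal{W}(\pi)$ qui contient \`a la fois $W_0$ et $W$, et l'on compare les trois noyaux au moyen des inclusions $W_0\subseteq W''$ et $W\subseteq W''$. Pour chaque inclusion $W'\subseteq W''$ (avec $W'\in\{W_0,W\}$), l'exactitude du foncteur $\cInd_{KZ}^G$ appliqu\'ee \`a $0\to W'\to W''\to W''/W'\to 0$, combin\'ee au lemme du serpent pour les surjections vers $\pi$, fournit la suite exacte courte
\[0 \to R(W',\pi) \to R(W'',\pi) \to \cInd_{KZ}^G(W''/W') \to 0,\]
dans laquelle $\cInd_{KZ}^G(W''/W')$ est de type fini car $W''/W'$ est de dimension finie. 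Dans le sens ``montant'' ($W'=W_0$), on en d\'eduit imm\'ediatement que $R(W'',\pi)$ est de type fini, comme extension de deux $\bFp[G]$-modules de type fini.

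Le sens ``descendant'' ($W'=W$), o\`u l'on dispose de $R(W'',\pi)$ de type fini et l'on veut en d\'eduire $R(W,\pi)$ de type fini, constitue l'obstacle principal. L'id\'ee est de construire des rel\`evements explicites de la partie ``quotient''. Fixant une base $\bar{e}_1,\ldots,\bar{e}_s$ de $W''/W$ avec rel\`evements $\tilde{e}_i\in W''$, on utilise que $W$ engendre $\pi$ comme $\bFp[G]$-module pour \'ecrire $\tilde{e}_i=\sum_j h_{ij}w_{ij}$ dans $\pi$ avec $h_{ij}\in G$ et $w_{ij}\in W$, d'o\`u des rel\`evements explicites $r_i := [1,\tilde{e}_i]-\sum_j h_{ij}[1,w_{ij}]\in R(W'',\pi)$ des g\'en\'erateurs $[1,\bar{e}_i]$ de $\cInd_{KZ}^G(W''/W)$. \'Etant donn\'ee une famille g\'en\'eratrice finie $(g_k)$ de $R(W'',\pi)$, on modifie chaque $g_k$ en lui soustrayant une combinaison $\bFp[G]$-lin\'eaire convenable des $r_i$ pour obtenir un \'el\'ement $g_k'\in R(W,\pi)$. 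Une famille g\'en\'eratrice finie de $R(W,\pi)$ s'obtient alors en adjoignant aux $\{g_k'\}$ les \'el\'ements $\sum_i A_{li}r_i\in R(W,\pi)$, o\`u les $s$-uplets $(A_{l1},\ldots,A_{ls})\in\bFp[G]^s$ parcourent une famille g\'en\'eratrice du module des syzygies de la surjection $\bFp[G]^s\twoheadrightarrow\cInd_{KZ}^G(W''/W)$ envoyant la base standard sur $([1,\bar{e}_i])_i$. La v\'erification d\'elicate est donc la finitude de ce module de syzygies, c'est-\`a-dire la pr\'esentation finie de $\cInd_{KZ}^G V$ comme $\bFp[G]$-module lisse pour $V$ de dimension finie admettant un caract\`ere central; celle-ci r\'esulte du fait que $V$ est tu\'ee par un certain $K_n$ et que les relations $h\cdot[1,v]-[1,hv]=0$ pour $h\in KZ$ sont toutes cons\'equences d'un nombre fini d'entre elles, param\'etr\'ees par le groupe fini $K/K_n$ et par l'action d'une uniformisante de $Z$ (qui op\`ere scalairement via le caract\`ere central).
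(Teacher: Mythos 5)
The upward step ($W_0\subset W''=W_0+W$, so $R(W'',\pi)$ is an extension of $\cInd_{KZ}^G(W''/W_0)$ by $R(W_0,\pi)$) is correct and is exactly what the paper invokes via \cite[proposition 1]{Vi2}. The gap is in the downward step, precisely at the point you flag at the end: you need the syzygy module of $\bFp[G]^s\twoheadrightarrow\cInd_{KZ}^G(W''/W)$ to be finitely generated, i.e.\ that $\cInd_{KZ}^G V$ is finitely presented as an abstract $\bFp[G]$-module for $V$ smooth of finite dimension with central character. This is false. Since $\cInd_{KZ}^GV\cong\bFp[G]\otimes_{\bFp[KZ]}V$ and $\bFp[G]$ is free, hence faithfully flat, as a right $\bFp[KZ]$-module, finite presentation over $\bFp[G]$ would force $V$ to be finitely presented over $\bFp[KZ]$; restricting scalars along a finite-index open subgroup $H=K_nZ_0\subset KZ$ (with $Z_0$ an open subgroup of $Z$ acting trivially on $V$) would then yield a finite generating set $\{h_1-1,\dots,h_r-1\}$ of the augmentation ideal of $\bFp[H]$ as a left ideal. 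No such set can suffice: the quotient of $\bFp[H]$ by the left ideal they generate is the free module $\bFp\bigl[H/\langle h_1,\dots,h_r\rangle\bigr]$ on the coset space, and the \emph{abstractly} generated subgroup $\langle h_1,\dots,h_r\rangle$ is countable while $H\supset K_n$ is uncountable. Your heuristic that the relations $h\cdot[1,v]-[1,hv]=0$ are all consequences of finitely many, parametrized by the finite group $K/K_n$ and the uniformizer, misses exactly this: the uncountably many relations $h\cdot[1,v]-[1,v]$ for $h\in K_n$ are not redundant in the abstract group algebra $\bFp[G]$.

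The paper avoids this obstruction entirely by a genuinely different mechanism: the lemme \ref{lemma-typefini=finie-W} (generalizing the th\'eor\`eme \ref{theorem-Pe}) identifies finite generation of $R(W,\pi)$ over $\bFp[G]$ with finite-dimensionality of the $\bFp$-vector space $I^+(W,\pi)\cap I^-(W,\pi)$, a condition that manifestly descends along inclusions $W_1\subset W_2$ because these induce injections $I^+(W_1,\pi)\cap I^-(W_1,\pi)\hookrightarrow I^+(W_2,\pi)\cap I^-(W_2,\pi)$. Your snake-lemma setup is a reasonable warm-up and the upward direction is sound, but the crucial descent cannot be carried out by abstract homological algebra over $\bFp[G]$; it requires the combinatorial substitute for noetherianity that the paper develops.
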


Signalons que cette proposition n'est pas \'evidente, parce que $\bFp[G]$ n'est pas noetherien. La d\'emonstration va demander un peu de pr\'eparation: on a besoin de g\'en\'eraliser quelques r\'esultats du \S\ref{subsection-preliminaire}.\vv

Si $W\in\mathcal{W}(\pi)$, on pose pour $n\geq 0$,
\[R_n^+(W):=\Bigl[\matr{\varpi^n}{\cO}01,W\Bigr],\ \ R_n^-(W):=\Pi\cdot R_n^+(W).\]
On pose \'egalement
\[I^+(W):=[P^+,W], \ \ I^-(W):=\Pi\cdot I^+(W)  \]
de telles sortes que $I^+(W)=\oplus_{n\geq0}R_n^+(W)$, $I^-(W)=\oplus_{n\geq 0}R_n^-(W)$ et $\cInd_{KZ}^GW=I^+(W)\oplus I^-(W)$ par la d\'ecomposition (\ref{equation-vigneras}). Remarquons que ces sous-espaces de $\cInd_{KZ}^GW$ sont tous stables par $IZ$.

On note $I^+(W,\pi)$ (resp. $I^-(W,\pi)$, $R_n^+(W,\pi)$, $R_n^-(W,\pi)$) l'image de $I^+(W)$ (resp. $I^-(W)$, $R_n^+(W)$, $R_n^-(W)$) dans $\pi$ et on d\'efinit le morphisme compos\'e (pareil \`a $\Phi_{\sigma}$ du \S\ref{subsection-preliminaire}(\ref{equation-define-Phi})):
\[\Phi_{W}=\Phi_{W,\pi}:\cInd_{KZ}^GW\twoheadrightarrow I^-(W)\twoheadrightarrow I^-(W,\pi)\hookrightarrow \pi. \]

On peut v\'erifier que les lemmes \ref{lemma-decomposition}, \ref{lemma-cK=intesection}, \ref{lemma-Phi(Pi)=-Pi(Phi)} et \ref{lemma-cK-et-D1(pi)}, qui sont \'enonc\'es pour $W=\sigma$ irr\'eductible, restent vrais pour $W\in\mathcal{W}(\pi)$ g\'en\'eral. En fait, le fait que $\sigma$ est irr\'eductible, ainsi que l'op\'erateur $T$ associ\'e \`a $\cInd_{KZ}^G\sigma$, n'intervient jamais dans leurs preuves, mais seulement la stabilit\'e de $\sigma$ par $KZ$ et certaines d\'ecompositions de matrices dans $G$ y interviennent.  En particulier, on a
\begin{equation}\label{equation-I+(W,pi)=phi_W(R)}I^+(W,\pi)\cap I^-(W,\pi)=\Phi_{W}\bigl(R(W,\pi)\bigr).\end{equation}
De plus, on a un analogue du th\'eor\`eme \ref{theorem-Pe}:
\begin{lem}\label{lemma-typefini=finie-W}
Avec les notations pr\'ec\'edentes, $I^+(W,\pi)\cap I^-(W,\pi)$ est de dimension finie sur $\bFp$ si et seulement si $R(W,\pi)$ est de type fini en tant que $\bFp[G]$-module.
\end{lem}
\begin{proof}
On peut argumenter comme dans le th\'eor\`eme \ref{theorem-Pe} pour la direction $\Longleftarrow$, car les lemmes \ref{lemma-cK=intesection}, \ref{lemma-Phi(Pi)=-Pi(Phi)} et \ref{lemma-cK-et-D1(pi)} restent vrais dans ce cas plus g\'en\'eral. Supposons donc que $I^+(W,\pi)\cap I^-(W,\pi)$ est de dimension finie. On va proc\'eder par r\'ecurrence sur le nombre des facteurs de Jordan-H\"{o}lder de $W$. D'abord, le cas o\`u $W$ est irr\'eductible se d\'eduit du th\'eor\`eme \ref{theorem-Pe}. Supposons que $W$ est r\'eductible et soit $\sigma\subset W$ une sous-$KZ$-repr\'esentation irr\'eductible. En notant $\pi_1$ l'image de $\cInd_{KZ}^G\sigma$ dans $\pi$ et $\pi_2$ le quotient de $\pi$ par $\pi_1$, on obtient un diagramme commutatif \`a lignes et \`a colonnes exactes:
\begin{equation}\label{equation-diag-presfini}
\xymatrix{&0\ar[d]&0\ar[d]&0\ar[d]&\\
0\ar[r]&R(\sigma,\pi_1)\ar[r]\ar[d]&R(W,\pi)\ar[r]\ar[d]&R(W/\sigma,\pi_2)\ar[d]\ar[r]&0\\
0\ar[r]&\cInd_{KZ}^G\sigma\ar[r]\ar[d]&\cInd_{KZ}^GW\ar[r]\ar[d]&\cInd_{KZ}^GW/\sigma\ar[r]\ar[d]&0 \\
0\ar[r]&\pi_1\ar[r]\ar[d]&\pi\ar[r]\ar[d]&\pi_2\ar[r]\ar[d]&0\\
&0&0&0&,}\end{equation}
qui, en utilisant (\ref{equation-I+(W,pi)=phi_W(R)}), induit une injection \[D_1(\sigma,\pi_1)=I^+(\sigma,\pi_1)\cap I^-(\sigma,\pi_1)\hookrightarrow I^+(W,\pi)\cap I^-(W,\pi)\] et une surjection \[I^+(W,\pi)\cap I^-(W,\pi)\twoheadrightarrow I^+(W/\sigma,\pi_2)\cap I^-(W/\sigma,\pi_2).\] En particulier, $D_1(\sigma,\pi)$ et $I^+(W/\sigma,\pi_2)\cap I^-(W/\sigma,\pi_2)$ sont de dimension finie. D'apr\`es respectivement le th\'eor\`eme \ref{theorem-Pe} et l'hypoth\`ese de r\'ecurrence, on voit que $R(\sigma,\pi_1)$ et $R(W/\sigma,\pi_2)$ sont de type fini comme $\bFp[G]$-modules et donc $R(W,\pi)$ l'est aussi. Cela termine la d\'emonstration.
\end{proof}

\begin{proof}[D\'emonstration de la proposition \ref{lemma-equi-presfinie-I}]
\'Evidemment, la proposition se r\'eduit \`a montrer l'\'enonc\'e suivant: \emph{si $W_1\subset W_2$ sont deux \'el\'ements de $\mathcal{W}(\pi)$, alors la pr\'esentation $\cInd_{KZ}^GW_1\twoheadrightarrow\pi$ est finie si et seulement si la pr\'esentation $\cInd_{KZ}^GW_2\twoheadrightarrow\pi$ l'est}. De plus, par \cite[proposition 1]{Vi2}, si la pr\'esentation $\cInd_{KZ}^GW_1\twoheadrightarrow\pi$ est finie, alors $\cInd_{KZ}^GW_2\twoheadrightarrow\pi$ l'est aussi.

Supposons que $\cInd_{KZ}^GW_2\twoheadrightarrow\pi$ est une pr\'esentation finie, ce qui implique que $I^+(W_2,\pi)\cap I^+(W_2,\pi)$ est de dimension finie sur $\bFp$ d'apr\`es le lemme \ref{lemma-typefini=finie-W}. Or, l'inclusion naturelle $\cInd_{KZ}^GW_1\hookrightarrow\cInd_{KZ}^GW_2$ induit une injection
\[I^+(W_1,\pi)\cap I^-(W_1,\pi)\hookrightarrow I^+(W_2,\pi)\cap I^-(W_2,\pi).\]
On en d\'eduit que $I^+(W_1,\pi)\cap I^-(W_1,\pi)$ est aussi de dimension finie,  ce qui permet de conclure par le m\^eme lemme.
\end{proof}\vv

D'apr\`es \cite{Co}, si $W\in\mathcal{W}(\pi)$, on dit que $\cInd_{KZ}^GW\twoheadrightarrow\pi$ est \emph{une pr\'esentation standard} si $R(W,\pi)$ est engendr\'e comme $\bFp[G]$-module, par
\[R^{(0)}(W,\pi):=\biggl\{\Bigl[\matr{\varpi}001,x\Bigr]-\Bigl[\id,\matr{\varpi}001x\Bigr],\ \ x\in W\cap \matr{\varpi^{-1}}001 W\biggr\}.\]
Notons que l'on a toujours $\langle G\cdot R^{(0)}(W,\pi)\rangle\subseteq R(W,\pi)$, et que $R^{(0)}(W,\pi)=\{0\}$ si $W\cap\smatr{\varpi^{-1}}001 W=0$.
Comme $\smatr{\varpi^{-1}}001 W=\Pi\smatr0{\varpi^{-1}}{\varpi^{-1}}0 W=\Pi(W)$ et
\[\matr{\varpi^{-1}}001\cdot\Bigl[\id,\matr{\varpi}001 x\Bigr]=\Bigl[\matr01{\varpi}0\matr0{\varpi^{-1}}{\varpi^{-1}}0,\matr{\varpi}001x\Bigr]=[\Pi,\Pi^{-1}x],\]
on d\'eduit que:
\[\matr{\varpi^{-1}}001\cdot R^{(0)}(W,\pi)=\bigl\{[\id,x]-[\Pi,\Pi^{-1}(x)],\ \ x\in W\cap \Pi(W)\bigr\}.\]
D'autre part, en posant $W'=W\cap \Pi(W)$, on obtient un diagramme $(W,W',\mathrm{can})$. Compte-tenu de (\ref{equation-def-H_0}) et de (\ref{equation-def-partial}) du \S\ref{subsubsection-def-diagcan}, on a donc d\'emontr\'e le lemme suivant (je remercie B. Schraen pour m'avoir signal\'e ce fait).
\begin{lem}\label{lemma-benjamin}
Conservons les notations pr\'ec\'edentes.

(i) $\langle G\cdot R^{(0)}(W,\pi)\rangle$ co\"{i}ncide avec l'image de (voir (\ref{equation-def-H_0}))
\[\partial: \cInd_{N}^G(W'\otimes\delta_{-1})\ra \cInd_{KZ}^GW.\]
Par cons\'equent, la pr\'esentation $\cInd_{KZ}^GW\twoheadrightarrow\pi$ se factorise par $H_0\bigl((W,W',\mathrm{can})\bigr)$.

(ii) La pr\'esentation $\cInd_{KZ}^GW\twoheadrightarrow\pi$ est standard si et seulement si le morphisme surjectif $H_0\bigl((W,W',\mathrm{can})\bigr)\twoheadrightarrow\pi$ dans (i) est un isomorphisme.
\end{lem}



\begin{cor}\label{coro-presfinie-equv}
Les deux propri\'et\'es suivantes sont \'equivalentes:


(i) $\pi$ admet une pr\'esentation finie;

(ii) $\pi$ admet une pr\'esentation standard.
\end{cor}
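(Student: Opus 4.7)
Pour la direction (ii) $\Rightarrow$ (i), il suffira de remarquer que si $\cInd_{KZ}^G W\twoheadrightarrow\pi$ est une pr\'esentation standard via un $W\in\mathcal{W}(\pi)$, alors $R(W,\pi)=\langle G\cdot R^{(0)}(W,\pi)\rangle$, o\`u $R^{(0)}(W,\pi)$ est en bijection avec $W\cap\Pi(W)\subseteq W$, donc de dimension finie sur $\bFp$. Par cons\'equent, $R(W,\pi)$ sera de type fini comme $\bFp[G]$-module, d'o\`u la pr\'esentation finie.

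Pour la direction (i) $\Rightarrow$ (ii), le plan est de construire un $W\in\mathcal{W}(\pi)$ tel que $H_0\bigl((W,W\cap\Pi(W),\mathrm{can})\bigr)\cong\pi$: par le lemme \ref{lemma-benjamin}(ii), la pr\'esentation via $W$ sera alors standard. Je fixerai un $W_0\in\mathcal{W}(\pi)$ quelconque. Par la proposition \ref{lemma-equi-presfinie-I} et le lemme \ref{lemma-typefini=finie-W}, le sous-espace $D_1(W_0,\pi):=I^+(W_0,\pi)\cap I^-(W_0,\pi)\subseteq \pi$ sera de dimension finie sur $\bFp$. Je poserai alors $W:=\langle KZ\cdot D_1(W_0,\pi)\rangle$, qui est $KZ$-stable et de dimension finie puisque $D_1(W_0,\pi)$ est $IZ$-stable et $KZ/IZ$ est fini. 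En g\'en\'eralisant le th\'eor\`eme \ref{theorem-diag-morphisme} (et donc le corollaire \ref{cor-H0(D)}) au cas $W_0\in\mathcal{W}(\pi)$ g\'en\'eral, via la m\^eme construction avec un niveau $\ell_{W_0}$ d\'efini de mani\`ere analogue \`a la d\'efinition \ref{definition-niveau} et en utilisant les extensions des lemmes pr\'eliminaires annonc\'ees au d\'ebut du \S\ref{subsection-D1(sigma,pi)}, on obtiendra l'isomorphisme $H_0(D(W_0,\pi))\cong\pi$ avec $D(W_0,\pi):=(W,D_1(W_0,\pi),\mathrm{can})$; en particulier, $W$ engendrera $\pi$ comme $\bFp[G]$-module, donc $W\in\mathcal{W}(\pi)$.

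Le sous-espace $D_1(W_0,\pi)$ \'etant $N$-stable (car $\Pi I^+(W_0,\pi)=I^-(W_0,\pi)$ par d\'efinition et $\Pi I^-(W_0,\pi)=\Pi^2 I^+(W_0,\pi)=I^+(W_0,\pi)$ via l'action centrale de $\varpi$), on aura $D_1(W_0,\pi)\subseteq W\cap\Pi(W)$ dans $\pi$. L'inclusion de diagrammes $D(W_0,\pi)\hookrightarrow (W,W\cap\Pi(W),\mathrm{can})$ (identit\'e sur $W$, inclusion sur le deuxi\`eme facteur) induira par fonctorialit\'e de $H_0$ une surjection $\pi\cong H_0(D(W_0,\pi))\twoheadrightarrow H_0\bigl((W,W\cap\Pi(W),\mathrm{can})\bigr)$. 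En composant cette surjection avec la surjection canonique sur $\pi$ du lemme \ref{lemma-benjamin}(i), on retrouvera l'identit\'e de $\pi$, for\c{c}ant les deux surjections \`a \^etre des isomorphismes. L'obstacle principal sera la v\'erification de l'extension de l'isomorphisme $H_0(D(\sigma,\pi))\cong\pi$ de (\ref{equation-H0(D(sigma))}) au cas $W_0\in\mathcal{W}(\pi)$ g\'en\'eral; une fois celle-ci acquise, le reste sera une manipulation formelle de diagrammes.
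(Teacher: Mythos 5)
Your proof of (ii)\,$\Rightarrow$\,(i) is fine and coincides with the paper's (trivial) argument.

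Your proposal for (i)\,$\Rightarrow$\,(ii) takes a genuinely different route from the paper: rather than the d\'evissage the paper performs (induction on the length of $W$ via the diagram (\ref{equation-diag-presfini}) to reduce to $\pi$ a quotient of $\cInd_{KZ}^G\sigma_i$ with $\sigma_i$ irr\'eductible, and then invoking \cite[proposition~III.1.16]{Co} to glue standard presentations through extensions), you try to apply the diagram machinery directly to an arbitrary $W_0\in\mathcal{W}(\pi)$. But there is a genuine gap at exactly the step you flag as ``l'obstacle principal''. The isomorphism $H_0(D(W_0,\pi))\cong\pi$ does \emph{not} follow from the ``extensions des lemmes pr\'eliminaires annonc\'ees au d\'ebut du \S\ref{subsection-D1(sigma,pi)}''. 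The only lemmas the paper certifies as generalizing to $W\in\mathcal{W}(\pi)$ are \ref{lemma-decomposition}, \ref{lemma-cK=intesection}, \ref{lemma-Phi(Pi)=-Pi(Phi)} and \ref{lemma-cK-et-D1(pi)}, which suffice for the \emph{finiteness/type-fini} equivalence (lemme~\ref{lemma-typefini=finie-W}), but not for th\'eor\`eme~\ref{theorem-diag-morphisme}. That theorem's $G$-\'equivariance check hinges on lemme~\ref{lemme-deschoices}, which in turn rests on lemme~\ref{lemma-intersection} applied with $Q=\pi$ and $M=I^-(\sigma,\pi)$; this application requires that $I^-(\sigma,\pi)$ engendre $\pi$ comme $K$-repr\'esentation, i.e.\ la proposition~\ref{lemma-R_n<R_n+2}(ii). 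The proof of that proposition uses the factorization $\cInd_{KZ}^G\sigma\twoheadrightarrow\cInd_{KZ}^G\sigma/P(T)\twoheadrightarrow\pi$, which relies on $\cH(KZ,\sigma)\cong\bFp[T]$, a fact specific to $\sigma$ \emph{irr\'eductible}. Nothing in your sketch supplies a replacement for $W_0$ r\'eductible, and without it you cannot form the niveau $\ell_{W_0}$ (finiteness of the level, cf.\ remarque~\ref{remark-sur-def-niveau}, is what gives the recursive definition of $\varphi$ a foothold), nor prove lemme~\ref{lemme-deschoices} for $W_0$, nor therefore establish $H_0(D(W_0,\pi))\cong\pi$.

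The remaining ``manipulation formelle de diagrammes'' (comparing the two surjections through $H_0((W,W\cap\Pi(W),\mathrm{can}))$) is sound once the $H_0$ isomorphism is granted, and your observation that $D_1(W_0,\pi)$ is $N$-stable, making $(W,D_1(W_0,\pi),\mathrm{can})$ a genuine sub-diagram of $(W,W\cap\Pi(W),\mathrm{can})$, is correct. But the core step is not formal. To repair the argument you would either have to prove the $W_0$-version of proposition~\ref{lemma-R_n<R_n+2}(ii) (which is not attempted anywhere in the paper and is not obviously true), or reduce to the irreducible case --- which is precisely the paper's d\'evissage, in which case your approach collapses onto the paper's.
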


\begin{proof}
L'implication (ii)$\Rightarrow$(i) \'etant triviale, il reste \`a d\'emontrer que (i)$\Rightarrow$(ii). Soit $\cInd_{KZ}^GW\twoheadrightarrow\pi$ une pr\'esentation finie avec $W\in\mathcal{W}(\pi)$. Une r\'ecurrence imm\'ediate sur la longueur de $W$, en utilisant le diagramme (\ref{equation-diag-presfini}), montre que $\pi$ est une extension successive d'une famille finie de $G$-repr\'esentations $\pi_i$ avec chaque $\pi_i$ un quotient de $\cInd_{KZ}^G\sigma_i$ et $\sigma_i$ parmi les facteurs de Jordan-H\"{o}lder de $W$. D'une part, comme $\pi$ est de pr\'esentation finie, le lemme \ref{lemma-typefini=finie-W} implique que tous les $\pi_i$ le sont aussi (en fait, ces deux conditions sont \'equivalentes). D'autre part, d'apr\`es \cite[proposition III.1.16]{Co}, $\pi$ est de pr\'esentation standard si et seulement si $\pi_i$ le sont. Ici, on remarque que, bien que cette proposition est \'enonc\'ee pour les $\pi$ de longueur finie, sa preuve reste valable pour $\pi$ de type fini.  On est donc ramen\'e au cas o\`u $\pi$ est un quotient de $\cInd_{KZ}^G\sigma$ pour $\sigma$ un poids. Si $\pi=\cInd_{KZ}^G\sigma$, (ii) est clair. Si $\pi$ est un quotient non trivial de $\cInd_{KZ}^G\sigma$, le th\'eor\`eme \ref{theorem-Pe} implique que $D_1(\sigma,\pi)$ est de dimension finie, et donc $D_0(\sigma,\pi)$ l'est aussi. En utilisant l'isomorphisme
(\ref{equation-H0(D(sigma))}): $H_0(D(\sigma,\pi))\cong \pi$, le lemme \ref{lemma-benjamin}(ii) montre que $\cInd_{KZ}^GD_0(\sigma,\pi)\twoheadrightarrow\pi$ fournit une pr\'esentation standard de $\pi$. Cela termine la d\'emontration.
\end{proof}

Comme cons\'equence, on obtient le r\'esultat suivant d\^u \`a Colmez \cite{Co}:
\begin{cor}
Toute repr\'esentation irr\'eductible non supersinguli\`ere de $G$ admet une pr\'esentation standard. Toute repr\'esentation de type fini de $\GL_2(\Q_p)$ admet une pr\'esentation standard.
\end{cor}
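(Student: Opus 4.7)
Les deux \'enonc\'es seront d\'eduits du corollaire \ref{coro-presfinie-equv}: il suffira, dans chaque cas, d'\'etablir que $\pi$ admet une pr\'esentation finie. Pour cela, on utilisera l'\'equivalence fournie par le th\'eor\`eme \ref{theorem-Pe} (ou son analogue plus g\'en\'eral, le lemme \ref{lemma-typefini=finie-W}): il suffira de v\'erifier que $D_1(\sigma,\pi)$ (ou $I^+(W,\pi)\cap I^-(W,\pi)$) est de dimension finie sur $\bFp$.

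\textbf{Premier \'enonc\'e.} Soit $\pi$ irr\'eductible non supersinguli\`ere. Je choisirai une sous-$KZ$-repr\'esentation irr\'eductible $\sigma\subset\pi$ (existence garantie par \cite[th\'eor\`eme 33]{BL2}), ce qui fournit par r\'eciprocit\'e de Frobenius une surjection $\cInd_{KZ}^G\sigma\twoheadrightarrow\pi$. Le th\'eor\`eme \ref{theorem-non-super} dit imm\'ediatement que $D_1(\pi)=\pi^{I_1}$ est de dimension au plus $2$, donc finie. Le th\'eor\`eme \ref{theorem-Pe} entra\^inera alors que $R(\sigma,\pi)$ est de type fini sur $\bFp[G]$, i.e. que $\pi$ admet une pr\'esentation finie, et le corollaire \ref{coro-presfinie-equv} permettra de conclure.

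\textbf{Second \'enonc\'e.} Soit $\pi$ une repr\'esentation de type fini de $\GL_2(\Q_p)$. Je fixerai $W\in\mathcal{W}(\pi)$ et raisonnerai par r\'ecurrence sur la longueur de $W$ comme $KZ$-repr\'esentation, le but \'etant de d\'emontrer que $R(W,\pi)$ est de type fini. Dans le cas de base o\`u $W=\sigma$ est un poids (ou bien $R(\sigma,\pi)=0$), le corollaire \ref{coro-Ollivier}---point d\'elicat et \emph{sp\'ecifique} \`a $F=\Q_p$---donnera $D_1(\sigma,\pi)=\pi^{I_1}$ de dimension finie, d'o\`u la conclusion par le th\'eor\`eme \ref{theorem-Pe}. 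Pour l'\'etape de r\'ecurrence, je choisirai une sous-$KZ$-repr\'esentation irr\'eductible $\sigma\subset W$ et je poserai $\pi_1:=\im(\cInd_{KZ}^G\sigma\to\pi)$ et $\pi_2:=\pi/\pi_1$, en exploitant le diagramme exact (\ref{equation-diag-presfini}) d\'ej\`a utilis\'e dans la preuve du corollaire \ref{coro-presfinie-equv}: la finitude de $R(\sigma,\pi_1)$ (par le cas de base appliqu\'e \`a $\pi_1$, qui est de type fini) et celle de $R(W/\sigma,\pi_2)$ (par l'hypoth\`ese de r\'ecurrence, $\pi_2$ restant de type fini) entra\^ineront celle de $R(W,\pi)$ via la colonne de gauche. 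On conclut \`a nouveau par le corollaire \ref{coro-presfinie-equv}.

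\textbf{Principal obstacle.} Le travail substantiel a d\'ej\`a \'et\'e effectu\'e en amont; la preuve est essentiellement une combinaison formelle des r\'esultats pr\'ec\'edents. Le seul point v\'eritablement $\Q_p$-sp\'ecifique est le corollaire \ref{coro-Ollivier}, qui repose sur l'admissibilit\'e automatique des quotients de $\cInd_{KZ}^G\sigma/P(T)$ (via \cite[th\'eor\`eme 2]{Vi}) et sur leur longueur finie, lesquelles sont toutes deux en d\'efaut d\`es que $F\neq\Q_p$.
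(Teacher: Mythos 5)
Votre preuve est correcte et suit pour l'essentiel la m\^eme route que celle du papier: se ramener \`a la pr\'esentation finie via le corollaire \ref{coro-presfinie-equv}, et pour la seconde classe, se r\'eduire aux quotients non triviaux de $\cInd_{KZ}^G\sigma$ pour lesquels le corollaire \ref{coro-Ollivier} et le th\'eor\`eme \ref{theorem-Pe} concluent. La seule diff\'erence r\'eelle concerne le premier \'enonc\'e: l\`a o\`u le papier renvoie \`a \cite{BL2} pour la pr\'esentation finie des non-supersinguli\`eres (o\`u elle se lit sur la structure explicite des noyaux), vous la red\'erivez en interne par le th\'eor\`eme \ref{theorem-non-super} (qui donne $\dim D_1(\pi)\leq 2$) puis le th\'eor\`eme \ref{theorem-Pe}; c'est l\'eg\`erement plus long mais plus auto-contenu, et cela a l'avantage de ne pas faire d\'ependre le corollaire d'une lecture fine de \cite{BL2}. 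Pour le second \'enonc\'e vous explicitez la r\'ecurrence que le papier ne fait qu'\'evoquer par renvoi \`a la preuve du corollaire \ref{coro-presfinie-equv}; c'est la m\^eme r\'eduction, utilisant le diagramme (\ref{equation-diag-presfini}), et votre remarque signalant que $R(\sigma,\pi_1)=0$ doit \^etre trait\'e \`a part quand $\pi_1$ est le quotient trivial est juste.
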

\begin{proof}
D'apr\`es le corollaire \ref{coro-presfinie-equv}, on est ramen\'e \`a montrer que les deux classes de $G$-repr\'esentations dans l'\'enonc\'e admettent une pr\'esentation finie, ce qui est bien connu pour la premi\`ere (voir \cite{BL2}). Quant \`a la deuxi\`eme classe, comme dans la d\'emonstration du corollaire \ref{coro-presfinie-equv}, il suffit de consid\'erer les quotients non triviaux de $\cInd_{\GL_2({\Z_p})\Z_p^{\times}}^{\GL_2(\Q_p)}\sigma$ avec $\sigma$ irr\'eductible, auquel cas l'\'enonc\'e r\'esulte du corollaire \ref{coro-Ollivier} et du th\'eor\`eme \ref{theorem-Pe}.
\end{proof}

Ce corollaire a aussi \'et\'e d\'emontr\'e par Breuil-Pa\v{s}k\={u}nas \cite{BP}, Vign\'eras \cite{Vi06-2} et Ollivier \cite{Ol}.

\subsection{L'espace des $I_1\cap U^+$-invariants de $I^+(\sigma,\pi)$}\label{subsection-U+-invariants}
Dans ce paragraphe, on fixe $\sigma$ un poids et $\pi$ un quotient non trivial de $\cInd_{KZ}^G\sigma$. Reppelons que $I_1\cap U^+=\smatr1{\cO}01$. Le but de ce paragraphe est de d\'emontrer le r\'esultat suivant:

\begin{prop}\label{prop-I^+-in-D_1-deuxcas}
Supposons $\pi$ irr\'eductible ou admissible. On a
\[I^+(\sigma,\pi)^{I_1\cap U^+}\subseteq D_1(\sigma,\pi).\]
\end{prop}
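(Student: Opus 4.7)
The plan is to argue by contradiction. Suppose there exists $v \in I^+(\sigma,\pi)^{I_1\cap U^+}$ with $\ell_\sigma(v) = n \geq 1$. By Lemma \ref{lemma-H-invariant}(i), every $S^k v$ lies in $\pi^{I_1\cap U^+}$, and Lemma \ref{lemma-niveau}(ii) applied iteratively gives $\ell_\sigma(S^k v) = n + k$ for every $k \geq 0$. In particular, the family $\{S^k v\}_{k\geq 0}$ is $\bFp$-linearly independent in $\pi$.

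The whole proof reduces to exhibiting a finite linear relation of the form
\[c_0 v + c_1 Sv + \cdots + c_m S^m v = 0\]
with $c_m \neq 0$. Indeed, once such a relation is available, Lemma \ref{lemma-c_m=0} applies: since $v \in I^+(\sigma,\pi)$ we may take the decomposition $v = v^+ + v^-$ with $v^+ = v$ and $v^- = 0$, so the lemma forces $\ell_\sigma(v) = \ell_\sigma(v^-) = 0$, contradicting $\ell_\sigma(v) = n \geq 1$.

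To establish the relation, I treat the two hypotheses separately. In the admissible case, $v$ is smooth hence fixed by some $K_m$; a direct conjugation computation showing $\smatr{\varpi^k}{a}{0}{1}^{-1}K_{m+k}\smatr{\varpi^k}{a}{0}{1}\subseteq K_m$ implies that each $S^k v$ is fixed by $K_{m+k}$, and combined with Lemma \ref{lemma-H-invariant}(i) it lies in $\pi^{J_k}$ for the open pro-$p$ subgroup $J_k := \langle K_{m+k}, I_1\cap U^+\rangle \subseteq I_1$. I then decompose $v$ into $\cH$-eigenvectors (using that $\cH$ normalizes $I_1\cap U^+$ and has order prime to $p$), and use the $\smatr{1+\p}{0}{0}{1+\p}$-action on $\pi^{I_1\cap U^+}$ supplied by Lemma \ref{lemma-H-invariant}(ii) to produce a nonzero vector of the same (or lower) level fixed by both the diagonal part of $I_1$ and $I_1\cap U^+$; invoking Corollary \ref{coro-adm-I1-in-D1} at this promoted vector yields the required relation. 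For the irreducible case, the non-supersingular case reduces to admissibility; in the supersingular case, I extend Lemma \ref{lemma-S^m=0-pas} (which rests on \cite[Lemmes 2.6, 2.7]{BP} and \cite[Corollaire 3.3]{Pa3}) to the $(I_1\cap U^+)$-eigenvector produced by the $\cH$-decomposition, obtaining $S^m v = 0$ for some $m\geq 1$, which gives the relation directly.

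The main obstacle is bridging the gap between the $(I_1\cap U^+)$-invariance of $v$ and the $I_1$-invariance required by Lemma \ref{lemma-S^m=0-pas} and Corollary \ref{coro-adm-I1-in-D1}. The crucial input is Proposition \ref{prop-1-dim}, which forces the $(I_1\cap U^+)$-invariants of each $R_n^+(\sigma)$ to coincide with the $I_1$-invariants; this rigidity, combined with the torus action on $\pi^{I_1\cap U^+}$, is what allows the reduction to an $I_1$-fixed vector without changing the level in a way that would break the contradiction.
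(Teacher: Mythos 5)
Your reduction is sound: if some $v\in I^+(\sigma,\pi)^{I_1\cap U^+}$ has $\ell_\sigma(v)=n\geq 1$ then $\ell_\sigma(S^kv)=n+k$ for all $k$, so producing a relation $\sum c_kS^kv=0$ with $c_m\neq 0$ gives an immediate contradiction (already via lemme \ref{lemma-niveau}(i); lemme \ref{lemma-c_m=0} is not strictly needed at this point). This is indeed the skeleton of the paper's proof in both cases. The problem is the way you propose to produce the relation, which does not work, and in particular misidentifies the role of proposition \ref{prop-1-dim}.

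Proposition \ref{prop-1-dim} computes $R^+_n(\sigma)^{I_1\cap U^+}$ \emph{inside} $\cInd_{KZ}^G\sigma$. There, the rigidity you invoke is real: the unique $(I_1\cap U^+)$-invariant line is automatically $I_1$-invariant. But $v$ lives in the quotient $\pi$, and passing to the quotient destroys this rigidity completely: $I^+(\sigma,\pi)^{I_1\cap U^+}$ is in general much larger than $\pi^{I_1}$ (when $F$ has characteristic $p$ and $\pi$ is supersingular it is all of $I^+(\pi)$, cf. theorem \ref{theorem-main}; when $F\neq\Q_p$ even $\pi^{I_1}$ can already be infinite-dimensional). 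So there is no "promotion" of a $(I_1\cap U^+)$-vector to an $I_1$-vector of the same level, and even if there were, corollary \ref{coro-adm-I1-in-D1} applied to a \emph{different} vector $\tilde v$ yields $\tilde v\in D_1$, which says nothing about a linear dependence among the $S^kv$. Likewise in the supersingular case: "extending lemme \ref{lemma-S^m=0-pas} to $(I_1\cap U^+)$-eigenvectors" is not a routine extension -- it is exactly the content of proposition \ref{prop-generalised}(i), which is the hardest step of the paper's argument and requires a genuine new idea (an induction on $\dim_{\bFp}\langle I_1\cdot v\rangle$ driven by lemme \ref{lemma-Alperin}, together with formula (\ref{equation-Sv-U-}) to push invariance under $U^-$ up the tower). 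You do not indicate how to carry out either of these extensions.

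Your $K_{m+k}$ conjugation computation $g^{-1}K_{m+k}g\subseteq K_m$ for $g=\smatr{\varpi^k}{a}{0}{1}$ is correct, but it goes the wrong way: the groups $J_k=\langle K_{m+k},I_1\cap U^+\rangle$ shrink as $k$ grows, so the spaces $\pi^{J_k}$ grow and do not trap the $S^kv$ in any fixed finite-dimensional subspace. The paper's admissible-case argument works in the opposite direction: it shows that $S$ \emph{gains} lower-unipotent invariance, i.e.\ if $v$ is fixed by $I_1\cap U^+$ and by $I_{n-1}\cap U^-$ then $Sv$ (and then by a closure argument all $S^mv$, $m\geq 1$) is fixed by $I_1\cap U^+$, $I_n\cap U^-$ and $\smatr{1+\p^n}{0}{0}{1}$, hence lies in the single space $V_n^+$, which is finite-dimensional by admissibility because it consists of $I_{n+1}$-invariants up to the center. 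The relation is then forced. If you want to repair your proof along the lines you sketched, the right inequality to seek is one in which $S^kv$ acquires invariance under a larger subgroup in the lower triangular direction, not the ever-smaller $K_{m+k}$.
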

On va d\'emontrer cette proposition aux \S\ref{subsubsection-irred} et \S\ref{subsubsection-adm} selon deux cas. Donnons une cons\'equence imm\'ediate:

\begin{cor}\label{coro-finitude-->adm}
Supposons $\pi$ irr\'eductible ou admissible. Si $D_1(\sigma,\pi)$ est de dimension finie, alors
$I^+(\sigma,\pi)^{I_1\cap U^+}$ l'est aussi.
\end{cor}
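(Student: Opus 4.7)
La d\'emonstration est essentiellement imm\'ediate une fois que la proposition \ref{prop-I^+-in-D_1-deuxcas} est \'etablie. Mon plan est simplement d'invoquer cette proposition: sous l'hypoth\`ese que $\pi$ est irr\'eductible ou admissible, on dispose de l'inclusion
\[I^+(\sigma,\pi)^{I_1\cap U^+}\subseteq D_1(\sigma,\pi).\]
Tout sous-espace vectoriel d'un $\bFp$-espace vectoriel de dimension finie est lui-m\^eme de dimension finie, donc si $D_1(\sigma,\pi)$ est de dimension finie sur $\bFp$, il en est de m\^eme de $I^+(\sigma,\pi)^{I_1\cap U^+}$. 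C'est tout ce qu'il y a \`a dire pour le corollaire proprement dit.

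Le v\'eritable travail est contenu dans la proposition \ref{prop-I^+-in-D_1-deuxcas}, dont la preuve (qui est annonc\'ee plus loin) doit traiter s\'epar\'ement les deux cas \guillemotleft~$\pi$ irr\'eductible~\guillemotright\ et \guillemotleft~$\pi$ admissible~\guillemotright. Dans le cas admissible, on peut raisonner comme dans le corollaire \ref{coro-adm-I1-in-D1}: si $v\in I^+(\sigma,\pi)^{I_1\cap U^+}$, alors les vecteurs $S^nv$ sont aussi fix\'es par $I_1\cap U^+$ d'apr\`es le lemme \ref{lemma-H-invariant}(i), et l'admissibilit\'e de $\pi$ (appliqu\'ee au pro-$p$-sous-groupe $I_1\cap U^+$) fournit une relation de d\'ependance lin\'eaire $c_0v+\sum_{n\geq 1}c_nS^nv=0$ avec $c_m\neq 0$ pour un certain $m$, ce qui permet d'utiliser la conclusion du lemme \ref{lemma-c_m=0} pour conclure que $\ell_\sigma(v)=0$, c'est-\`a-dire $v\in D_1(\sigma,\pi)$. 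Dans le cas irr\'eductible, on peut s'inspirer de la d\'emonstration de la proposition \ref{prop-I1-invariant-in-D1}: pour $\pi$ non supersinguli\`ere $\pi$ est admissible et l'on est ramen\'e au cas pr\'ec\'edent, tandis que pour $\pi$ supersinguli\`ere il faut invoquer un analogue du lemme \ref{lemma-S^m=0-pas} (qui traitait les vecteurs $I_1$-invariants) pour les vecteurs seulement $I_1\cap U^+$-invariants afin d'obtenir l'annulation $S^mv=0$ pour $m\gg 1$.

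L'obstacle principal pour la proposition \ref{prop-I^+-in-D_1-deuxcas} est donc l'adaptation, dans le cas supersingulier, du lemme \ref{lemma-S^m=0-pas} aux seuls vecteurs fix\'es par $I_1\cap U^+$ (et non par $I_1$ tout entier). Mais comme le pr\'esent corollaire se d\'eduit imm\'ediatement de l'\'enonc\'e de la proposition, je n'ai rien d'autre \`a \'ecrire pour le corollaire que la ligne d'inclusion et l'argument \'el\'ementaire de dimension ci-dessus.
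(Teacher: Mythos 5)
Votre d\'emonstration du corollaire est exactement celle du texte : la proposition \ref{prop-I^+-in-D_1-deuxcas} donne l'inclusion $I^+(\sigma,\pi)^{I_1\cap U^+}\subseteq D_1(\sigma,\pi)$, et un sous-espace d'un espace de dimension finie est de dimension finie. Une petite mise en garde sur votre commentaire relatif au cas admissible de la proposition elle-m\^eme : l'admissibilit\'e de $\pi$ ne s'applique pas directement au groupe $I_1\cap U^+$, qui n'est pas un sous-groupe \emph{ouvert} de $G$ ; dans le texte, il faut d'abord montrer que les $S^n v$ tombent dans un espace $V_n^+$ fix\'e par $I_{n+1}$ tout entier (en utilisant le caract\`ere central et l'action de $I_n\cap U^-$), puis invoquer l'admissibilit\'e pour ce sous-groupe ouvert.
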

\begin{proof}
C'est une cons\'equence directe de la proposition \ref{prop-I^+-in-D_1-deuxcas}.
\end{proof}

\subsubsection{Le cas supersingulier}\label{subsubsection-irred}
%
On montre la proposition \ref{prop-I^+-in-D_1-deuxcas} pour $\pi$  irr\'eductible supersinguli\`ere. En fait, on va d\'emontrer un peu plus (notons que l'on peut \'ecrire $I^+(\pi)$ et $D_1(\pi)$ au lieu de $I^+(\sigma,\pi)$ et $D_1(\sigma,\pi)$ puisque $\pi$ est irr\'eductible):
\begin{prop}\label{prop-generalised}
Supposons $\pi$ irr\'eductible supersinguli\`ere.
\begin{enumerate}
\item[(i)] Si $v\in I^+(\pi)$ est un vecteur fix\'e par $I_1\cap U^+$, alors $S^mv=0$ pour $m\gg0$.

\item[(ii)] On a l'inclusion
$I^+(\pi)^{I_1\cap U^+}\subseteq D_1(\pi)$.
\end{enumerate}
\end{prop}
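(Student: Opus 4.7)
The plan is to derive (ii) from (i) via Lemma~\ref{lemma-c_m=0}, and then prove (i) by reducing to $\cH$-eigenvectors and adapting the proof of Lemma~\ref{lemma-S^m=0-pas}. The main difficulty lies in this last step.

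For the implication (i)$\Rightarrow$(ii): let $v\in I^+(\pi)^{I_1\cap U^+}$ be non-zero and, granting (i), let $m\geq 1$ be the minimal integer with $S^m v=0$. The relation $S^m v=0$ fits the hypothesis of Lemma~\ref{lemma-c_m=0} with $c_0=\cdots=c_{m-1}=0$ and $c_m=1$. Applying that lemma to the trivial decomposition $v^+=v\in I^+(\pi)$, $v^-=0\in I^-(\pi)$ yields $\ell_{\sigma}(v)=\ell_{\sigma}(0)=0$, hence $v\in I^{+,0}(\pi)=D_1(\pi)$.

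For (i) itself, I would first reduce to $\cH$-eigenvectors. Since $\cH\subset P^+\cap K$, it preserves $I^+(\pi)$, and since $\cH$ normalizes $I_1\cap U^+$, it preserves $I^+(\pi)^{I_1\cap U^+}$. As $|\cH|$ is prime to $p$, this space decomposes into $\cH$-isotypic components. A direct matrix computation shows that $\cH$ centralizes $S$ in $\bFp[G]$---conjugation of $\smatr{\varpi}{[\lambda]}{0}{1}$ by a Teichm\"uller diagonal matrix simply permutes the $q$ summands defining $S$---so $S$ stabilizes each $\cH$-eigenspace. It therefore suffices to prove (i) for an $\cH$-eigenvector $v$.

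For such a $v$, the strategy mimics Lemma~\ref{lemma-S^m=0-pas}: show that either $Sv=0$ (done), or the $K$-subrepresentation $\langle K\cdot Sv\rangle\subset\pi$ is irreducible, so that Pa\v{s}k\={u}nas' result \cite[corollaire~3.3]{Pa3} applies to give $S^m v=0$ for some $m$ in the supersingular case. The invariance data is preserved along the way: Lemma~\ref{lemma-H-invariant}(i) keeps $Sv\in\pi^{I_1\cap U^+}$, and $Sv$ remains an $\cH$-eigenvector by the centralization above.

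The hardest point will be establishing this irreducibility dichotomy. The cited results \cite[lemmes 2.6, 2.7]{BP}, which serve as the model, are stated for $\cH$-eigenvectors in $\pi^{I_1}$, not merely in $\pi^{I_1\cap U^+}$. Bridging this gap amounts either to upgrading the $I_1\cap U^+$-invariance of such a $v$ to full $I_1$-invariance (exploiting the supersingular hypothesis together with the particular structure of $I^+(\pi)$ inside $\pi$), or to redoing the $K$-module analysis of \cite{BP} directly under the weaker assumption. This is the technical crux of Proposition~\ref{prop-generalised}(i).
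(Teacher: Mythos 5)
Your reduction (i)\,$\Rightarrow$\,(ii) via Lemme~\ref{lemma-c_m=0} with $v^+=v$, $v^-=0$ is correct and is exactly what the paper does. The reduction of (i) to $\cH$-eigenvectors is also sound (though the paper does not use it at this stage), and you are right that $\cH$ normalizes $I_1\cap U^+$ and centralizes $S$.

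The problem is that you have correctly identified the ``technical crux'' of (i) but then stopped there without filling it. Your two proposed routes---either upgrade $I_1\cap U^+$-invariance to $I_1$-invariance using ``the supersingular hypothesis together with the particular structure of $I^+(\pi)$,'' or redo the $K$-module analysis of \cite[lemmes 2.6, 2.7]{BP} under weaker hypotheses---are both left as sketches, and the second one would be a substantial undertaking whose feasibility you do not establish. In particular, the $\cH$-eigenvector reduction does not buy you anything here: the obstruction is precisely the lack of full $I_1$-invariance, not of $\cH$-eigenvector structure, and \cite[corollaire~3.3]{Pa3} and \cite[lemmes 2.6, 2.7]{BP} genuinely need vectors in $\pi^{I_1}$.

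The paper closes this gap very differently: it argues by induction on $n(v):=\dim_{\bFp}\langle I_1\cdot v\rangle$. The base case $n(v)=1$ is exactly the $I_1$-invariant case, which is Lemme~\ref{lemma-S^m=0-pas}. For $n(v)\geq 2$, Lemme~\ref{lemma-Alperin}(i) applied with $H=I_1$ gives $n\bigl((h-1)v\bigr)<n(v)$ for every $h\in\smatr{1+\p}00{1+\p}$, and since this torus fixes $I^+(\pi)^{I_1\cap U^+}$ setwise (Lemme~\ref{lemma-H-invariant}(ii)), the inductive hypothesis and the commutation relation $(h-1)S^m v=S^m\bigl((h-1)v\bigr)$ force $S^m v$ to be $\smatr{1+\p}00{1+\p}$-invariant for $m\gg 0$. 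Then an explicit matrix identity (formula~(\ref{equation-Sv-U-})) shows that each further application of $S$ enlarges the lower unipotent group fixing the vector by one step; after $k$ more applications, $S^{m+k}v$ is fixed by all of $I_1$ (via Iwahori decomposition), returning you to the base case. This ``$S$ improves invariance'' mechanism, together with the induction on $n(v)$ and Alperin's lemma, is the idea that your proposal is missing.
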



Combin\'e avec la proposition \ref{prop-I1-invariant-in-D1}, on obtient de la proposition \ref{prop-generalised} les inclusions suivantes (pour $\pi$ supersinguli\`ere):
\[\pi^{I_1}\subseteq I^+(\pi)^{I_1\cap U^+}\subseteq\{v\in I^+(\pi),\ S^mv=0 \textrm{\ pour\ }m\gg0\}\subseteq D_1(\pi).\]

Commen\c{c}ons la d\'emonstration par un lemme.

\begin{lem}\label{lemma-Alperin}
Soient $H$ un pro-$p$-groupe et $M$ une repr\'esentation lisse de $H$ (sur $\bFp$). Soient $x\in M$ un vecteur non nul et $M_{x}=\langle H\cdot x\rangle$ la sous-repr\'esentation de $M$ engendr\'ee par $x$. 
\begin{enumerate}
\item[(i)] Pour tout $h\in H$, on a $(h-1)x\in \rad_H (M_x)$; en particulier,
\[\dim_{\bFp}M_{(h-1)x} <\dim_{\bFp}M_x.\]

\item[(ii)] Comme $\bFp$-espace vectoriel, $\rad_H (M_x)$ est engendré par les $(h-1)x$ pour $h\in H$.
\end{enumerate}
\end{lem}
\begin{proof}
(i) Comme $M$ est lisse, $x$ est fix\'e par un sous-groupe ouvert $H_1$ de $H$, on peut donc supposer que $H$ est un $p$-groupe fini en rempla\c{c}ant $H$ par $H/H_1$. D'apr\`es \cite[\S1.3, exer. 2]{Al}, $h-1$ appartient \`a $\rad(\bFp[H])$ le radical de $\bFp[H]$, et donc (\cite[\S1.1, prop. 4]{Al})
\[(h-1)x\in \rad(\bFp[H])\cdot M_x=\rad_H(M_{x}).\]
Le dernier \'enonc\'e vient du fait que $M_{(h-1)x}\subset\rad_H(M_x)\subsetneq M_x$.

(ii) Comme dans (i), on peut supposer que $H$ est un $p$-groupe fini, auquel cas l'\'enonc\'e est une cons\'equence de \cite[\S1.3, exer. 2]{Al}.
\end{proof}

\vspace{1mm}

\begin{proof}[D\'emonstration de la proposition \ref{prop-generalised}]
D'apr\`es le lemme \ref{lemma-c_m=0}, (ii) est une cons\'equence de (i) puisque $v\in I^+(\pi)$.
Soit $v\in I^+(\pi)^{I_1\cap U^+}$ un vecteur non nul. 
On va d\'emontrer (i) par r\'ecurrence sur la dimension de $\langle I_1\cdot v\rangle$ (sur $\bFp$) qui sera not\'ee $n(v)$.\vv

(a) \emph{Cas o\`u $n(v)=1$}, i.e.  $v$ est fix\'e par $I_1$. C'est juste le lemme \ref{lemma-S^m=0-pas}.
\vv

(b) \emph{Cas o\`u $n(v)\geq 2$}.
Par hypoth\`ese de r\'ecurrence, l'\'enonc\'e (i) est vrai pour tout $v'\in I^+(\pi)^{I_1\cap U^+}$ tel que $n(v')<n(v)$.
Le lemme \ref{lemma-Alperin}(i) appliqu\'e \`a $H=I_1$ montre que c'est le cas pour tout $(h-1)v$ avec $h\in \smatr{1+\p}00{1+\p}\subset I_1$,  donc il existe un entier $m_h\geq 1$ d\'ependant de $h$ tel que $S^{m_h}((h-1)v)=0$. Comme $\pi$ est lisse, la dimension de $\langle \smatr{1+\p}00{1+\p}\cdot v\rangle$ (sur $\bFp$) est finie, on trouve donc un entier $m\geq 1$ suffisamment grand tel que
\[S^m((h-1)v)=0\]
pour tout $h$ comme ci-dessus. 
En utilisant le lemme \ref{lemma-H-invariant}(ii) plusieurs fois, on obtient pour tout $h$:
\[(h-1)S^mv=S^m((h-1) v)=0,\]
c'est-\`a-dire, $S^mv$ est fix\'e par $\smatr{1+\p}00{1+\p}$.
Ainsi, on peut supposer  que $v$ est fix\'e par $\smatr{1+\p}00{1+\p}$ quitte \`a remplacer $v$ par $S^{m}v$.  

Comme $\pi$ est lisse, il existe un entier $k\geq 1$ tel que $v$ soit fix\'e par $\smatr10{\p^{k+1}}1$. D'apr\`es la formule suivante (o\`u $a\in \cO$):
\begin{equation}\label{equation-Sv-U-}\matr{1}{0}{\varpi^ka}{1}\summ_{\lambda\in\F_q}\matr{\varpi}{[\lambda]}01=\summ_{\lambda\in\F_q}\matr{\varpi}{\frac{[\lambda]}{1+\varpi^ka[\lambda]}}{0}{1}
\matr{\frac{1}{1+\varpi^ka[\lambda]}}{0}{\varpi^{k+1}a}{1+\varpi^ka[\lambda]}\end{equation}
on d\'eduit que $Sv$ est fix\'e par $\smatr10{\p^{k}}1$. De m\^eme, $S^kv$ est fix\'e par $\smatr10{\p}1$. Le lemme \ref{lemma-H-invariant} combin\'e avec la d\'ecomposition d'Iwahori (\ref{equation-Iwahori})
montre alors que $S^kv$ est fix\'e par $I_1$. On est donc ramen\'e  au cas (a) en rempla\c{c}ant $v$ par $S^kv$, et la proposition s'en d\'eduit.
\end{proof}

\subsubsection{Le cas admissible}\label{subsubsection-adm}

Comme toute repr\'esentation irr\'eductible non supersinguli\`ere de $G$ est admissible \cite{BL2}, la preuve suivante  compl\`ete la preuve de la proposition \ref{prop-I^+-in-D_1-deuxcas}.

\begin{proof}[D\'emonstration de \ref{prop-I^+-in-D_1-deuxcas} pour $\pi$ admissible]
(i) Posons $V^+=I^+(\sigma,\pi)^{I_1\cap U^+}$ et pour $n\geq 1$,
\[V^+_n:=\Bigl\{v\in V^+|\ v \textrm{\ est\ fix\'e  par\ } I_n\cap U^-=\matr10{\p^n}1\Bigr\}.\]
Comme $\pi$ admet un caract\`ere central, et comme $\smatr{1+\p^{n+1}}{0}{0}{1}$ (si $p\neq2$, on peut le remplacer
par $\smatr{1+\p^{n}}{0}{0}{1}$) est inclus dans le
groupe engendr\'e par $I_1\cap U^+$, $I_n\cap U^-$ et $I_1\cap Z$ dans $G$, on voit que
tout $x\in V^+_n$ est fix\'e aussi par
$\smatr{1+\p^{n+1}}{0}{0}{1}$. En cons\'equence, $ V^+_n$ est fix\'e par $I_{n+1}$ qui est un sous-groupe ouvert de $G$. On en d\'eduit que chaque $V^+_n$ est de
dimension finie puisque $\pi$ est admissible.


Comme $\pi$ est lisse, il existe un entier $n\geq 1$ tel que $v\in V^+_{n-1}$.
D'apr\`es la proposition \ref{prop-U+-invariant}(i), $Sv\in
V^+$; de plus, pour $\mu\in\F_q$, on a\vv
\begin{itemize}
\item[--] par ce qui pr\'ec\`ede, $v$ est fix\'e par $\smatr{1+\p^{n}}{0}{0}{1}$, et donc $Sv$ l'est aussi: \[\matr{1+\varpi^{n}[\mu]}{0}{0}{1}Sv=\summ_{\lambda\in\F_q}\matr{\varpi}{[\lambda]}01
    \matr{1+\varpi^n[\mu]}{\varpi^{n-1}[\mu\lambda]}01v=Sv;\]
\item[--] $Sv$ est fix\'e par $I_n\cap U^-$ par le calcul (\ref{equation-Sv-U-}).\vspace{1mm}
\end{itemize}
Autrement dit, $Sv$ appartient \`a  $V^+_n$ et est fix\'e par $\smatr{1+\p^n}001$. De m\^eme, $S^{m}v$ l'est aussi pour tout $m\geq 1$. La finitude de la dimension de $V_n^+$ implique qu'il existe une famille
$(c_m)_{m\geq 0}$ d'\'el\'ements de $\bFp$ tels que
\[c_0v+\summ_{m\geq
1}c_mS^{m}v=0,\]
et le lemme \ref{lemma-c_m=0} permet de conclure puisque $v\in I^+(\sigma,\pi)$.
\end{proof}

\section{Le cas o\`u $F$ est de caract\'eristique positive}
Dans ce chapitre, on suppose que $F$ est de caract\'eristique $p$. Cela fait que $F\cong\F_q((\varpi))$ et $[\lambda]=\lambda$ pour $\lambda\in\F_q$. On d\'etermine au \S\ref{subsection-D(pi)=K(pi)} le diagramme canonique associ\'e \`a une repr\'esentation supersinguli\`ere de $G$ et on en donne au \S\ref{subsection-char=p-conse} quelques cons\'equences.

\subsection{$D(\pi)=\cK(\pi)$}\label{subsection-D(pi)=K(pi)}
Le but de ce paragraphe est de d\'emontrer le th\'eor\`eme suivant dont
la d\'emonstration va demander un peu de pr\'eparation.
\begin{thm}\label{theorem-main}
Soit $\pi$ une repr\'esentation supersinguli\`ere de $G$.
 
(i) Pour tout $x\in I^+(\pi)$, on a $S^mx=0$ pour $m\gg0$.

(ii) On a \[D_1(\pi)=D_0(\pi)=I^+(\pi)=\pi.\]
Autrement dit, $D(\pi)=\cK(\pi):=(\pi|_{KZ},\pi|_{N},\mathrm{can})$.
\end{thm}
\vv

Soit $M$ une repr\'esentation lisse de $I$. Posons $W=\Ind_I^K\Pi(M)$ la $K$-repr\'esentation induite. Rappelons (\S\ref{subsection-Ind-I-K}, (\ref{equation-W=M+W+})) que $W^+$ d\'esigne le sous-espace vectoriel de $W$ engendré par les vecteurs
\[\biggl\{\matr{\varpi}{\lambda}01v,\ \ v\in M,\ \lambda\in\F_q\biggr\}.\]
Notons que $W^+$ est stable par $I$, \emph{a fortiori} par $I_1\cap U^+=\smatr1{\cO}01$. Dans le lemme suivant, on note pour plus d'aisance $Sx$ au lieu de $F_{0,x}$ (voir la proposition \ref{prop-U+-invariant}).
\begin{lem}\label{lemma-W+}
Soient $x\in M$ un vecteur et $\{v_i\}_{1\leq i\leq N}$ une $\bFp$-base de $\langle I_1\cap U^+\cdot x\rangle$. Alors $\{Sv_i\}_{1\leq i\leq N}$ forme une $\bFp$-base pour $\langle I_1\cap U^+\cdot Sx\rangle\subset W^+$.
\end{lem}
\begin{proof}
Remarquons que $M$ \'etant lisse, $\langle I_1\cap U^+\cdot x\rangle$ est bien de dimension finie.

Si $n\geq 0$ et si $a=\sum_{i=0}^na_i\varpi^i\in\cO$ avec $a_i\in\F_q\hookrightarrow \cO$, on pose \[\tilde{a}=\summ_{i=1}^na_i\varpi^{i-1}\in\cO\] de telle sorte que (o\`u $\lambda\in\F_q$):
\[\matr{1}a01\matr{\varpi}{\lambda}01=\matr{\varpi}{\lambda+a_0}01\matr1{\tilde{a}}01.\]
On en d\'eduit:
\[\begin{array}{rll}
\matr1{a}01Sx&=&\summ_{\lambda\in\F_q}\matr1{a}01\matr{\varpi}{\lambda}01x\\
&{=}&\summ_{\lambda\in\F_q}\matr{\varpi}{\lambda+a_0}01\matr{1}{\tilde{a}}01x\\
&{=}&\summ_{\lambda\in\F_q}\matr{\varpi}{\lambda}01\matr{1}{\tilde{a}}01x
\end{array}\]
et donc $\langle I_1\cap U^+\cdot Sx\rangle$ est contenu dans l'espace vectoriel engendré par $\{ Sv_i\}_{1\leq i\leq N}$. R\'eciproquement, fixons un indice $i\in\{1,\dots, N\}$ et soit \[Q=\summ_k \alpha_k\matr1{b_k}01\in\bFp[I_1\cap U^+], \ \ \alpha_k\in\bFp\] un \'el\'ement tel que $Q\cdot x=v_i$. Soit $b_k'\in\cO$ un \'el\'ement tel que $b_k=\tilde{b_k'}$ (c'est toujours possible). Le m\^eme calcul que ci-dessus montre que $Q'\cdot Sx=Sv_i$, o\`u $Q'\in\bFp[I_1\cap U^+]$ est d\'efini par
$Q'=\sum_{k}\alpha_k\smatr1{b_k'}01$.
Cela permet de conclure.
\end{proof}
\begin{cor}\label{coro-W+}
Pout tout vecteur $x\in\pi$, on a
\[\dim_{\bFp} \langle I_1\cap U^+\cdot Sx\rangle\leq \dim_{\bFp}\langle I_1\cap U^+\cdot x\rangle,\]
avec in\'egalit\'e stricte s'il existe un vecteur non nul $v\in \langle I_1\cap U^+\cdot x\rangle$ tel que  $Sv=0$.
\end{cor}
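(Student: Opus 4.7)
Le plan est d'appliquer directement le lemme \ref{lemma-W+} via une r\'eciprocit\'e de Frobenius. On prend $M=\pi|_I$ (la restriction \`a $I$) et on forme $W=\Ind_I^K\Pi(M)$. L'identit\'e sur $\Pi(M)$ fournit par r\'eciprocit\'e de Frobenius un morphisme $K$-\'equivariant
\[\Psi:W\lra \pi,\quad [k,\Pi(v)]\longmapsto k\Pi(v).\]
Un calcul direct utilisant l'identit\'e $\smatr{[\lambda]}110\Pi=\smatr{\varpi}{[\lambda]}01$ montre que, pour tout $v\in M$,
\[\Psi(F_{0,v})=\summ_{\lambda\in\F_q}\matr{[\lambda]}110\Pi(v)=\summ_{\lambda\in\F_q}\matr{\varpi}{[\lambda]}01v=Sv.\]
En particulier, $\Psi$ envoie $F_{0,x}\in W^+$ sur $Sx\in \pi$.

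Soit $\{v_i\}_{1\leq i\leq N}$ une base de $\langle I_1\cap U^+\cdot x\rangle\subseteq\pi$, finie par la lissit\'e de $\pi$. Le lemme \ref{lemma-W+} assure alors que $\{F_{0,v_i}\}_{1\leq i\leq N}$ forme une base de $\langle I_1\cap U^+\cdot F_{0,x}\rangle\subseteq W^+$. Comme $\Psi$ est en particulier $I_1\cap U^+$-\'equivariant, l'image par $\Psi$ de cet espace est exactement $\langle I_1\cap U^+\cdot Sx\rangle\subseteq\pi$, qui est donc engendr\'e sur $\bFp$ par $\{Sv_i\}_{1\leq i\leq N}$. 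On obtient ainsi l'in\'egalit\'e souhait\'ee.

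Pour la seconde affirmation, supposons qu'il existe un vecteur non nul $v=\summ c_iv_i\in \langle I_1\cap U^+\cdot x\rangle$ v\'erifiant $Sv=0$ dans $\pi$, avec les $c_i\in\bFp$ non tous nuls. Alors $\summ c_iSv_i=Sv=0$, ce qui force la famille g\'en\'eratrice $\{Sv_i\}$ \`a \^etre li\'ee dans $\pi$ et l'in\'egalit\'e \`a \^etre stricte. Je n'anticipe aucun obstacle s\'erieux dans cette preuve: tout le contenu technique est enti\`erement port\'e par le lemme \ref{lemma-W+}, et le corollaire r\'esulte simplement d'un transport de structure \`a travers $\Psi$.
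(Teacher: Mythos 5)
Your proof is correct and follows exactly the path the paper treats as self-evident: the paper's entire proof of the corollary is the single line «Cons\'equence triviale du lemme \ref{lemma-W+}», and what you have done is spell out the implicit transport — building the $K$-equivariant map $\Psi:\Ind_I^K\Pi(\pi|_I)\to\pi$ by Frobenius reciprocity, noting $\Psi(F_{0,v})=Sv$, and pushing the basis of lemme \ref{lemma-W+} forward through the $I_1\cap U^+$-equivariant $\Psi$ to get a generating family, hence the inequality, with linear dependence (and strictness) when some nonzero $v$ has $Sv=0$. This is precisely the intended argument, just made explicit.
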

\begin{proof}
Cons\'equence triviale du lemme \ref{lemma-W+}.
\end{proof}
\vspace{1mm}

\begin{proof}[D\'emonstration du th\'eor\`eme \ref{theorem-main}]
(i) Soit $x\in I^+(\pi)$ un vecteur non nul.
On va d\'emontrer (i) par r\'ecurrence sur la dimension de $M_x:=\langle I_1\cap U^+\cdot x\rangle$ (sur $\bFp$) qui sera not\'ee $m(x)$. Notons que $m(x)<+\infty$ puisque $\pi$ est lisse.

Comme $I_1\cap U^+$ est un pro-$p$-groupe, l'espace $M_x^{I_1\cap U^+}$ est non nul. Soit $v\in M_x$ un vecteur non nul fix\'e par $I_1\cap U^+$. Alors la proposition \ref{prop-generalised}(i) implique $S^nv=0$ pour $n\gg0$, et donc (par le corollaire \ref{coro-W+})
\[m(S^nx)< m(x).\]
Par hypoth\`ese de r\'ecurrence, on voit que $S^{n+m}x=S^m(S^nx)=0$ pour un entier $m\geq 1$ suffisamment grand, d'o\`u l'\'enonc\'e pour $x$.

(ii) D'apr\`es (i) et le lemme \ref{lemma-c_m=0}, on a $I^+(\pi)\subseteq D_1(\pi)$ puis $\pi=D_1(\pi)$, car $\Pi\cdot D_1(\pi)=D_1(\pi)$ et $\pi=I^+(\pi)+\Pi\cdot I^+(\pi)$. Les autres \'enonc\'es sont imm\'ediats.
\end{proof}
\begin{rem}
Supposons ici que $F$ soit une extension finie de $\Q_p$, avec $e\geq 2$ l'indice de ramification. Soit $\pi$ une repr\'esentation supersinguli\`ere de $G$. Alors pour tout $x\in \pi^{I_{e-1}}$, on a $S^mx=0$ pour $m\gg0$.
 En particulier, $\pi^{I_{e-1}}\subseteq D_1(\pi)$.
\end{rem}
\begin{proof}
D'apr\`es la d\'emonstration du th\'eor\`eme \ref{theorem-main}, on voit qu'il suffit de montrer l'analogue du corollaire \ref{coro-W+} sous l'hypoth\`ese suppl\'ementaire que $x\in \pi$ est fix\'e par $I_{e-1}$. Or, c'est une cons\'equence du fait suivant (par un calcul analogue \`a celui du lemme \ref{lemma-W+}):
\emph{si $a=\sum_{i=0}^n[a_i]\varpi^i\in\cO$ avec $a_i\in\F_q$ et si $\lambda\in\F_q$, on a (voir par exemple \cite{Se3})
\[a+[\lambda]=[a_0+\lambda]+\summ_{i=1}^n[a_i]\varpi^i+X(a_0,\lambda)\varpi^{e}\]
avec $X(a_0,\lambda)\in\cO$ un \'el\'ement d\'ependant de $a_0$ et de $\lambda$.}
\end{proof}
\vv
\begin{cor}\label{coro-non-presfinie}
Soit $\pi$ une repr\'esentation supersinguli\`ere de $G$. Alors $\pi$ n'est pas de pr\'esentation finie.
\end{cor}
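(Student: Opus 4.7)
Le plan est de raisonner par l'absurde en combinant les th\'eor\`emes \ref{theorem-Pe} et \ref{theorem-main}. Fixons d'abord une sous-$KZ$-repr\'esentation irr\'eductible $\sigma$ de $\pi$; c'est un poids, donc un $\bFp$-espace vectoriel de dimension finie. Puisque $\pi$ est irr\'eductible, $\sigma$ l'engendre comme $G$-repr\'esentation, de sorte que $\sigma\in\mathcal{W}(\pi)$.

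Supposons que $\pi$ admette une pr\'esentation finie. D'apr\`es la proposition \ref{lemma-equi-presfinie-I}, la pr\'esentation $\cInd_{KZ}^G\sigma\twoheadrightarrow \pi$ est alors elle-m\^eme finie, c'est-\`a-dire que $R(\sigma,\pi)$ est un $\bFp[G]$-module de type fini. Le th\'eor\`eme \ref{theorem-Pe} entra\^ine donc que $D_1(\sigma,\pi)$ est un $\bFp$-espace vectoriel de dimension finie, et le corollaire \ref{corollary-D1-deuxpoids} (applicable par irr\'eductibilit\'e de $\pi$) permet d'identifier cet espace avec $D_1(\pi)$, ind\'ependamment du choix de $\sigma$.

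L'hypoth\`ese que $F$ est de caract\'eristique $p$ et que $\pi$ est supersinguli\`ere permet alors d'invoquer le th\'eor\`eme \ref{theorem-main}(ii), qui fournit l'\'egalit\'e $D_1(\pi)=\pi$. Il en r\'esulterait que $\pi$ est un $\bFp$-espace vectoriel de dimension finie, ce qui est absurde. En effet, le seul point qui m\'eriterait un commentaire dans la r\'edaction, et que je consid\`ere comme l'obstacle principal (quoique mineur), est cette derni\`ere assertion: toute repr\'esentation supersinguli\`ere de $G$ est de dimension infinie sur $\bFp$, sans quoi elle serait un caract\`ere lisse, contredisant sa position dans la classification disjointe en quatre cat\'egories de Barthel-Livn\'e. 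C'est un fait standard, qu'on peut justifier par exemple en observant que le noyau d'une repr\'esentation irr\'eductible lisse de dimension finie contient un sous-groupe ouvert dont la cl\^oture normale dans $G$ contient $\SL_2(F)$, de sorte que $\pi$ se factorise par l'ab\'elianis\'e $G/Z\SL_2(F)$, donc $\pi$ est un caract\`ere.
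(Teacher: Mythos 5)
Your proof is correct and follows exactly the same route as the paper's: the paper also deduces the corollary by combining Proposition \ref{lemma-equi-presfinie-I}, Th\'eor\`eme \ref{theorem-Pe} and Th\'eor\`eme \ref{theorem-main}(ii) with the remark that a supersingular representation is of infinite dimension over $\bFp$. You have merely spelled out the intermediate steps (in particular the appeal to the corollaire \ref{corollary-D1-deuxpoids} to pass from $D_1(\sigma,\pi)$ to $D_1(\pi)$, and the standard justification of $\dim_{\bFp}\pi=\infty$) that the paper leaves implicit.
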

\begin{proof}
C'est une cons\'equence directe des th\'eor\`emes \ref{theorem-Pe} et \ref{theorem-main}(ii) et de la proposition \ref{lemma-equi-presfinie-I} en remarquant qu'une repr\'esentation supersinguli\`ere est toujours de dimension infinie sur $\bFp$.
\end{proof}

Par les r\'esultats de Barthel-Livn\'e \cite{BL2} et Breuil \cite{Br1}, les repr\'esentations non supersinguli\`eres de $G$ et les repr\'esentations supersinguli\`eres de $\GL_2(\Q_p)$ sont toutes de pr\'esentation finie. Le corollaire \ref{coro-non-presfinie} donne donc la premi\`ere classe de repr\'esentations lisses irr\'eductibles de $G$ qui ne sont pas de pr\'esentation finie.

\subsection{Cons\'equences}\label{subsection-char=p-conse}

On donne deux applications des r\'esultats au \S\ref{subsection-D(pi)=K(pi)}. Au \S\ref{subsubsection-application-1}, on g\'en\'eralise des r\'esultats de \cite{Pa3} sous l'hypoth\`ese suppl\'ementaire que $F$ est de caract\'eristique $p$. Au  \S\ref{subsubsection-coro-cas(ii)}, on compl\`ete la d\'emonstration du corollaire \ref{corollaire-deuxcas=}.

\subsubsection{Restriction \`a $P^+$}\label{subsubsection-application-1}

 Rappelons que $P$ d\'esigne le sous-groupe de Borel de $G$ et $P^+$ le sous-mono\"{i}de $\smatr{\cO-\{0\}}{\cO}01$ de $G$.  Commen\c{c}ons par rappeler un r\'esultat de \cite{Pa3}:
\begin{lem}\label{lemma-P+-irred}
Soient $\pi$ une repr\'esentation lisse de $G$ (avec caract\`ere central) et $x\in \pi$ un vecteur non nul. Alors il existe un vecteur non nul $v\in\langle P^+\cdot x\rangle\cap \pi^{I_1}$ tel que $\langle K\cdot v\rangle$ soit une $K$-repr\'esentation irr\'eductible.
\end{lem}
\begin{proof}
Faire la m\^eme preuve que celle de  \cite[proposition 4.2]{Pa3}, en remarquant que $I_1\cap P=I_1\cap P^+Z$
et que $\pi$ admet un caract\`ere central.
\end{proof}

\begin{thm}\label{theorem-irr}
Soit $\pi$ une repr\'esentation irr\'eductible supersinguli\`ere de $G$. Alors $\pi|_{P^+}$ est encore irr\'eductible.
\end{thm}
On dit qu'une repr\'esentation $V$ d'un mono\"{i}de $H$ est \emph{irr\'eductible} si $\langle H\cdot x\rangle=V$ pour tout vecteur $x\in V$ non nul.

\begin{proof}
Comparer avec \cite[th\'eor\`eme 4.3]{Pa3}. Soit $x\in \pi$ un vecteur non nul. Il faut d\'emontrer:
  \[\pi=\langle P^+\cdot x\rangle.\]
En utilisant le lemme \ref{lemma-P+-irred}, on trouve un vecteur non nul
$v\in \langle P^+\cdot x\rangle\cap \pi^{I_1}$. De plus, le lemme \ref{lemma-S^m=0-pas} permet de supposer  que $Sv=0$, ce qui fait que $\smatr0110 v\in \langle P^+\cdot v\rangle$  d'apr\`es \cite[lemme 3.4]{Pa3}.  Comme $\smatr{[\lambda]}110=\smatr{1}{[\lambda]}01\smatr0110$,  la d\'ecomposition (\ref{equation-decom-K/I}) implique que $\langle K\cdot v\rangle\subset\langle P^+\cdot v\rangle$.   Prenons $\sigma$ une sous-$K$-repr\'esentation irr\'eductible de $\langle K\cdot v\rangle$. Alors par le th\'eor\`eme \ref{theorem-main}(ii),  on a 
\[\pi=I^+(\pi)=\langle P^+\cdot \sigma\rangle\subseteq\langle P^+\cdot v\rangle\subseteq\langle P^+\cdot x\rangle,\]
d'o\`u le r\'esultat.\end{proof}

\vv


Le reste de ce paragraphe est consacr\'e \`a montrer le th\'eor\`eme suivant (comparer avec \cite[th\'eor\`eme 4.4]{Pa3}).
\begin{thm}\label{theorem-morph}
Soient $\pi$, $\pi'$ deux repr\'esentations lisses de $G$ avec $\pi$ irr\'eductible supersinguli\`ere. Alors
 \[\Hom_{P^+Z}(\pi,\pi')\cong\Hom_G(\pi,\pi').\]
\end{thm}
Rappelons d'abord un r\'esultat de \cite{Pa3}:

\begin{lem}\label{lemma-Pa->pi'}
Conservons les notations du th\'eor\`eme \ref{theorem-morph}. Si $\phi:\pi\ra\pi'$ est un $P^+Z$-morphisme non nul, alors il existe un vecteur non nul $v\in\pi^{I_1}$ tel que $\phi(v)\in\pi'^{I_1}$.
\end{lem}
\begin{proof}
Voir la d\'emonstration du \cite[th\'eor\`eme 4.4]{Pa3}.
\end{proof}

\textbf{Notation}: On pose $R\in \bFp[P^+I]$ l'\'el\'ement d\'efini par:
\begin{equation}
\nonumber
R:=\summ_{\lambda\in\F_q^{\times}}
\matr{\varpi}{\lambda^{-1}}01\matr{-\lambda^{-1}}0{\varpi}{\lambda}.\end{equation}
de telle sorte que l'on ait l'\'egalit\'e suivante dans $\bFp[G]$ (par un calcul direct):
\begin{equation}\label{equation-R}s\cdot S=\Pi+R\end{equation}
en rappelant que $s=\smatr0110$ et $\Pi=\smatr01{\varpi}0$.

\vv

\begin{proof}[D\'emonstration du th\'eor\`eme \ref{theorem-morph}]
Soit $\phi:\pi\ra \pi'$ un morphisme $P^+$-\'equivariant non nul. On va d\'emontrer que $\phi$ est $G$-\'equivariant (ce qui suffit pour conclure). En r\'e\'ecrivant les d\'ecompositions (\ref{equation-vigneras})  et (\ref{equation-decom-K/I}) sous la forme:
\[G=P^+ZK\bigcup sP^+ZK\]
et
\[K=I\coprod\biggl(\coprod_{\lambda\in\F_q}\matr{1}{\lambda}01sI\biggr),\]
on est ramen\'e \`a v\'erifier que:
\begin{itemize}
\item[(a)] $\phi(g\cdot x)=g\cdot \phi(x)$ pour tout $g\in I$ et tout $x\in\pi$;

\item[(b)] $\phi(s\cdot x)=s\cdot\phi(x)$ pour tout $x\in\pi$.
\end{itemize}
\vv

Soit $v\in \pi^{I_1}$ un vecteur non nul tel que $\phi(v)\in\pi'^{I_1}$, dont l'existence est assur\'ee par le lemme \ref{lemma-Pa->pi'}. D'apr\`es le th\'eor\`eme \ref{theorem-irr}, $\pi|_{P^+}$ est irr\'eductible, donc $\phi(v)\neq 0$ et qu'il existe $Q\in\bFp[P^+]$ tel que $x=Q\cdot v$.
Si $g\in I$, on \'ecrit
\begin{equation}\label{equation-gQ}
gQ=\summ_{i}c_iQ_i'g_i'\end{equation}
avce $c_i\in \bFp^{\times}$, $Q_i'\in P^+Z$ et $g_i'\in I_1$. Ceci est toujours possible gr\^ace au lemme \ref{lemma-IP=PI}.
On a alors:
\[\begin{array}{rlll}
\phi(g\cdot x)=\phi(gQ\cdot v)&=&\phi(\summ_ic_i Q_i'g_i'\cdot v)& \mathrm{par\ }(\ref{equation-gQ})\\
&=&\summ_ic_iQ_i'\cdot\phi(v)& \mathrm{car\ }v\in\pi^{I_1} \mathrm{\ et\ }\phi \mathrm{\ est\ } P^+Z\textrm{-\'equivariant}\\
&=&\summ_ic_iQ_i'g_i'\cdot \phi(v)&\mathrm{car\ }\phi(v)\in\pi'^{I_1}\\
&=&gQ\cdot\phi(v)&\mathrm{par\ }(\ref{equation-gQ})\\
&=&g\cdot\phi(x)&\mathrm{par\ }P^+\textrm{-\'equivariance\ de\ } \phi
\end{array}\]
d'o\`u l'\'enonc\'e  (a).

D'apr\`es le th\'eor\`eme \ref{theorem-main}(i), pour tout $x\in \pi$, il existe un entier $m\gg 0$ tel que $S^mx=0$. On d\'efinit $d(x)$ comme le plus petit entier v\'erifiant cette propri\'et\'e. On va montrer (b) par r\'ecurrence sur $d(x)$.

Si $d(x)=1$, alors $Sx=0$. D'une part, on a de l'\'equation (\ref{equation-R}): $\phi(\Pi\cdot x)+\phi(R\cdot x)=0$;
comme $R\in\bFp[P^+I]$, on en d\'eduit de (a) que:
\begin{equation}\nonumber\label{equation-sum=0a}\phi(\Pi\cdot x)+R\cdot\phi(x)=0.\end{equation}
D'autre part, comme $\phi$ est $P^+$-\'equivariant, $S(\phi(x))=\phi(Sx)=0$, donc par (\ref{equation-R})
\begin{equation}\nonumber\label{equation-sum=0b}
\Pi\cdot \phi(x)+R\cdot\phi(x)=0.\end{equation}
On en d\'eduit donc 
l'\'egalit\'e \[\phi(\Pi\cdot x)=\Pi\cdot\phi(x),\] et en lui appliquant la matrice $\smatr{\varpi}001$ et en utilisant la $P^+Z$-\'equivariance de $\phi$, on voit que
\[\chi(\varpi)\phi(s\cdot x)=\chi(\varpi)(s\cdot\phi(x)),\]
où $\chi$ d\'esigne le caract\`ere central commun de $\pi$ et de $\pi'$.
L'\'enonc\'e (b) s'en d\'eduit dans ce cas particulier.

Par hypoth\`ese de r\'ecurrence, on a $\phi(s\cdot y)=s\cdot \phi(y)$ pour tout vecteur $y\in\pi$ v\'erifiant $d(y)\leq d$. Soit maintenant $x\in\pi$ un vecteur v\'erifiant $d(x)=d+1$. Comme $d(Sx)=d$, on a
\[\phi(s\cdot Sx)=s\cdot\phi(Sx)=s\cdot S(\phi(x)),\]
ce qui \'equivaut \`a dire que (par (\ref{equation-R})):
\[\phi(\Pi\cdot x)+\phi(R\cdot x)=\Pi\cdot\phi(x)+R\cdot\phi(x).\]
 Le m\^eme raisonnement que ci-dessus donne $\phi(s\cdot x)=s\cdot\phi(x)$.
Cela montre (b) et ach\`eve la d\'emonstration.
\end{proof}


Comme on l'a fait remarquer dans l'introduction, le th\'eor\`eme \ref{theorem-morph} reste vrai pour tout corps $F$. Voici la d\'emonstration rapide de Pa\v{s}k\={u}nas:
\begin{proof}[D\'emonstration du th\'eor\`eme \ref{theorem-morph} pour tout $F$] Posons \[t=s\Pi=\matr{\varpi}001\in G.\]
Notons comme d'habitude $\bFp[t]$ l'anneau des polyn\^omes en $t$ et $\bFp[t,t^{-1}]$ la localisation de  $\bFp[t]$ de telle sorte qu'on ait l'isomorphisme naturel:
\[\bFp[P^+]\otimes_{\bFp[t]}\bFp[t,t^{-1}]\cong \bFp[P]\]
puis l'isomorphisme
\[I^+(\pi)\otimes_{\bFp[t]}\bFp[t,t^{-1}]\simto \pi\]
o\`u la surjectivit\'e r\'esulte de l'irr\'eductibilit\'e de $\pi$ en tant que $P$-repr\'esentation (\cite[th\'eor\`eme 4.3]{Pa3}).
Ce dernier induit un isomorphisme: 
\begin{equation}\label{equation-Pas}
\Hom_{P}(\pi,\pi')\simto \Hom_{P^+Z}(I^+(\pi),\pi'),\ \ f\mapsto f|_{I^+(\pi)}.\end{equation}

D'autre part, comme $t$ est nilpotent sur la $P^+Z$-repr\'esentation $\pi/I^+(\pi)$ (pour le voir, on utilise le fait que $\pi|_{P}$ est irr\'eductible) et comme $t$ est inversible sur la $P$-repr\'esentation $\pi'$, on a forc\'ement
$\Hom_{P^+Z}(\pi/I^+(\pi),\pi')=0$
et donc on obtient une injection:
\begin{equation}\label{equation-Pas-inj}\Hom_{P^+Z}(\pi,\pi')\hookrightarrow \Hom_{P^+Z}(I^+(\pi),\pi'),\ \ f\mapsto f|_{I^+(\pi)}.\end{equation}
De (\ref{equation-Pas}) et de (\ref{equation-Pas-inj}), on d\'eduit que le morphisme naturel:
\[\Hom_{P}(\pi,\pi')\ra\Hom_{P^+Z}(\pi,\pi')\]
\`a travers lequel (\ref{equation-Pas}) se factorise, est en fait un isomorphisme, et l'isomorphisme cherch\'e s'obtient en le composant avec celui de \cite[th\'eor\`eme 4.4]{Pa3}.
\end{proof}

\subsubsection{D\'emonstration du corollaire \ref{corollaire-deuxcas=}}
\label{subsubsection-coro-cas(ii)}

\begin{proof}[D\'emonstration de \ref{corollaire-deuxcas=} cas (ii)]
Soient $\pi_1$ une sous-repr\'esentation irr\'eductible de $G$ et $\pi_2$ le quotient de $\pi$ par $\pi_1$. Comme  dans la preuve du cas (i), on  a $\dim_{\bFp}(D_1(\sigma,\pi))\geq \dim_{\bFp} D_1(\pi_1)+\dim_{\bFp}D_1(\sigma,\pi_2)$, et pour conclure il suffit de montrer que $\pi_2$ est admissible, ou encore $\pi_2^{I_1}$ est de dimension finie sur $\bFp$.
Comme $D_1(\pi_1)$ est de dimension finie, le th\'eor\`eme \ref{theorem-main}(ii) assure que $\pi_1$ est non supersinguli\`ere.


On suppose par l'absurde que $\pi_2^{I_1}$ est de dimension infinie. 
Comme $D_1(\sigma,\pi_2)$ est de dimension finie,  on peut trouver un vecteur $\overline{x}$ de $\pi_2$ tel que $\overline{x}\in\pi_2^{I_1}$ mais $\overline{x}\notin D_1(\sigma,\pi_2)$. \'Ecrivons  $\overline{x}=\overline{x}^++\overline{x}^-$ avec $\overline{x}^+\in I^+(\sigma,\pi_2)$ et $\overline{x}^-\in I^-(\sigma,\pi_2)$. Quitte \`a remplacer $\overline{x}$ par $\Pi(\overline{x})$, on peut supposer que $\ell_{\sigma}(\overline{x}^+)\geq \ell_{\sigma}(\overline{x}^-)$. Puisque $\overline{x}\notin D_1(\sigma,\pi_2)$, cela fait que $\ell_{\sigma}(\overline{x}^+)\geq 1$, donc le lemme  \ref{lemma-niveau} entra\^ine que $S^n\overline{x}\neq 0$ pour tout $n\geq 0$ et que $S^n\overline{x}\in I^+(\sigma,\pi_2)$ si $n$ est suffisamment grand. Comme $S^n\overline{x}$ est   fix\'e par $I_1$  et n'appartient pas \`a $D_1(\sigma,\pi_2)$ pour tout $n\geq 0$, on peut supposer que
 $\overline{x}\in I^+(\sigma,\pi_2)^{I_1}$ mais $\overline{x}\notin D_1(\sigma,\pi_2)$ en rempla\c{c}ant eventuellement $\overline{x}$ par $S^n\overline{x}$ avec $n\gg0$. 

Soit $x\in \pi$ un rel\`evement de $\overline{x}$ et consid\'erons $\langle I_1\cap U^+\cdot x\rangle$ la sous-repr\'esentation de $\pi$ qu'il engendre.  Puisque $\overline{x}$ est fix\'e par $I_1\cap U^+$, le radical de $\langle I_1\cap U^+\cdot x\rangle$ est contenu dans $\pi_1$ par le lemme \ref{lemma-Alperin}.  On va distinguer deux cas.

Premier cas: $\pi_1$ est un caract\`ere isomorphe \`a $\chi\circ\det$ avec $\chi$ un caract\`ere lisse de $F^{\times}$.  Soit $v_0\in\pi_1$ un vecteur non nul de sorte que $\pi_1=\bFp v_0$.  On a alors dans $\pi_1$:
\[Sv_0=\summ_{\lambda\in\F_q}\matr{\varpi}{[\lambda]}01v_0=q\chi(\varpi)v_0=0, \]
ce qui fait que $Sx$ est fix\'e par $I_1\cap U^+$ d'apr\`es le lemme \ref{lemma-W+}. Ensuite, l'admissibilit\'e de $\pi$ implique que $Sx\in D_1(\sigma,\pi)$ par la proposition \ref{prop-I^+-in-D_1-deuxcas}, puis $x\in D_1(\sigma,\pi)$ par le lemme \ref{lemma-niveau}(ii). Enfin, la  proposition \ref{prop-exact-complexe} montre que $\overline{x}\in D_1(\sigma,\pi_2)$, ce qui donne une contradiction avec le choix de $\overline{x}$.

Deuxi\`eme cas: $\pi_1$ est une s\'erie sp\'eciale ou une s\'erie principale. D'apr\`es \cite[th\'eor\`eme 33]{BL2}, $\pi_1$ admet toujours une sous-$KZ$-repr\'esentation irr\'eductible $\sigma$ de dimension $\geq 2$. Soient $v_0\in\sigma$ un vecteur non nul fix\'e par $I_1$ et $\langle P^+\cdot v_0\rangle$ le sous-espace vectoriel de $\pi_1$ engendr\'e par $v_0$.  Par les lemmes \ref{lemma-W+} et \ref{lemma-Sm-in-P+v0} (ci-apr\`es) et quitte \`a remplacer $ \overline{x}$  par $S^m\overline{x}$  avec $m\gg 0$, on peut supposer que le radical de $\langle I_1\cap U^+\cdot x\rangle$ est contenu dans $\langle P^+\cdot v_0\rangle$.  Ainsi, on obtient une suite exacte de $I_1\cap U^+$-repr\'esentations:
\[0\ra \langle P^+\cdot v_0\rangle \ra \langle P^+\cdot v_0\rangle\oplus\bFp x\ra \bFp\overline{x}\ra0.\]
Or, d'apr\`es le lemme \ref{lemma-P+v=inj}, $\langle P^+\cdot v_0\rangle$ est un objet injectif de la cat\'egorie $\Rep_{I_1\cap U^+}$, la suite s'y scinde et il existe donc un vecteur $v'\in I^+(\pi_1)$ tel que $v-v'$ soit fix\'e par $I_1\cap U^+$. On conclut alors comme dans le premier cas par l'admissibilit\'e de $\pi$.
\end{proof}


\begin{lem}\label{lemma-Sm-in-P+v0}
Conservons les notations de la d\'emonstration pr\'ec\'edente du deuxi\`eme cas. 
 Pour tout $x\in\pi_1$, il existe un entier $m\gg0$ tel que $S^mx\in\langle P^+\cdot v_0\rangle$.
\end{lem}
\begin{proof}
 Tout d'abord, l'espace $I^+(\pi_1)$ \'etant stable par $S$, le lemme \ref{lemma-niveau}(iii) implique que $S^mx\in I^+(\pi_1)$ pour $m\gg0$ et on peut donc supposer  $x\in I^+(\pi_1)$. Or, on a $I^+(\pi_1)=\langle P^+\cdot\Pi(v_0)\rangle$  d'apr\`es le lemme \ref{lemma-I+(sigma)=P+Pi(v)}, ce qui permet de conclure si $\pi$ est une s\'erie sp\'eciale parce que $\pi_1^{I_1}$ est de dimension 1 dans ce cas et donc  $\Pi(v_0)\in\bFp v_0$ (voir la preuve du th\'eor\`eme \ref{theorem-non-super}).

Supposons  que $\pi_1$ est une s\'erie principale. Par ce qui pr\'ec\`ede, on peut supposer que $x=g\Pi(v_0)$ avec $g\in P^+$. Supposons d'abord $g=1$ et montrons que $m=1$ convient. En effet, on a (o\`u $\chi_{\pi_1}$ d\'esigne le caract\`ere central de $\pi$) : 
\[S\Pi(v_0)=\chi_{\pi_1}(\varpi)\sum_{\lambda\in\F_q}\matr{[\lambda]}110v_0\in\sigma,\]
et que $S\Pi(v_0)$ est fix\'e par $I_1$ puisque $\Pi(v_0)$ l'est. Or,  $\sigma^{I_1}$ est de dimension 1 sur $\bFp$ dont $v_0$ est un vecteur de base, 
d'o\`u  $S\Pi(v_0)\in\bFp v_0$.  

Supposons $g\neq 1$ et  \'ecrivons $g=\smatr{\varpi^na}{b}01$ avec $a\in\cO^{\times}$, $b\in\cO$ et $n\geq 0$. On raisonne par r\'ecurrence sur $n$.
Si $n=0$, alors  $g\in I$ puis $g\Pi(v_0)\in\bFp\Pi(v_0)$, et le r\'esultat se d\'eduit du cas o\`u $g=1$.
Si $n\geq 1$,  on a
\[g\Pi(v_0)=\matr{\varpi^na}b01\matr01{\varpi}0v_0=\matr{\varpi}00{\varpi}\matr{\varpi^{n-1}a}{b}01\matr0110v_0.\]
Par ailleurs, comme la $K$-repr\'esentation $\pi_1^{K_1}$  est isomorphe \`a une s\'erie principale de $K/K_1\cong\GL_2(\F_q)$ contenant $\sigma$ dans son socle,  elle est engendr\'ee par $\Pi(v_0)$ comme $K$-repr\'esentation (voir \cite[\S2]{BP}). Par cons\'equent, on obtient de la d\'ecomposition (\ref{equation-decom-K/I})
\[\matr0110v_0\in\sigma\subseteq\langle K\cdot \Pi(v_0)\rangle\subseteq \bFp\Pi(v_0)+ \langle P^+\cdot v_0\rangle.\]
Le r\'esultat s'en d\'eduit facilement par hypoth\`ese de r\'ecurrence.
\end{proof}

\hspace{5cm} \hrulefill\hspace{5.5cm}\vv

D\'epartement de Math\'ematiques, B\^atiment 425,  Universit\'e de
Paris-Sud, 91405 Orsay Cedex, France

E-mail: yongquan.hu@math.u-psud.fr

\end{document}